\numberwithin{equation}{section} 
\theoremstyle{plain}
\newtheorem{thm}{Theorem}[section]
\newtheorem{lem}[thm]{Lemma}
\newtheorem{pro}[thm]{Proposition}
\newtheorem{de}[thm]{Definition}
\newtheorem{rem}[thm]{Remark}
\def\R {{\Bbb R}}
\def\N {{\Bbb N}}
\def\M {{\mathcal M}}
\def\A {{\mathcal A}}
\begin{document}
\baselineskip 14pt
\title{Dimension estimates for $C^1$ iterated function systems and  repellers. Part I}

\author{De-Jun Feng}
\address{Department of Mathematics\\ The Chinese University of Hong Kong\\ Shatin,  Hong Kong\\ }
\email{djfeng@math.cuhk.edu.hk}

\author{K\'aroly Simon}
\address[K\'aroly Simon]{Budapest University of Technology and Economics, Department of Stochastics, Institute of Mathematics and
MTA-BME Stochastics Research Group,
 1521 Budapest, P.O.Box 91, Hungary} \email{simonk@math.bme.hu}

\thanks {
2000 {\it Mathematics Subject Classification}: 28A80, 37C45}

\date{}

\begin{abstract}
This is the first article in a two-part series containing some results on dimension estimates for $C^1$ iterated function systems and  repellers. In this part, we prove that the
upper box-counting dimension of the attractor of any $C^1$ iterated function system (IFS) on ${\Bbb R}^d$ is bounded above by its singularity dimension, and the upper packing dimension of any ergodic invariant measure associated with this IFS is bounded above by its Lyapunov dimension. Similar results are obtained for the repellers for $C^1$ expanding maps on Riemannian manifolds.
\end{abstract}

\maketitle
\section{Introduction}\label{S-1}

The dimension theory of iterated function systems (IFS) and dynamical repellers has developed into an important field of research during the last 40 years.  One of the main objectives is to estimate variant notions of dimension of the involved invariant sets and measures.
Despite many new and significant developments in recent years, only the cases of conformal repellers and attractors of conformal IFSs under certain separation condition have been completely understood. In such cases, the topological pressure plays a crucial role in the theory. Indeed, the Hausdorff and box-counting dimensions of the repeller $X$ for a $C^1$ conformal expanding map $f$ are given by the unique number $s$ satisfying the Bowen-Ruelle formula $P(X, f, -s\log \|D_xf\|)=0$, where the functional $P$ is the topological pressure, see \cite{Bowen1979,Ruelle1982,GatzourasPeres1997}. A similar formula is obtained for the Hausdorff and box-counting dimensions of the attractor of a conformal IFS satisfying the open set condition (see e.g. \cite{Patzschke1997}).

The study of dimension in the non-conformal cases has proved to be much more difficult.  In his seminal paper \cite{Falconer88},  Falconer established a general upper bound on the Hausdorff  and box-counting dimensions of a self-affine set (which is the attractor of an IFS consisting of contracting affine maps) in terms of the so-called affinity dimension, and proved that for typical self-affine sets under a mild assumption this upper bound is equal to the dimension. So far substantial progresses have been made towards understanding when the  Hausdorff  and box-counting dimensions of a concrete planar self-affine set are equal to its affinity dimension; see \cite{BaranyHochmanRapaport2019,HochmanRapaport2019} and the references therein. However very little has been known in the higher dimensional case.

In \cite[Theorem 5.3]{Falconer94}, by developing a sub-additive version of the thermodynamical formalism, Falconer showed that the upper box-counting dimension of a mixing repeller $\Lambda$
of a non-conformal $C^2$ mapping $\psi$, under the following  distortion condition
\begin{equation}
\label{e-bunch}
\|(D_x\psi)^{-1}\|^2 \|D_x\psi\|<1 \quad \mbox{ for }x\in \Lambda,
\end{equation}
is bounded
above by the zero point of the (sub-additive) topological pressure associated with the singular value functions of the derivatives of iterates of $\psi$.  We write this zero point as $\dim_{S^*}\Lambda$ and call it the {\em singularity dimension of $\Lambda$} (see Definition \ref{de-6.1} for the detail). The condition \eqref{e-bunch} is used to prove the bounded distortion property of the singular value functions, which enables one to control the distortion of balls after many iterations (see \cite[Lemma 5.2]{Falconer88}).
Examples involved ``triangular maps'' were constructed in \cite{ManningSimon07} to show that  the condition \eqref{e-bunch} is necessary for this bounded distortion property.

Using a quite different approach,  Zhang \cite{Zhang97} proved that the Hausdorff dimension of the repeller of an arbitrary $C^1$ expanding map $\psi$ is also bounded above by the singularity dimension.  (We remark that the upper bound given by Zhang was defined in a slightly different way, but it is equal to the singularity dimension; see \cite[Corollary 2]{BanCaoHu10} for a proof.) The basic technique used in Zhang's proof is to estimate the Hausdorff measure of $\psi(A)$ for small sets $A$ (see \cite[Lemma 3]{Zhang97}), which is applied to only one iteration so it  avoids assuming any distortion condition.  However his method does not apply  to  the box-counting dimension.

Thanks to the results of Falconer and Zhang, it arises a natural question whether the upper box-counting dimension of a $C^1$  repeller is always bounded above by its singularity dimension.  The challenge here is the lack of valid tools to analyse the geometry of  the images of balls under  a large number of iterations of $C^1$ maps.   In \cite[Theorem 3]{Barreira03}, Barreira made a positive claim to this question, but  his proof  contains a crucial mistake \footnote{This result was cited/applied in several papers (e.g., \cite[Corollary 4]{BanCaoHu10}, \cite[Theorems 4.4-4.7]{ChenPesin2010}) without noticing the  mistake in \cite{Barreira03}.},   as found by Manning and Simon \cite{ManningSimon07}. In a recent paper \cite[Theorem 3.2]{CaoPesinZhao2019}, Cao, Pesin and Zhao obtained an upper bound for the upper box-counting dimension of the repellers of  $C^{1+\alpha}$ maps satisfying certain dominated splitting property. However that upper bound depends on the involved splitting and  is usually strictly larger than the singularity dimension.

In the present paper, we give an affirmative answer to the above question. We also establish an analogous  result for the attractors of $C^1$ non-conformal IFSs. Meanwhile we prove that  the upper packing dimension of an ergodic invariant measure supported on a $C^1$ repeller (resp. the projection of an ergodic measure on the attractor of a $C^1$  IFS) is bound above by its Lyapunov dimension.

In the continuation of this paper \cite{FengSimon2020} we verify that upper bound estimates of the dimensions of the attractors and ergodic measures in the previous paragraph, give the exact values of the dimensions  for some families of $C^2$ non-conformal IFSs on the plane at least typically. Typically means that the assertions hold for almost all translations of the system. These families include the
 $C^2$ non-conformal IFSs on the plane for which all differentials are either diagonal matrices, or  all  differentials are lower triangular matrices and the contraction in $y$ direction is stronger than in $x$ direction.

We first state our results for $C^1$ IFSs. To this end, let us introduce some notation and definitions. Let $Z$ be a compact subset of $\R^d$. A finite family $\{f_i\}_{i=1}^\ell$ of contracting self maps on $Z$ is called a {\it $C^1$ iterated function system} (IFS), if there exists an open set $U\supset Z$ such that  each $f_i$ extends to a contracting $C^1$-diffeomorpism $f_i:\; U\to f_i(U)\subset U$.
Let $K$ be the attractor of the IFS, that is,  $K$ is the unique non-empty compact subset of ${\Bbb R}^d$ such that
\begin{equation}
\label{e-IFS}
K=\bigcup_{i=1}^\ell f_i(K)
\end{equation}
(cf.~\cite{Falconer2003}).

Let $(\Sigma,\sigma)$ be the one-sided full shift over the alphabet $\{1,\ldots, \ell\}$ (cf.~\cite{Bowen08}).    Let $\Pi: \Sigma\to K$ denote the canonical coding map associated with the IFS $\{f_i\}_{i=1}^\ell$. That is,
\begin{equation}
\label{e-coding}
\Pi(x)=\lim_{n\to \infty} f_{x_1}\circ \cdots\circ f_{x_n}(z), \qquad x=(x_n)_{n=1}^\infty,
\end{equation}
with $z\in U$. The definition of $\Pi$ is independent of the choice of $z$.

For any  compact subset $X$ of $\Sigma$ with $\sigma X\subset X$, we call $(X,\sigma)$ a {\it one-sided subshift} or simply {\it subshift} over $\{1,\ldots, \ell\}$ and let $\dim_SX$ denote the singularity dimension of $X$ with respect to the IFS $\{f_i\}_{i=1}^\ell$ (cf. Definition \ref{de-1}).

For a set $E\subset \R^d$, let $\overline{\dim}_BE$ denote the upper box-counting dimension of $E$ (cf.~\cite{Falconer2003}).  The first result in this paper is the following,  stating that  the upper box-counting dimension of $\Pi(X)$ is bounded above by the singularity dimension of $X$.

\begin{thm}
\label{thm-1}
Let $X\subset \Sigma$ be compact and $\sigma X\subset X$. Then $\overline{\dim}_B\Pi(X)\leq \dim_SX$. In particular,
$$\overline{\dim}_BK\leq \dim_S\Sigma.$$
\end{thm}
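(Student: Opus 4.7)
The goal is to show that, for every $s > \dim_S X$, one has $N(\Pi(X),\delta) \leq C_s\,\delta^{-s}$ up to subexponential corrections in $\log(1/\delta)$, where $N(E,\delta)$ denotes the minimum number of closed balls of radius $\delta$ needed to cover $E$; letting $s\downarrow \dim_S X$ then gives $\overline{\dim}_B\Pi(X)\leq \dim_S X$. The starting ingredient is the defining property of $\dim_S X$: for any $s > \dim_S X$, the sum $\sum_{\mathbf{i}\in X_n} \varphi^s(D_x f_{\mathbf{i}})$ decays exponentially in $n$, where $\varphi^s$ is the singular value function of order $s$ and $f_{\mathbf{i}}=f_{i_1}\circ\cdots\circ f_{i_n}$. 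All the work is in translating this exponential decay into a $\delta^{-s}$ covering bound despite the absence of any bounded distortion hypothesis like \eqref{e-bunch}.

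The first tool I would establish is a single-step covering lemma for the finite alphabet $\{f_i\}$: there exist $\eta_0>0$ and $C>0$ such that for each $i$, each $x\in U$, each $\eta\in(0,\eta_0]$, each $A\subset B(x,\eta)$ and each admissible target scale $\delta$,
\[
N(f_i(A),\delta) \;\leq\; C\cdot\varphi^s\!\bigl(D_x f_i\bigr)\cdot(\eta/\delta)^{s} + C.
\]
This reduces to the statement that, on scale $\eta\leq\eta_0$ (chosen relative to the \emph{uniform} modulus of continuity of the finitely many $Df_j$'s), $f_i(A)$ is trapped in a $(1+o(1))$-enlargement of the affine image $f_i(x)+D_xf_i(A-x)$, which is an ellipsoid-like set whose $\delta$-covering number is computed directly from the singular values of $D_xf_i$. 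The finiteness of the alphabet is essential here; it is what lets us extract a single scale $\eta_0$ on which every $f_i$ is near-linear.

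The plan is then to cover $\Pi(X)\subset \bigcup_{\mathbf{i}\in X_n} f_{\mathbf{i}}(K)$ by iterating the one-step lemma outward through $f_{\mathbf{i}}=f_{i_1}\circ\cdots\circ f_{i_n}$ on a suitable decreasing sequence of scales $\eta_0\geq \eta_1\geq\cdots\geq\eta_n=\delta$, arranged so that the scale ratios telescope to $(\eta_0/\delta)^s$. Summing over $\mathbf{i}\in X_n$ with $n=n(\delta)$ chosen to balance the two sides, one hopes to derive
\[
N(\Pi(X),\delta)\;\lesssim\;\sum_{\mathbf{i}\in X_n} \varphi^s\!\bigl(D_{\cdot}f_{\mathbf{i}}\bigr)\cdot (\eta_0/\delta)^{s},
\]
from which the exponential decay of the pressure sum yields the required $O(\delta^{-s})$ bound.

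The main obstacle, and what I expect to be the real technical heart of the paper, is that the naive outward iteration produces $\prod_{k=1}^n \varphi^s(Df_{i_k})$ rather than $\varphi^s(Df_{\mathbf{i}})$, and by sub-multiplicativity of singular value functions this is in general \emph{much larger}, so the naive argument loses too much and will not reach the singularity dimension. To recover the correct $\varphi^s(Df_{\mathbf{i}})$ one must replace round ball covers by \emph{ellipsoidal} covers that propagate the genuine shape of the cocycle $D f_{\mathbf{i}}$ through the composition, so that each single step becomes an ellipsoid-to-ellipsoid transformation whose cumulative counting quantity tracks $Df_{\mathbf{i}}$ itself rather than the product of per-step $\varphi^s$'s. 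Designing this adaptive, shape-tracking cover in such a way that the per-step approximation errors (arising from $C^1$ rather than $C^{1+\alpha}$ regularity) accumulate only subexponentially, without invoking any bounded-distortion condition, is the crux of the argument.
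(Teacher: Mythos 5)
Your opening moves match the paper: a one-step $C^1$ covering lemma (the paper's Lemma \ref{lem-1}), applied recursively to cover $f_{\mathbf{i}}(K)$ by balls whose count and radius are governed by singular values of one-step differentials (Proposition \ref{pro-2}), followed by a stopping-time/pressure argument to turn the cover into a box-dimension bound (Propositions \ref{pro-2.4}, \ref{pro-4}). You also correctly identify the obstacle: the recursion gives $\prod_k \phi^s(Df_{i_k})$, which by submultiplicativity can be much larger than $\phi^s(Df_{\mathbf{i}})$, so the naive iteration only reaches the zero of an \emph{additive} pressure. But your proposed remedy — adaptive ellipsoidal covers that propagate the shape of the cocycle through the composition — is precisely Falconer's bounded-distortion strategy, and the whole content of the $C^1$ problem is that this strategy fails without the bunching hypothesis \eqref{e-bunch}. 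In $C^1$ (and indeed even in $C^2$, by the triangular-map examples of \cite{ManningSimon07}) there is no quantitative bound on how the modulus of continuity of $Df$ degrades along an increasingly eccentric ellipsoid, so the ``subexponential accumulation of per-step errors'' you are asking for is simply not available; this is essentially the gap in Barreira's argument \cite{Barreira03}.

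The paper's actual fix lives at a different level and is the idea your proposal lacks. It accepts the multiplicative overcount, but runs the recursion with the $n$-block iterated IFS $\{f_I : I \in X_n^*\}$ as the atomic step, so the per-step quantity already involves $\phi^k(Df_I)$ with $|I|=n$. This yields, for each $n$, an upper bound $s_n$ on $\overline{\dim}_B\Pi(X)$ defined by the vanishing of an \emph{additive} pressure for the block system $(X^{(n)},\sigma^n)$ (Proposition \ref{pro-upper}). The submultiplicativity gap is then closed not by cover design but by thermodynamic formalism: Proposition \ref{pro-1} shows $P(X,\sigma,\mathcal{G}^s)=\inf_n \tfrac{1}{n}P\bigl(X^{(n)},\sigma^n,g_n^s\bigr)$ (whose proof requires some care — one must show the limiting measure lives on $X$, using the wandering-set argument borrowed from \cite{KenyonPeres96}), and combined with the Lipschitz estimate $|P(\cdot,f)-P(\cdot,f')|\le\|f-f'\|_\infty$ this forces $\liminf_n s_n\le\dim_S X$. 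Without Proposition \ref{pro-1} and the supporting Proposition \ref{pro-2.4}, your plan has no way to recover the sub-additive singularity dimension from the additive per-step counts, and the argument does not close.
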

For an ergodic $\sigma$-invariant measure $m$ on $\Sigma$, let $\dim_{L}m$ denote the Lyapunov dimension of $m$ with respect to $\{f_i\}_{i=1}^\ell$ (cf. Definition \ref{de-2}).
 For a Borel probability measure $\eta$ on $\R^d$ (or a manifold), let $\overline{\dim}_P\eta$ denote the upper packing dimension of $\eta$. That is,
 $$
 \overline{\dim}_P\eta={\rm esssup}_{x\in {\rm supp}(\eta)}\overline{d}(\eta,x), \qquad \mbox{with }\overline{d}(\eta, x):=\limsup_{r\to 0}\frac{\log \eta(B(x,r))}{\log r},
 $$
 where $B(x,r)$ denotes the closed ball  centered at $x$ of radius $r$. Equivalently,
 $$\overline{\dim}_P\eta=\inf\{ \dim_PF:\; \mbox{ $F$ is a Borel set with } \eta(F)=1\},
 $$
 where $\dim_PF$ stands for  the packing dimension of $F$ (cf.~\cite{Falconer2003}).
 See  e.g.~\cite{FanLauRao02} for a proof.

 Our second result is the following, which can be viewed as a measure analogue of Theorem \ref{thm-1}.

\begin{thm}
\label{thm-2}
 Let $m$ be an ergodic $\sigma$-invariant measure on $\Sigma$, then
$$\overline{\dim}_P(m\circ \Pi^{-1})\leq \dim_{L}m,$$
where $m\circ \Pi^{-1}$ stands for the push-forward of $m$ by  $\Pi$.
\end{thm}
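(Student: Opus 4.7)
The plan is to reduce Theorem \ref{thm-2} to Theorem \ref{thm-1} by constructing, for each $\epsilon > 0$, a compact $\sigma$-invariant subshift $X_\epsilon \subset \Sigma$ of $m$-measure close to $1$ with $\dim_S X_\epsilon \leq \dim_L m + O(\epsilon)$, then exploiting the second characterization $\overline{\dim}_P \eta = \inf\{\dim_P F : \eta(F)=1\}$ recalled just before the theorem statement.

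The first step is to invoke the Shannon--McMillan--Breiman theorem together with the Kingman subadditive ergodic theorem applied to the singular value cocycles $n \mapsto \log \alpha_i(D_{\Pi(\sigma^n x)} f_{x|_n})$, $i=1,\dots,d$, and to use Egorov's theorem to produce a compact set $E_\epsilon \subset \Sigma$ with $m(E_\epsilon) \geq 1-\epsilon$ and an integer $N_0$ such that, uniformly in $x \in E_\epsilon$ and $n \geq N_0$,
$$m([x|_n]) \leq e^{-n(h_m(\sigma)-\epsilon)}, \qquad e^{n(\lambda_i - \epsilon)} \leq \alpha_i(D_{\Pi(\sigma^n x)} f_{x|_n}) \leq e^{n(\lambda_i + \epsilon)},$$
where $\lambda_1 \geq \cdots \geq \lambda_d$ are the Lyapunov exponents of $m$ with respect to the IFS.

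From $E_\epsilon$ I would manufacture $X_\epsilon$ as a subshift whose language consists of (subwords of) ``$m$-typical'' finite words---for instance, the subshift obtained by forbidding those finite words of length $L(\epsilon)$ that do not appear as subwords of elements of $E_\epsilon$. The first bound above translates into a topological entropy bound $h_{\mathrm{top}}(X_\epsilon) \leq h_m(\sigma) + O(\epsilon)$, while the second yields uniform control of the singular value function $\phi^s$ along $X_\epsilon$; combining these two forces the sub-additive singular value pressure to satisfy $P_{X_\epsilon}(\dim_L m + O(\epsilon)) \leq 0$, i.e., $\dim_S X_\epsilon \leq \dim_L m + O(\epsilon)$. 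Applying Theorem \ref{thm-1} to $X_\epsilon$ then gives $\dim_P \Pi(X_\epsilon) \leq \overline{\dim}_B \Pi(X_\epsilon) \leq \dim_L m + O(\epsilon)$, and passing to a sequence $\epsilon_k = 1/k$ and setting $F = \bigcap_N \bigcup_{k \geq N} \Pi(X_{\epsilon_k})$ produces a Borel set with $(m\circ \Pi^{-1})(F)=1$ (by continuity of measure applied to $m(X_{\epsilon_k}) \to 1$) and $\dim_P F \leq \dim_L m$ (by countable stability of packing dimension), whence the infimum characterization delivers $\overline{\dim}_P(m\circ \Pi^{-1}) \leq \dim_L m$.

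The main obstacle is the construction of $X_\epsilon$: the condition $m(X_\epsilon) \to 1$ forces $X_\epsilon$ to contain $m$-typical points together with their full $\sigma$-orbits, whereas the uniform SMB/Kingman bounds are fragile under shifting and under passage to the orbit closure, so one must carefully trim the allowed language to keep $h_{\mathrm{top}}(X_\epsilon)$ close to $h_m(\sigma)$ without losing too much measure. If this symbolic reduction turns out too delicate, a direct alternative is to prove the pointwise bound $\overline{d}(m\circ\Pi^{-1}, \Pi(x)) \leq \dim_L m$ for $m$-a.e.\ $x$ by a stopping-time decomposition of $\Pi^{-1}(B(\Pi(x),r))\cap E_\epsilon$ into disjoint cylinders $[y|_{n(y)}]$ whose $\Pi$-images lie in $B(\Pi(x),2r)$, and to invoke the covering/counting machinery from the proof of Theorem \ref{thm-1}---which is tailored precisely to bound images of cylinders under $C^1$ compositions without any bounded distortion assumption---in a form localized to a ball around $\Pi(x)$.
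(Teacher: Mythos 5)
Your main route---reducing Theorem~\ref{thm-2} to Theorem~\ref{thm-1} by manufacturing a compact forward $\sigma$-invariant $X_\epsilon\subset\Sigma$ with $m(X_\epsilon)$ close to $1$ and $\dim_S X_\epsilon\le\dim_L m+O(\epsilon)$---cannot be carried out, and the obstruction is structural rather than technical. First, since $m$ is ergodic and $\sigma X_\epsilon\subset X_\epsilon$ implies $X_\epsilon\subset\sigma^{-1}X_\epsilon$, the set $\bigcap_{n\ge 0}\sigma^{-n}X_\epsilon$ is fully $\sigma$-invariant and has the same $m$-measure as $X_\epsilon$; by ergodicity $m(X_\epsilon)\in\{0,1\}$, so ``close to $1$'' means exactly $1$, and then the compact set $X_\epsilon$ contains $\mathrm{supp}(m)$. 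Consequently $\dim_S X_\epsilon\ge\dim_S(\mathrm{supp}\,m)$, and the latter generically is much larger than $\dim_L m$. Your concrete recipe makes this stark: if one forbids exactly the length-$L$ words that never occur as subwords of points of the Egorov set $E_\epsilon$, then for any fully supported ergodic $m$ (e.g.\ Bernoulli $(p,1-p)$ with $p\ne1/2$ on the $2$-shift) almost every $x\in\Sigma$ contains \emph{every} finite word as a subword, so nothing is forbidden, $X_\epsilon=\Sigma$, and Theorem~\ref{thm-1} only returns $\overline{\dim}_B K\le\dim_S\Sigma$, which can be strictly larger than $\dim_L m$. There is no ``careful trimming'' that escapes this: any forward-invariant compact set of positive measure already swallows $\mathrm{supp}(m)$. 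Even setting aside the measure issue, the uniform Kingman/SMB bounds you extract on $E_\epsilon$ control $\phi^s\bigl(D_{\Pi\sigma^n x}f_{x|n}\bigr)$ only at the coding points $\Pi\sigma^n x$ with $x\in E_\epsilon$, whereas bounding $P(X_\epsilon,\sigma,\mathcal G^s)$ via~\eqref{e-Falconer} requires controlling $\sup_{x\in[I]\cap X_\epsilon}\phi^s\bigl(D_{\Pi\sigma^{|I|}x}f_I\bigr)$ over base points that move away from $E_\epsilon$; bridging these requires precisely the bounded-distortion estimate the paper is built to avoid in the $C^1$ setting.

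Your fallback sketch---proving the pointwise bound $\overline d(m\circ\Pi^{-1},\Pi x)\le\dim_L m$ a.e.\ by decomposing $\Pi^{-1}(B(\Pi x,r))$ and reusing the cylinder-covering machinery of Section~\ref{S-4}---is qualitatively the right direction, but as stated it is too vague to assess and it is not what the paper does. The paper's proof instead combines two separate lemmas: Lemma~\ref{lem-4.1}, a geometric counting estimate $\limsup_n\frac1n\log N_{u^n}(\Pi([x|n]))\le(\lambda_1+\cdots+\lambda_k)-k\lambda_{k+1}$ for a.e.\ $x$, obtained by applying Proposition~\ref{pro-2} to block IFSs $\{f_I:I\in\mathcal A^p\}$ together with the conditional-expectation form of Kingman's theorem (Lemma~\ref{lem-2.3.1}); and Lemma~\ref{lem-est}, a Borel--Cantelli argument (adapted from an unpublished note of Jordan) giving the lower bound $m\circ\Pi^{-1}(B(\Pi x,2\rho^n))\ge(1-\epsilon)^n\,m([x|n])/N_{\rho^n}(\Pi([x|n]))$ for a.e.\ $x$ and large $n$. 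Combining these with Shannon--McMillan--Breiman yields the local dimension bound directly, entirely bypassing any symbolic reduction to Theorem~\ref{thm-1}.
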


The above theorem improves a result of Jordan and Pollicott \cite[Theorem 1]{JordanPollicott08} which  states that $$\overline{\dim}_H(m\circ \Pi^{-1})\leq \dim_{L}m$$ under a slightly more general setting, where $\overline{\dim}_H(m\circ \Pi^{-1})$ stands for the upper Hausdorff dimension of $m\circ \Pi^{-1}$. Recall that the upper Hausdorff dimension of a measure is the infimum of the Hausdorff dimension of Borel sets of full measure, which is always less than or equal to the upper packing dimension of the measure.

Next we turn to the case of repellers.    Let ${\pmb M}$ be a smooth Riemannian manifold of dimension $d$ and $\psi: {\pmb M}\to {\pmb M}$ a $C^1$-map. Let $\Lambda$ be a compact subset of ${\pmb M}$ such that $\psi(\Lambda)=\Lambda$. We say that $\psi$ is {\it expanding} on $\Lambda$ and $\Lambda$ a {\it repeller} if
 \begin{itemize}
 \item[(a)] there exists $\lambda>1$ such that $\|(D_z\psi) v\|\geq \lambda\|v\|$ for all $z\in \Lambda$, $v\in T_z{\pmb M}$ (with respect to a Riemannian metric on ${\pmb M}$);
 \item[(b)] there exists an open neighborhood $V$ of $\Lambda$ such that
 $$
 \Lambda=\{z\in V:\; \psi^n(z)\in V \mbox{ for all } n\geq 0\}.
 $$
 \end{itemize}
We refer the reader to \cite[Section 20]{Pesin1997} for more details.
In what follows we always assume that $\Lambda$ is a repeller of $\psi$. Let $\dim_{S^*}\Lambda$ denote the singlular dimension of $\Lambda$ with respect to $\psi$ (see Definition \ref{de-6.1}). For an ergodic $\psi$-invariant measure $\mu$ on $\Lambda$, let $\dim_{L^*}\mu$ be the Lyapunov dimension of $\mu$ with respect to $\psi$ (see Definition \ref{de-6.2}).
Analogous to Theorems \ref{thm-1}-\ref{thm-2}, we have the following.

\begin{thm}
\label{thm-3}  Let $\Lambda$ be the repeller of  $\psi$. Then
$$\overline{\dim}_B \Lambda\leq \dim_{S^*}\Lambda.$$
\end{thm}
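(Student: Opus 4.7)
The plan is to deduce Theorem~\ref{thm-3} from Theorem~\ref{thm-1} by coding the repeller $\Lambda$ via a $C^1$ IFS built from local inverse branches of $\psi$, and then identifying the two resulting notions of singularity dimension.

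Since $\psi$ is $C^1$ and expanding on the compact repeller $\Lambda$ with expansion factor $\lambda>1$, at every $z\in \Lambda$ the differential $D_z\psi$ is invertible, so $\psi$ admits a $C^1$ inverse branch on some open neighborhood of $z$. Cover $\Lambda$ by finitely many such neighborhoods $U_1,\ldots,U_\ell$, chosen small enough that each $U_i$ lies inside a chart of $\pmb M$ and that $\psi(U_i)$ stays inside a fixed open neighborhood $V$ of $\Lambda$. The local branches $g_i:=(\psi|_{U_i})^{-1}:\psi(U_i)\to U_i$ are then $C^1$-diffeomorphisms with contraction ratio at most $1/\lambda$. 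Working in charts, I regard $\{g_i\}_{i=1}^\ell$ as a $C^1$ IFS on an open subset of $\R^d$ in the sense of Section~\ref{S-1}.

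Let $X\subset\Sigma=\{1,\ldots,\ell\}^{\N}$ denote the compact $\sigma$-invariant set of $\psi$-itineraries of $\Lambda$, namely those $(x_n)_{n\geq 1}$ for which there is $z\in\Lambda$ with $\psi^{n-1}(z)\in U_{x_n}$ for every $n\geq 1$. Using $\psi(\Lambda)=\Lambda$, a direct check shows that the canonical coding map $\Pi$ of the IFS $\{g_i\}$ satisfies $\Pi(X)=\Lambda$. Theorem~\ref{thm-1} then yields
$$
\overline{\dim}_B\Lambda \;=\; \overline{\dim}_B\Pi(X) \;\leq\; \dim_S X.
$$

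It remains to prove $\dim_S X = \dim_{S^*}\Lambda$, which I expect to be the main technical obstacle. Both quantities are defined as zero points of sub-additive topological pressures built from singular value functions: $\dim_SX$ uses the differentials $D(g_{x_1}\circ\cdots\circ g_{x_n})$ along admissible words $x\in X$, while $\dim_{S^*}\Lambda$ uses the differentials $D_z\psi^n$ along $\psi$-orbits in $\Lambda$. The chain rule identifies
$$
D(g_{x_1}\circ\cdots\circ g_{x_n})\bigl(\psi^n(\Pi(x))\bigr) \;=\; \bigl(D_{\Pi(x)}\psi^n\bigr)^{-1},
$$
so the singular values of the two matrices are reciprocals of each other in reverse order. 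A short algebraic manipulation of singular value functions, together with the semi-conjugacy $\Pi\circ\sigma=\psi\circ\Pi$ on $X$, transforms the pressure equation defining $\dim_SX$ into the one defining $\dim_{S^*}\Lambda$; the chart identifications contribute only bounded Jacobian factors that vanish in the sub-additive limit. Combined with the inequality displayed above, this yields $\overline{\dim}_B\Lambda\leq\dim_{S^*}\Lambda$.
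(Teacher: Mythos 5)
Your proposal pursues the reduction-to-IFS route that the authors acknowledge, in the closing remark of Section~\ref{S-6}, as a possible alternative when ${\pmb M}$ is an open subset of $\R^d$, but which they explicitly defer to a forthcoming survey because ``an additional effort is then required.'' The paper's actual proof does not reduce Theorem~\ref{thm-3} to Theorem~\ref{thm-1}: it redevelops the covering estimate directly on the repeller via a Markov partition (Propositions~\ref{pro-5.2}, \ref{pro-4'} and \ref{pro-upper'}) and then transfers the pressure $P(\Lambda,\psi,\mathcal G^{s})$ to $P(\Sigma_{A},\sigma,\widehat{\mathcal G}^{s})$ by combining the sub-additive variational principle (Theorem~\ref{thm:sub-additive-VP}) with the finite-to-one coding (Lemma~\ref{lem-2.3}).

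The central gap in your sketch is the assertion that the local inverse branches $g_{i}=(\psi|_{U_{i}})^{-1}$ form a $C^{1}$ IFS ``in the sense of Section~\ref{S-1}.'' That notion requires a single open set $U$ on which every map is defined with $g_{i}(U)\subset U$; but $g_{i}$ is only defined on $\psi(U_{i})$, and in general $\psi(U_{i})\not\supset\bigcup_{j}U_{j}$. What one gets naturally is a Markov, graph-directed family of partially defined branches, whereas Theorem~\ref{thm-1} and Definition~\ref{de-1} are stated for a full-branch IFS and a compact invariant subset of the full shift $\Sigma=\{1,\ldots,\ell\}^{\N}$. Making the reduction rigorous requires extending each $g_{i}$ to a common domain in a way that leaves the derivatives along $\Lambda$ unchanged and keeps the extensions contracting, and also handling a general Riemannian ${\pmb M}$ rather than an open piece of $\R^d$ -- exactly the ``additional effort'' the paper sidesteps by working with $\Sigma_A$ directly. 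Two secondary points: your chain-rule display already states $D\bigl(g_{x_{1}}\circ\cdots\circ g_{x_{n}}\bigr)\bigl(\psi^{n}(\Pi(x))\bigr)=(D_{\Pi(x)}\psi^{n})^{-1}$, so the matrices entering Definitions~\ref{de-1} and~\ref{de-6.1} coincide and no ``reciprocals in reverse order'' manipulation is involved; and the step you label ``a short algebraic manipulation'' transforming one pressure equation into the other is in fact the genuinely nontrivial passage from $(X,\sigma)$ to $(\Lambda,\psi)$, which the paper carries out via Lemma~\ref{lem-2.3} and Theorem~\ref{thm:sub-additive-VP}, not via singular-value algebra.
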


\begin{thm}
\label{thm-4} Let $\mu$ be an ergodic $\psi$-invariant measure supported on $\Lambda$. Then
$$
\overline{\dim}_P\mu\leq \dim_{L^*}\mu.
$$
\end{thm}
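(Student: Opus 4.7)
The plan is to reduce Theorem~\ref{thm-4} to its IFS counterpart, Theorem~\ref{thm-2}, by coding the repeller via a Markov partition whose inverse branches form a $C^1$ IFS, and then to match the IFS Lyapunov dimension with $\dim_{L^*}\mu$.

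\emph{Step 1 (Markov partition and inverse branches).} Since $\psi$ is $C^1$ expanding on $\Lambda$, standard constructions (e.g.\ Ruelle--Bowen, or Ma\~n\'e--Przytycki in the expanding-map setting) provide a Markov partition $\{R_1,\ldots,R_\ell\}$ of $\Lambda$ of arbitrarily small diameter. Choosing the partition fine enough that each $R_i\cup\psi(R_i)$ lies in a single chart of $\pmb M$, the inverse branch $g_i:=(\psi|_{R_i})^{-1}$ extends, in local Euclidean coordinates, to a $C^1$ contracting diffeomorphism on an open neighborhood of $\psi(R_i)$. Using charts and cutoff functions, $\{g_i\}_{i=1}^\ell$ can then be viewed as a $C^1$ IFS on a common open set $U\subset \R^d$. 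Let $A\in\{0,1\}^{\ell\times\ell}$ be given by $A_{ij}=1$ iff $R_j\subset \psi(R_i)$, and let $\Sigma_A\subset\Sigma$ be the associated subshift of finite type.

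\emph{Step 2 (lifting the measure and invoking Theorem~\ref{thm-2}).} The coding $\Pi(x)=\lim_n g_{x_1}\circ\cdots\circ g_{x_n}(z)$, defined on $\Sigma_A$, is continuous, surjective onto $\Lambda$, and satisfies $\psi\circ\Pi=\Pi\circ\sigma$. The ergodic $\psi$-invariant measure $\mu$ lifts to an ergodic $\sigma$-invariant measure $m$ on $\Sigma_A\subset\Sigma$ with $\mu=m\circ\Pi^{-1}$; regarded as a $\sigma$-invariant measure on the full shift supported in $\Sigma_A$, it is still ergodic, so Theorem~\ref{thm-2} applied to the IFS $\{g_i\}$ yields $\overline{\dim}_P\mu = \overline{\dim}_P(m\circ\Pi^{-1}) \leq \dim_L m$.

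\emph{Step 3 (matching the Lyapunov dimensions).} From $D_{g_i(y)}\psi\cdot D_yg_i=\mathrm{Id}$, the singular values of $D(g_{x_1}\circ\cdots\circ g_{x_n})$ at any point in its domain are precisely the reciprocals of those of $D_z\psi^n$ with $z=\Pi(\sigma^n x)$. By Oseledets' theorem the Lyapunov exponents of $m$ with respect to the IFS are therefore the negatives (in reverse order) of the $\psi$-Lyapunov exponents of $\mu$, while $h_m(\sigma)=h_\mu(\psi)$. Feeding these identifications into Definitions~\ref{de-2} and~\ref{de-6.2} gives $\dim_L m = \dim_{L^*}\mu$, which together with Step~2 completes the proof.

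\emph{Main obstacle.} The delicate step is Step~1: extending the inverse branches, which a priori live on disjoint chart pieces of $\pmb M$, to a genuine $C^1$ IFS on a single open subset of $\R^d$ to which Theorem~\ref{thm-2} applies, without disturbing the attractor or the coding of $\Lambda$ by $\Sigma_A$. Once this chart-and-cutoff construction is carried out carefully, Steps~2 and~3 amount to translating between the IFS and repeller dictionaries.
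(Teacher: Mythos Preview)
Your route is genuinely different from the paper's. The paper does \emph{not} reduce Theorem~\ref{thm-4} to Theorem~\ref{thm-2}; instead it re-runs the proof of Theorem~\ref{thm-2} in the repeller setting. Concretely, it lifts $\mu$ to an ergodic $m$ on $\Sigma_A$ via Lemma~\ref{lem-2.3}, proves repeller analogues of Lemmas~\ref{lem-4.1} and~\ref{lem-est} (namely Lemmas~\ref{lem-5.1'} and~\ref{lem-5.2}, using only \emph{local} inverse branches $f_{i,j}$ built in the proof of Proposition~\ref{pro-5.2}), and then repeats the Shannon--McMillan--Breiman / covering-count argument verbatim. The point is that Proposition~\ref{pro-5.2} and Lemma~\ref{lem-5.1'} need only the local inverses $f_{i,j}:\widetilde{R_j}\to\widetilde{R}_{ij}$ of $\psi$ on small neighborhoods; no global $C^1$ IFS on a single domain is ever constructed. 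The paper even flags your reduction idea in the final remark of Section~\ref{S-6}, but restricts it to the case $\pmb M\subset\R^d$ and defers the details.

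That remark points exactly at the gap in your Step~1. The definition of a $C^1$ IFS used in Theorem~\ref{thm-2} requires all maps $f_i:U\to f_i(U)\subset U$ to be contracting $C^1$ diffeomorphisms on a \emph{common} open $U\subset\R^d$. Your inverse branches $g_i=(\psi|_{R_i})^{-1}$ are naturally defined only on $\psi(R_i)$, and on a general Riemannian manifold there is no canonical way to realize all of them simultaneously as self-maps of a single Euclidean domain while keeping them injective, contracting, and $C^1$ with nondegenerate differential everywhere. ``Charts and cutoff functions'' is not enough of an argument here: cutoffs will typically destroy injectivity or the diffeomorphism property outside the original domain, and different $R_i$ may sit in different charts, so the chain-rule identification $D_{\Pi\sigma^n x}g_{x|n}=(D_{\Pi x}\psi^n)^{-1}$ (which you need in Step~3) only survives up to conjugation by chart differentials---harmless for exponents, but it means your extended IFS is not literally the object Theorem~\ref{thm-2} is about. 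The paper's approach sidesteps all of this by never demanding a global IFS.

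One small slip in Step~3: evaluating $D(g_{x_1}\circ\cdots\circ g_{x_n})$ at $\Pi\sigma^n x$ gives $(D_{\Pi x}\psi^n)^{-1}$, so the relevant base point for $\psi^n$ is $\Pi x$, not $\Pi\sigma^n x$. This is harmless for the integrated quantities (by $\sigma$-invariance of $m$), so your identity $\dim_L m=\dim_{L^*}\mu$ is still correct once Step~1 is in place.
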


For the estimates of the box-counting dimension of attractors of $C^1$ non-conformal IFSs (resp. $C^1$ repellers), the reader may reasonably ask what difficulties arose in the previous work \cite{Falconer94} which the present article overcomes.  Below we give an explanation and illustrate roughly our strategy of the proof.

Let us give an account of the IFS case. The case of repellers is similar.  Let $K$ be the attractor of a $C^1$ IFS $\{f_i\}_{i=1}^\ell$. To estimate $\overline{\dim}_BK$, by definition one needs to estimate for given $r>0$ the least number of balls of radius $r$ required to cover $K$, say $N_r(K)$. To this end, one may iterate the IFS  to get $K=\bigcup_{i_1\ldots i_n} f_{i_1\cdots i_n}(K)$ and then estimate $N_r(f_{i_1\cdots i_n}(K))$ separately, where $f_{i_1\ldots i_n}:=f_{i_1}\circ\cdots\circ f_{i_n}$. For this purpose, one needs to estimate $N_r(f_{i_1\cdots i_n}(B))$, where  $B$ is a fixed ball covering $K$.

Under the strong assumption of  distortion property, Falconer was able to show that $f_{i_1\cdots i_n}(B)$ is roughly comparable to the  ellipsoid $(D_x f_{i_1\cdots i_n})(B)$ for each $x\in B$ (see \cite[Lemma 5.2]{Falconer94}), then by cutting the ellipsoid into roughly round pieces he could use certain singular value function to give an upper bound of $N_r(f_{i_1\cdots i_n}(B))$, and then apply the sub-additive thermodynamic formalism to estimate the growth rate of $\sum_{i_1\cdots i_n}N_r(f_{i_1\cdots i_n}(B))$.  However in the general $C^1$ non-conformal case, this approach is no longer feasible, since it seems hopeless to analyse the geometric shape of $f_{i_1\cdots i_n}(B)$ when $n$ is large.

The strategy of our approach is quite different. We use an observation going back to  Douady and  Oesterl\'{e} \cite{DouadyOesterle80} (see also \cite{Zhang97}) that,  for a given $C^1$ map $f$, when $B_0$ is an enough small ball in a fixed bounded region, then $f(B_0)$ is close to be an ellipsoid and so it can be covered by certain number of balls controlled by the singular values of the differentials of $f$ (see Lemma \ref{lem-1}). Since the maps $f_i$ in the IFS are contracting, we may apply this fact to the maps $f_{i_n}$, $f_{i_{n-1}}$, \ldots, $f_{i_1}$ recursively.  Roughly speaking, suppose that $B_0$ is a ball of small radius $r_0$, then  $f_{i_n}(B_0)$ can be covered by $N_1$ balls of radius $r_1$,  and the image of each of them under $f_{i_{n-1}}$ can be covered by $N_2$ balls of radius of $r_2$, and so on, where $N_j, r_j/r_{j-1}$ ($j=1,\ldots, n$) can be controlled by the singular values of the differetials of $f_{i_{n-j+1}}$. In this way we get an estimate that $N_{r_n}(f_{i_1\cdots i_n}(B_0))\leq N_1\ldots N_n$ (see Proposition \ref{pro-2} for a more precise statement), which is in spirit analogous to the corresponding estimate for the Hausdorff measure by Zhang \cite{Zhang97}.  In this process,  we don't need to consider the differentials of $f_{i_1\cdots i_n}$ and so no distortion property is required.  By developing a key technique  from the thermodynamic formalism (see Proposition \ref{pro-2.4}), we can get an upper bound for $\overline{\dim}_BK$, say $s_1$. Replacing the IFS $\{f_i\}_{i=1}^\ell$ by its $n$-th iteration $\{f_{i_1\ldots i_n}\}$, we get other upper bounds $s_n$. Again using some technique in the thermodynamic formalism (see Proposition \ref{pro-1}), we  manage to show that $\lim\inf s_n$ is bounded above by the singularity dimension.

The paper is organized as follows. In Section \ref{S-2}, we give some preliminaries about the sub-additive thermodynamic formalism and give the definitions of the singularity and Lyapunov dimensions with respect to a $C^1$ IFS. In  Section \ref{S-3}, we prove two auxiliary results (Propositions \ref{pro-1} and \ref{pro-2.4}) which play a key role in the proof of Theorem \ref{thm-1} (and that of Theorem \ref{thm-3}). The proofs of Theorems \ref{thm-1} and \ref{thm-2} are given in Sections \ref{S-4}-\ref{S-5}, respectively. In Section \ref{S-6}, we give the definitions of the singularity and Lyapunov dimensions in the repeller case and prove Theorems \ref{thm-3}-\ref{thm-4}. For the convenience of the reader,  in the appendix we summarize the main notation and typographical conventions used in this paper.

\section{Preliminaries}
\label{S-2}
\subsection{Variational principle for sub-additive pressure}
\label{subsec:TF}
In order to define the singularity and Lyapunov dimensions and prove our main results,  we require some elements from the sub-additive thermodynamic formalism.

 Let  $(X,d)$ be a compact metric space and
   $T:X\to X$ a continuous mapping. We call $(X,T)$ a {\it topological dynamical system}. For $x,y\in X$ and $n\in \N$, we define
   \begin{equation}\label{091}
     d_n(x,y):=\max\limits_{0 \leq i \leq n-1}d(T^i(x),T^i(y)).
   \end{equation}
  A set $E \subset X$ is called {\it $(n,\varepsilon)$-separated} if for every distinct $x,y\in E$ we have $d_n(x,y)>\varepsilon$.

Let $C(X)$ denote the set of real-valued continuous functions on $X$. Let $\mathcal{G}=\left\{g_n\right\}_{n=1}^{\infty }$ be a {\it sub-additive potential} on $X$, that is,  $g_n\in C(X)$ for all $n\geq 1$ such that
  \begin{equation}\label{090}
    g_{m+n}(x) \leq g_{n}(x)+g_{m}(T^nx) \quad\mbox{ for  all }  x\in X \mbox { and }n,m\in \N.
  \end{equation}
 Following  \cite{CFH08}, below we define the topological pressure of $\mathcal{G}$.
\begin{de}\label{e-pressure}
{\rm
 For given $n\in\mathbb{N}$ and $\varepsilon>0$ we define
\begin{equation}\label{089}
  P_n(X, T, \mathcal{G},\varepsilon):=
  \sup\left\{
  \sum\limits_{x\in E}
  \exp (g_n(x)):\;
  E \mbox{ is  an $(n,\varepsilon)$-separated set }
  \right\}.
\end{equation}
Then the  {\it topological pressure of $\mathcal{G}$ with respect to $T$}
is defined by
\begin{equation}\label{088}
  P(X, T, \mathcal{G}):=
  \lim\limits_{\varepsilon\to0}
  \limsup\limits_{n\to\infty}
  \frac{1}{n}\log  P_n(X,T,\mathcal{G},\varepsilon).
\end{equation}
}
\end{de}

If the potential $\mathcal G$ is additive, i.e. $g_n=S_ng:=\sum_{k=0}^{n-1}g\circ T^k$ for some  $g\in C(X)$, then $P(X, T, \mathcal G)$ recovers the classical topological pressure  $P(X, T, g)$ of $g$ (see e.g.~\cite{Walters82}).

Let $\mathcal{M}(X)$ denote the set of Borel probability measures on $X$, and  $\mathcal{M}(X,T)$ the set of  $T$-invariant Borel probability measures on $X$.
For $\mu\in \mathcal{M}(X,T)$, let $h_\mu(T)$  denote the measure-theoretic entropy of $\mu$ with respect to $T$ (cf. \cite{Walters82}). Moreover, for  $\mu \in\mathcal M(X, T)$, by sub-additivity we have
 \begin{equation}
 \label{e-N1}
 \mathcal{G}_*(\mu):=\lim_{n\to\infty} \frac{1}{n} \int g_n d\mu=\inf_n  \frac{1}{n} \int g_n d\mu\in [-\infty,\infty).
 \end{equation}
 See e.g.~\cite[Theorem 10.1]{Walters82}.  We call  $\mathcal{G}_*(\mu)$  the {\it Lyapunov exponent} of $\mathcal{G}$ with respect to $\mu$.

The proofs of our main results rely on the following general variational principle for the topological pressure of sub-additive potentials.
\begin{thm} [\cite{CFH08}, Theorem 1.1]
\label{thm:sub-additive-VP}  
Let $\mathcal{G}=\left\{g_n\right\}_{n=1}^{\infty }$ be a sub-additive potential on a topological dynamical system $(X,T)$. Suppose that the topological entropy of $T$ is finite. Then
  \begin{equation}\label{variational-principle}
    P(X, T,\mathcal{G})=
    \sup\left\{h_\mu(T)+\mathcal{G}_*(\mu):\;
    \mu\in\mathcal{M}(X,T)
    \right\}.
  \end{equation}
\end{thm}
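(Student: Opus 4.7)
The plan is to prove the variational principle by a two-direction argument patterned on the classical Misiurewicz proof for additive potentials, suitably adapted so that the sub-additive structure of $\mathcal{G}$ is used only through the quantity $\mathcal{G}_*(\mu) = \inf_m \frac{1}{m}\int g_m\, d\mu$. Throughout I would fix $\varepsilon > 0$ small and work with a finite measurable partition $\xi$ of $X$ whose diameter is less than $\varepsilon$ and whose boundary has zero measure for each invariant measure that arises.

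For the easy direction $P(X,T,\mathcal{G}) \geq h_\mu(T) + \mathcal{G}_*(\mu)$, fix $\mu \in \mathcal{M}(X,T)$. For each $n$, let $\xi_n = \bigvee_{i=0}^{n-1} T^{-i}\xi$ and in each nonempty atom $A$ select a point $x_A$ maximizing $g_n$ on $A$. The set $E_n := \{x_A\}$ is $(n,\varepsilon)$-separated, and the elementary convex-duality inequality
$$\sum_A \mu(A)\bigl(g_n(x_A) - \log \mu(A)\bigr) \leq \log \sum_A \exp(g_n(x_A))$$
yields $H_\mu(\xi_n) + \int g_n\, d\mu \leq \log P_n(X,T,\mathcal{G},\varepsilon)$. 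Dividing by $n$, letting $n\to\infty$, then refining $\xi$ and letting $\varepsilon \to 0$ gives the bound.

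The harder direction is $P(X,T,\mathcal{G}) \leq \sup\{h_\mu(T) + \mathcal{G}_*(\mu)\}$. For each $n$, pick an $(n,\varepsilon)$-separated set $E_n$ with $Z_n := \sum_{x\in E_n} \exp(g_n(x))$ nearly maximal, and define the Gibbs-type measure $\sigma_n = Z_n^{-1}\sum_{x\in E_n}\exp(g_n(x))\delta_x$ and its time average $\mu_n = \frac{1}{n}\sum_{i=0}^{n-1}\sigma_n \circ T^{-i}$. Extract a weak-$*$ convergent subsequence $\mu_{n_k} \to \mu$; the standard Krylov–Bogolyubov computation shows $\mu \in \mathcal{M}(X,T)$. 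The optimality of the weights gives the identity
$$\log Z_n = H_{\sigma_n}(\xi_n) + \int g_n\, d\sigma_n,$$
valid because $\xi_n$ separates the points of $E_n$, so the task reduces to bounding each term from above along the subsequence by $n\bigl(h_\mu(T,\xi) + \mathcal{G}_*(\mu)\bigr) + o(n)$.

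The main obstacle, and the real departure from the classical additive theory, is handling $\frac{1}{n}\int g_n\, d\sigma_n$. Here the sub-additive inequality $g_n(x) \leq \sum_{j=0}^{k-1} g_m(T^{jm}x) + g_r(T^{km}x)$ with $n = km + r$, $0 \leq r < m$, allows us to integrate against $\sigma_n$ and recognize the resulting arithmetic mean of $\sigma_n \circ T^{-jm}$ as a sequence weak-$*$ converging (along a further subsequence) to the same limit $\mu$; this yields $\limsup_n \frac{1}{n}\int g_n\, d\sigma_n \leq \frac{1}{m}\int g_m\, d\mu$ for every $m$, and taking the infimum gives $\leq \mathcal{G}_*(\mu)$. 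For the entropy term, the Misiurewicz decomposition of $\xi_n$ into $k$ shifts of $\xi_m$ plus a remainder of bounded cardinality, combined with concavity of $H_\cdot(\xi_m)$ and upper semicontinuity on partitions with $\mu$-null boundary, gives $\limsup_n \frac{1}{n}H_{\sigma_n}(\xi_n) \leq \frac{1}{m}H_\mu(\xi_m)$, and then $\leq h_\mu(T,\xi) \leq h_\mu(T)$. Adding the two bounds, then sending $\varepsilon \to 0$, completes the proof.
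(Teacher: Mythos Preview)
This theorem is not proved in the present paper: it is quoted from \cite{CFH08}, so there is no in-paper proof to compare against. Your outline is essentially the strategy of that reference --- the Misiurewicz construction for the hard direction, with the sub-additive potential controlled by the upper-semicontinuity lemma that the paper records separately as Lemma~\ref{lem-3}.

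Two steps in your sketch would not survive as written. In the easy direction, selecting one point per atom of $\xi_n$ does \emph{not} yield an $(n,\varepsilon)$-separated set merely because $\mathrm{diam}\,\xi<\varepsilon$: adjacent atoms can contribute arbitrarily close representatives. (That hypothesis gives the opposite conclusion --- each atom of $\xi_n$ has $d_n$-diameter $<\varepsilon$, so a separated set meets each atom at most once.) In the hard direction, the sparse average $k^{-1}\sum_{j=0}^{k-1}\sigma_n\circ T^{-jm}$ need not converge to $\mu$; to recover the full Ces\`aro mean $\mu_n$ one must also average the decomposition $g_n\le g_l+\sum_j g_m\circ T^{l+jm}+g_r\circ T^{l+km}$ over all offsets $0\le l<m$, which is precisely how \cite[Lemma~2.3]{CFH08} (stated here as Lemma~\ref{lem-3}) obtains $\limsup_n n^{-1}\int g_n\,d\nu_n\le\mathcal G_*(\mu)$. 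Both points are repairable with standard arguments and do not affect the overall architecture.
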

Particular cases of the above result, under stronger assumptions on the dynamical systems and the potentials, were previously obtained by many authors, see for example \cite{Falconer88b, FengLau2002, Feng2004, Kaenmaki04, Mummert06, Barreira10} and references therein.

Measures that achieve the supremum in \eqref{variational-principle}  are called  {\em  equilibrium measures} for the potential ${\mathcal G}$. There exists at least one ergodic equilibrium measure when the entropy map $\mu\mapsto h_\mu(T)$ is upper semi-continuous; this is the case when $(X,T)$ is a subshift,    see e.g.~\cite[Proposition 3.5]{Feng11} and the remark there.

The following well-known result is also needed in our proofs.
\begin{lem}
\label{lem-2.3}  Let $X_i, i= 1,2$ be compact metric spaces and let $T_i:
X_i \to X_i$ be continuous. Suppose $\pi: X_1 \to X_2$ is a
continuous surjection such that the following diagram commutes:
\begin{equation*}
  \xymatrix{ X_1 \ar[r]^{T_1}  \ar[d]_\pi   & %
                   X_1\ar[d]^{\pi}\\ %
X_2\ar[r]_{T_2} & X_2  }%
\end{equation*}
 Then $\pi_*:\;{\mathcal M}(X_1, T_1)\rightarrow {\mathcal M}(X_2, T_2)$
(defined by $\mu\mapsto \mu\circ \pi^{-1}$) is surjective.
If furthermore there is an integer $q>0$ so that $\pi^{-1}(y)$ has at
most $q$ elements for each $y\in X_2$, then
$$
h_{\mu}(T_1)=h_{\mu\circ \pi^{-1}}(T_2)
$$
for each $\mu\in {\mathcal M}(X_1, T_1)$.
\end{lem}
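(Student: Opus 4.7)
My plan for proving Lemma \ref{lem-2.3} splits into two pieces, one for each statement.

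For the surjectivity of $\pi_*$ onto $\mathcal{M}(X_2,T_2)$, given $\nu\in\mathcal{M}(X_2,T_2)$ I would first produce some (not necessarily $T_1$-invariant) Borel probability measure $\tilde{\mu}$ on $X_1$ with $\pi_*\tilde{\mu}=\nu$. This is standard functional analysis: the continuous surjection $\pi$ induces an isometric embedding $\pi^*:C(X_2)\hookrightarrow C(X_1)$, so by the Hahn--Banach theorem and the Riesz representation theorem its dual $\pi_*$ maps $\mathcal{M}(X_1)$ onto $\mathcal{M}(X_2)$. Now form the Ces\`aro averages $\mu_n:=n^{-1}\sum_{k=0}^{n-1}(T_1^{k})_*\tilde{\mu}$. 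By the Krylov--Bogolyubov argument, any weak-$*$ accumulation point $\mu$ of $\{\mu_n\}$ lies in $\mathcal{M}(X_1,T_1)$. The intertwining $\pi\circ T_1=T_2\circ \pi$ together with the $T_2$-invariance of $\nu$ gives $\pi_*(T_1^k)_*\tilde{\mu}=(T_2^k)_*\nu=\nu$ for every $k$; hence $\pi_*\mu_n=\nu$ for each $n$, and weak-$*$ continuity of $\pi_*$ yields $\pi_*\mu=\nu$.

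For the entropy equality, the sub-$\sigma$-algebra $\mathcal{F}:=\pi^{-1}\mathcal{B}(X_2)$ is $T_1$-invariant in the sense $T_1^{-1}\mathcal{F}\subseteq\mathcal{F}$, and the classical Abramov--Rokhlin relative entropy formula yields
$$h_\mu(T_1)=h_\mu(T_1,\mathcal{F})+h_\mu(T_1\mid\mathcal{F})=h_{\pi_*\mu}(T_2)+h_\mu(T_1\mid\mathcal{F}).$$
It suffices therefore to show $h_\mu(T_1\mid\mathcal{F})=0$. I would disintegrate $\mu=\int\mu_y\,d(\pi_*\mu)(y)$ along the fibers of $\pi$, where each $\mu_y$ is a probability measure supported on the finite set $\pi^{-1}(y)$ of cardinality at most $q$. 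For any finite Borel partition $\alpha$ of $X_1$ and every $n\geq 1$,
$$H_\mu\Bigl(\bigvee_{i=0}^{n-1}T_1^{-i}\alpha\,\Big|\,\mathcal{F}\Bigr)=\int H_{\mu_y}\Bigl(\bigvee_{i=0}^{n-1}T_1^{-i}\alpha\Bigr)\,d(\pi_*\mu)(y)\leq \log q,$$
since any measurable partition under a probability measure carried by at most $q$ points has entropy at most $\log q$. Dividing by $n$ and letting $n\to\infty$ forces $h_\mu(T_1,\alpha\mid\mathcal{F})=0$, and taking the supremum over $\alpha$ completes the proof.

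The main obstacle is not computational but rather the clean invocation of two standard but non-trivial background facts: the disintegration theorem on the compact metric (hence Polish) space $X_1$ with respect to the continuous map $\pi$, and the relative entropy decomposition $h_\mu(T_1)=h_\mu(T_1,\mathcal{F})+h_\mu(T_1\mid\mathcal{F})$ for a $T_1$-invariant sub-$\sigma$-algebra $\mathcal{F}$. Once these are in hand, the finite-fiber hypothesis feeds directly into the uniform bound $\log q$ that kills the conditional entropy, which is what makes the argument genuinely simple.
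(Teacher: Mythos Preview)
Your proposal is correct and takes essentially the same approach as the paper, which simply cites Ma\~{n}\'{e} for the surjectivity of $\pi_*$ and the Abramov--Rokhlin formula for the entropy equality. Your argument just unpacks those citations: the Krylov--Bogolyubov averaging is the standard proof behind the first reference, and your finite-fiber bound $H_{\mu_y}(\cdot)\le\log q$ killing the relative entropy term is precisely how one extracts the equality from Abramov--Rokhlin.
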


\begin{proof} The first part of the result is the same as
\cite[Chapter IV, Lemma 8.3]{Mane1987}. The second part follows from
the Abramov-Rokhlin formula (see \cite{Bogenschutz1992}).
\end{proof}

\subsection{Subshifts}
In this subsection, we introduce some basic notation and definitions
about subshifts.

Let $(\Sigma,\sigma)$ be the one-sided full shift over the alphabet $\mathcal A=\{1,\ldots, \ell\}$. That is, $\Sigma=\mathcal A^{\Bbb N}$ endowed with the product topology, and $\sigma:\Sigma\to \Sigma$ is the left shift defined by $(x_i)_{i=1}^\infty\mapsto (x_{i+1})_{i=1}^\infty$.   The topology of $\Sigma$ is compatible with the following metric on $\Sigma$:
$$
d(x,y)=2^{-\inf\{k:\; x_{k+1}\neq y_{k+1}\}} \quad \mbox{ for }x=(x_i)_{i=1}^\infty,\; y=(y_i)_{i=1}^\infty.
$$
 For $x=(x_i)_{i=1}^\infty\in \Sigma$ and $n\in {\Bbb N}$, write $x|n=x_1\ldots x_n$.

Let $X$ be a non-empty compact subset of $\Sigma$  satisfying  $\sigma X\subset X$. We call $(X, \sigma)$  a {\it one-sided subshift} or simply {\it subshift} over  $\mathcal A$. We denote the collection  of finite words allowed in $X$ by $X^*$, and the subset of $X^*$ of  words of length $n$ by $X_n^*$. In particular, define for $n\in {\Bbb N}$,
\begin{equation}
\label{e-xn}
X^{(n)}:=\left\{(x_i)_{i=1}^\infty\in \mathcal A^{\Bbb N}:\; x_{kn+1}x_{kn+2}\ldots x_{(k+1)n}\in X_n^* \; \mbox { for all } k\geq 0\right\}.
\end{equation}

 Let  $\mathcal G=\{ g_n\}_{n=1}^\infty$  be a  sub-additive potential on a subshift $(X,\sigma)$. It is known that in such case, the topological pressure of $\mathcal G$ can be alternatively defined by
 \begin{equation}
 \label{e-Falconer}
 P(X,\sigma, \mathcal G)= \lim_{n\to\infty} \frac{1}{n}\log\left( \sum_{\mathbf{i}\in X_n^*} \sup_{x\in [\mathbf{i}]\cap X} \exp(g_n(x)) \right),
\end{equation}
where $[\mathbf{i}]:=\{x\in \Sigma:\; x|n=\mathbf{i}\}$ for $\mathbf{i}\in \mathcal A^n$; see \cite[p.~649]{CFH08}.
The limit can be seen to exist by using a standard sub-additivity argument. We remark that \eqref{e-Falconer} was first introduced by Falconer in \cite{Falconer88b} for the definition of the topological pressure of sub-additive potentials on a mixing repeller.

\subsection{Singularity dimension and Lyapunov dimension with respect to  $C^1$ IFSs}
\label{S-2.3}
In this subsection, we define the singularity  and Lyapunov dimensions with respect to $C^1$ IFSs. The corresponding definitions with respect to $C^1$ repellers will be given in Section~\ref{S-5}.

Let $\{f_i\}_{i=1}^\ell$ be a $C^1$ IFS on $\R^d$ with attractor $K$.  Let $(\Sigma,\sigma)$ be the one-sided full shift over the alphabet $\{1,\ldots, \ell\}$ and let $\Pi: \Sigma\to K$ denote the corresponding coding map defined as in \eqref{e-coding}. For a differentiable function $f:\; U\subset \R^d\to \R^d$, let $D_zf$ denote the differential of $f$ at $z\in U$.

For $T\in \R^{d\times d}$, let $\alpha_1(T)\geq\cdots\geq \alpha_d(T)$ denote the  singular values of $T$. Following \cite{Falconer88},  for $s\geq 0$ we define the {\it singular value function} $\phi^s:\; \R^{d \times d}\to [0,\infty)$ as
\begin{equation}
\label{e-singular}
\phi^s(T)=\left\{
\begin{array}
{ll}
\alpha_1(T)\cdots \alpha_k(T) \alpha_{k+1}^{s-k}(T) & \mbox{ if }0\leq s\leq d,\\
\det(T)^{s/d} & \mbox{ if } s>d,
\end{array}
\right.
\end{equation}
where $k=[s]$ is the integral part of $s$.

\begin{de}
\label{de-1}
{\rm
 For a compact   subset $X$ of $\Sigma$ with $\sigma(X)\subset X$, the {\it singularity dimension of $X$ with respect to $\{f_i\}_{i=1}^\ell$}, written as  $\dim_SX$, is the unique non-negative value $s$ for which
$$
P(X,\sigma, \mathcal G^s)=0,
$$
where $\mathcal G^s=\{g_n^s\}_{n=1}^\infty$ is the sub-additive potential on $\Sigma$ defined by
\begin{equation}
\label{e-gn}
g_n^s(x)=\log \phi^s(D_{\Pi\sigma^n x}f_{x|n}), \quad x\in \Sigma,
\end{equation}
with $f_{x|n}:=f_{x_1}\circ \cdots\circ f_{x_n}$ for $x=(x_n)_{n=1}^\infty$.
}
\end{de}

\begin{de}
\label{de-2}
{\rm
Let $m$ be an ergodic $\sigma$-invariant Borel probability measure on $\Sigma$. For any $i\in\{1, \ldots ,d\}$, the $i$-th Lyapunov exponent of $m$ is
\begin{equation}\label{e-lambda_1}
\lambda_i(m):=\lim_{n\to \infty} \frac{1}{n}\int \log\left( \alpha_i(D_{\Pi\sigma^nx}f_{x|n})\right)\; dm(x).
\end{equation}
The {\it Lyapunov dimension of $m$ with respect to $\{f_i\}_{i=1}^\ell$}, written as $\dim_{\rm L}m$, is the unique non-negative value $s$ for which
$$
h_m(\sigma)+ \mathcal G^s_*(m)=0,
$$
where $\mathcal G^s=\{g^s_n\}_{n=1}^\infty$ is defined as in  \eqref{e-gn} and $\mathcal G^s_*(m):=\lim_{n\to \infty} \frac{1}{n}\int g^s_n\; dm$. See Figure \ref{figure-1} for the mapping $s\mapsto -\mathcal G^s_*(m)$ in the case when $d=2$.
}
\end{de}

It follows from the definition of the singular value function $\phi^s$
that for an ergodic measure $m$ we have
\begin{equation*}\label{e-eq6}
  \mathcal G^s_*(m)=
  \left\{
    \begin{array}{ll}
    \lambda_1(m)+\cdots+\lambda_{[s]}(m)+(s-[s])\lambda_{[s]+1}(m)  , &
\hbox{if $s < d$} \\
 \frac{s}{d} (\lambda_1(m)+\cdots+\lambda_{d}(m) )   , & \hbox{if $s \geq d$.}
    \end{array}
  \right.
\end{equation*}
 Observe that in the special case when all the Lyapunov exponents are equal to the same $\lambda$ then $\dim_{\rm L}m=\frac{h_m(\sigma)}{-\lambda}$.

\begin{figure}
  \centering
  \includegraphics[width=10cm]{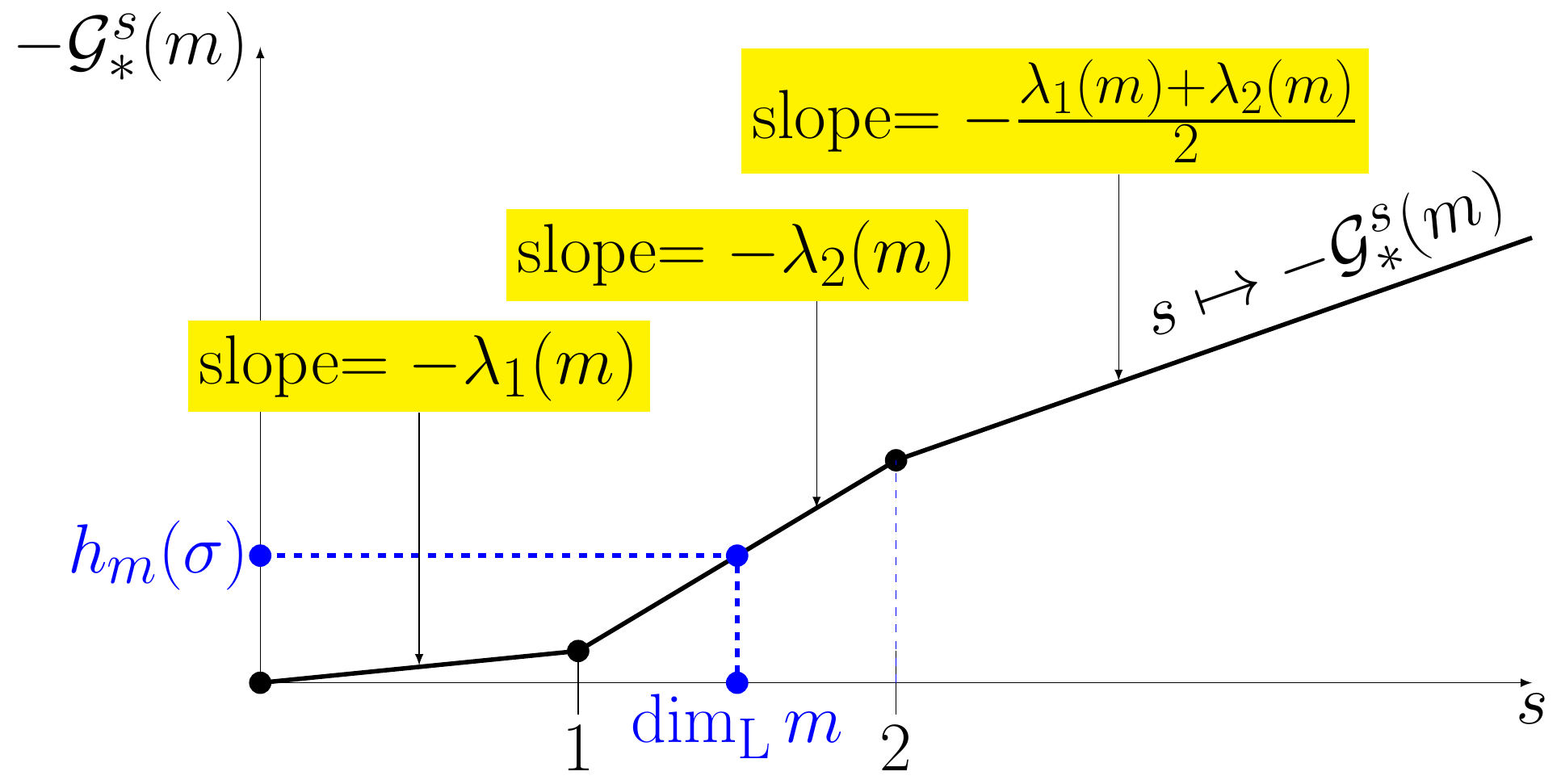}
  \caption{The connection between Lyapunov dimension, entropy and the function  $s\mapsto -\mathcal{G}^s_*(m)$  when $d=2$.}\label{figure-1}
\end{figure}

\begin{rem}
{\rm
\begin{itemize}
\item[(i)] The concept of singularity dimension was first introduced by Falconer \cite{Falconer88, Falconer94}; see also \cite{KaenmakiVilppolainen10}. It is also called {\it affinity dimension} when the IFS $\{f_i\}_{i=1}^\ell$ is affine, i.e.~each map $f_i$ is affine.
\item[(ii)] The definition of Lyapunov dimension of ergodic measures with respect to an IFS  presented above was taken from \cite{JordanPollicott08}. It is a  generalization of that given in \cite{JordanPollicottSimon07} for affine IFSs.
\end{itemize}
}
\end{rem}

\subsection{A special consequence of Kingman's subadditive ergodic theorem}
Here we state a special consequence of Kingman's subadditive ergodic theorem which will be  needed in the proof of Lemma  \ref{lem-4.1}.
\begin{lem}
\label{lem-2.3.1}
Let $T$ be a measure preserving transformation of the probability space $(X, {\mathcal B}, m)$, and
 let $\{g_{n}\}_{{n\in {\mathbb  {N}}}}$ be a sequence of $L^{1}$ functions satisfying the following sub-additivity relation:
 $$g_{{n+m}}(x)\leq g_{n}(x)+g_{m}(T^{n}x) \quad \mbox{ for all }x\in X.$$
  Suppose that there exists $C>0$ such that
\begin{equation}
\label{e-2.3.1}
|g_n(x)|\leq Cn \quad \mbox{ for all $x\in X$ and $n\in {\Bbb N}$}.
\end{equation}
 Then
$$
\lim _{n\to \infty }E\left(\frac {g_{n}}{n}|{\mathcal C}_n\right)(x)=g(x):=\lim_{n\to \infty}\frac{g_n(x)}{n}
$$
for $m$-a.e.~$x$, where
$${\mathcal C}_n:=\{B\in {\mathcal B}:\; T^{-n}B =B \; a.e.\},$$
 $E(\cdot|\cdot)$ denotes the conditional expectation and $g(x)$ is $T$-invariant.
\end{lem}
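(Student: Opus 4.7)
The plan combines Kingman's subadditive ergodic theorem with Birkhoff's ergodic theorem applied to the iterate $T^n$. By Kingman's theorem applied to $\{g_n\}$ under the bound $|g_n|\le Cn$, one obtains $g_n/n\to g$ both $m$-a.e.\ and, by dominated convergence (since $|g_n/n|\le C$), in $L^1$, with $g$ being $T$-invariant. In particular $g$ is $T^n$-invariant, hence $\mathcal{C}_n$-measurable, for every $n\ge 1$; Jensen's inequality then yields the $L^1$ convergence $\|E(g_n/n\mid\mathcal{C}_n)-g\|_1=\|E(g_n/n-g\mid\mathcal{C}_n)\|_1\le\|g_n/n-g\|_1\to 0$.

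For the a.e.\ statement, I would apply Birkhoff's theorem to the measure-preserving transformation $T^n$---whose invariant $\sigma$-algebra is precisely $\mathcal{C}_n$---and to the function $g_n/n$, obtaining
\[
E(g_n/n\mid\mathcal{C}_n)(x)=\lim_{N\to\infty}\frac{1}{Nn}\sum_{k=0}^{N-1}g_n(T^{kn}x)\qquad\text{a.e.}
\]
A lower bound is then immediate from iterated sub-additivity $g_{Nn}(x)\le\sum_{k=0}^{N-1}g_n(T^{kn}x)$: dividing by $Nn$ and letting $N\to\infty$ gives $g(x)\le E(g_n/n\mid\mathcal{C}_n)(x)$ a.e. The key structural observation is the \emph{monotonicity-on-multiples} inequality $E(g_{nm}/(nm)\mid\mathcal{C}_{nm})\le E(g_n/n\mid\mathcal{C}_n)$ a.e., valid for all $n,m\ge 1$; it follows from the sub-additive bound $g_{nm}(T^{jnm}x)\le\sum_{k=0}^{m-1}g_n(T^{(jm+k)n}x)$ by summing over $j=0,\dots,M-1$, dividing by $Mnm$, and letting $M\to\infty$ (Birkhoff for $T^{nm}$ on the left, for $T^n$ on the right).

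Along the factorial subsequence $n_k=k!$ (so that $n_k\mid n_{k+1}$), the monotonicity inequality and the lower bound make $\{E(g_{n_k}/n_k\mid\mathcal{C}_{n_k})\}_k$ a.e.\ non-increasing and bounded below by $g$, so it admits an a.e.\ limit, which the $L^1$ convergence above forces to equal $g$. The main obstacle is extending from the factorial subsequence to the full sequence: given $\varepsilon>0$, for a.e.~$x$ one chooses $k_0$ with $E(g_{k_0!}/k_0!\mid\mathcal{C}_{k_0!})(x)<g(x)+\varepsilon$, so the monotonicity inequality gives $E(g_N/N\mid\mathcal{C}_N)(x)<g(x)+\varepsilon$ whenever $k_0!\mid N$; for general $N=q\,k_0!+r$ with $0\le r<k_0!$, the sub-additive decomposition $g_N\le g_{q\,k_0!}+g_r\circ T^{q\,k_0!}$ reduces to this case modulo an error of order $Ck_0!/N\to 0$, with Birkhoff averaging along $T^{k_0!}$-orbits controlling the main contribution. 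Combined with the lower bound $g\le \liminf_n E(g_n/n\mid\mathcal{C}_n)$ a.e., this yields $\lim_n E(g_n/n\mid\mathcal{C}_n)=g$ $m$-a.e., as required.
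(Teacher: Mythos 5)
Your lower bound is exactly the paper's: Birkhoff for $T^n$ identifies $E(g_n/n\mid\mathcal C_n)$ with the $T^n$-orbit average of $g_n/n$, and iterated sub-additivity gives $E(g_n/n\mid\mathcal C_n)\ge g$ a.e. Your monotonicity-on-multiples inequality $E(g_{nm}/(nm)\mid\mathcal C_{nm})\le E(g_n/n\mid\mathcal C_n)$ is also correct, because when $n\mid N$ the exponents $jN+i n$ ($0\le i<N/n$, $j\ge0$) enumerate \emph{exactly} the $T^n$-orbit, so the Birkhoff average for $T^N$ of the decomposed $g_N$ collapses onto the Birkhoff average for $T^n$ of $g_n$. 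This makes the argument along $n_k=k!$ complete.

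The gap is in the final step, extending from $k!\mid N$ to general $N$. Writing $N=q\ell+r$ with $\ell=k_0!$ and $0\le r<\ell$, you decompose $g_N\le g_{q\ell}+g_r\circ T^{q\ell}\le\sum_{i=0}^{q-1}g_\ell\circ T^{i\ell}+C\ell$ and try to Birkhoff-average the resulting sum along the $T^N$-orbit. But the exponents you obtain are $jN+i\ell=(jq+i)\ell+jr$, and since $r\neq 0$ these are \emph{not} multiples of $\ell$: they sit on a union of $T^\ell$-orbits of the points $T^{jr}x$, $j\ge 0$, not on the $T^\ell$-orbit of $x$. Consequently the limit is \emph{not} $E(g_\ell/\ell\mid\mathcal C_\ell)(x)$, and the monotonicity inequality does not apply; $\mathcal C_N$ and $\mathcal C_\ell$ are incomparable $\sigma$-algebras when $\ell\nmid N$, and the associated Birkhoff averages live on genuinely different orbit structures. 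There is no standard ergodic theorem that handles such perturbed arithmetic progressions, and the step as sketched does not go through. (A secondary issue: you choose $k_0$ depending on $x$, and then need a bound uniform over $N$; that can be patched, but the orbit mismatch above is the real obstruction.)

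The paper avoids this entirely by always descending to $\mathcal C_1$, which \emph{is} contained in every $\mathcal C_n$. It invokes the pointwise inequality (K\"aenm\"aki, \cite[Lemma 2.2]{Kaenmaki04})
\[
\frac{g_n(x)}{n}\le\frac{1}{kn}\sum_{j=0}^{n-1}g_k(T^jx)+\frac{3kC}{n},\qquad 0<k<n,
\]
which decomposes $g_n$ along the $T$-orbit (the finest orbit, hence compatible with all $\mathcal C_n$). Taking $E(\cdot\mid\mathcal C_n)$ and using the identity $E\bigl(\sum_{j=0}^{n-1}f\circ T^j\mid\mathcal C_n\bigr)=nE(f\mid\mathcal C_1)$ (itself a Birkhoff computation in which the exponents $jn+i$ fill all of $\{0,\dots,kn-1\}$, with no gaps or offsets), one gets $E(g_n/n\mid\mathcal C_n)\le E(g_k/k\mid\mathcal C_1)+3kC/n$ a.e. Letting $n\to\infty$ and then $k\to\infty$ with dominated convergence yields $\limsup_n E(g_n/n\mid\mathcal C_n)\le E(g\mid\mathcal C_1)=g$ (using $T$-invariance of $g$), which combined with the lower bound finishes the proof. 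To salvage your route you would need either a genuine maximal/ergodic input for the offset progressions $jN+i\ell$, or to replace your monotonicity-on-multiples comparison by a descent to $\mathcal C_1$ as the paper does.
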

\begin{proof}
By Kingman's subadditive ergodic theorem, $\displaystyle\frac{g_n}{n}$ converges pointwisely to a $T$-invariant function $g$ a.e.  In the meantime, by Birkhoff's ergodic theorem,
for each $n\in {\Bbb N}$,
$$
E(g_n|{\mathcal C}_n)(x)=\lim_{k\to \infty}\frac{1}{k}\sum_{j=0}^{k-1}g_n(T^{jn}x) \mbox{ a.e.}
$$
Since   $\sum_{j=0}^{k-1}g_n(T^{jn}x)\geq g_{kn}(x)$ by subadditivity,   it follows that
\begin{equation}
\label{e-2.3.2}
E\left(\frac {g_{n}}{n}\Big|{\mathcal C}_n\right)(x)\geq \lim_{k\to \infty} \frac{g_{nk}(x)}{nk} = g(x) \mbox{ a.e.}
\end{equation}
Since the sequence $\{g_n\}$ is sub-additive and satisfies \eqref{e-2.3.1}, by \cite[Lemma 2.2]{Kaenmaki04}, for any $0< k< n$,
$$
\frac{g_n(x)}{n}\leq \frac{1}{kn}\sum_{j=0}^{n-1} g_k(T^jx)+\frac{3kC}{n},\quad x\in X.
$$
As a consequence,
\begin{equation}
\label{e-2.3.3}
E\left(\frac {g_{n}}{n}\Big|{\mathcal C}_n\right)(x)\leq \frac{1}{kn}E\left(\sum_{j=0}^{n-1}g_{k}\circ T^j\Big|{\mathcal C}_n\right)(x)+\frac{3kC}{n}.
\end{equation}
Notice that for each $f\in L^1$ and $n\in {\Bbb N}$,
\begin{equation}
\label{e-2.3.4}
E\left(\sum_{j=0}^{n-1}f\circ T^j\Big|{\mathcal C}_n\right)=nE(f|{\mathcal C}_1) \mbox{ a.e.}
\end{equation}
To see the above identity, one simply applies Birkhoff's ergodic theorem (with respect to the transformations $T^n$ and $T$, respectively) to the following limits
$$
\lim_{k\to \infty}\frac{1}{k}\sum_{j=0}^{k-1}\left(f+f\circ T+\cdots+f\circ T^{n-1}\right)(T^{nj}x)
=\lim_{k\to \infty} \frac{1}{k}\sum_{j=0}^{nk-1}f(T^jx).$$
Now applying the identity \eqref{e-2.3.4} (in which taking $f=g_k$) to  \eqref{e-2.3.3} yields
$$
E\left(\frac {g_{n}}{n}\Big|{\mathcal C}_n\right)\leq E\left(\frac {g_{k}}{k}\Big|{\mathcal C}_1\right) +\frac{3kC}{n} \mbox{ a.e.} \mbox{ for }0<k<n.
$$
It follows that $$\limsup_{n\to  \infty}E\left(\frac {g_{n}}{n}\Big|{\mathcal C}_n\right) \leq E\left(\frac {g_{k}}{k}\Big|{\mathcal C}_1\right) \mbox { a.e.~ for each $k$},
$$
 so by the dominated convergence theorem,
$$
\limsup_{n\to  \infty}E\left(\frac {g_{n}}{n}\Big|{\mathcal C}_n\right) \leq \lim_{k\to \infty} E\left(\frac {g_{k}}{k}\Big|{\mathcal C}_1\right)=
E(g|{\mathcal C}_1)=g \mbox{ a.e.}
$$
Combining it with \eqref{e-2.3.2} yields the desired result $\lim_{n\to  \infty}E\left(\frac {g_{n}}{n}\Big|{\mathcal C}_n\right)=g$ a.e.
\end{proof}

 \section{Some auxiliary results}
 \label{S-3}

In this section we give two auxiliary results (Propositions \ref{pro-1} and \ref{pro-2.4}) which are needed in the proof of Theorem \ref{thm-1}.
\begin{pro}
\label{pro-1}
Let $(X,\sigma)$ be a one-sided subshift over a finite alphabet ${\mathcal A}$ and  $\mathcal{G}=\{g_n\}_{n=1}^\infty$  a sub-additive potential on $\mathcal A^{\Bbb N}$.  Then
\begin{equation}
\label{e-t3}
P(X,\sigma, \mathcal G)=\lim_{n\to \infty}\frac{1}{n} P\left(X^{(n)},\sigma^n, g_n\right)=\inf_{n\geq 1}\frac{1}{n} P\left(X^{(n)},\sigma^n, g_n\right),
\end{equation}
where $X^{(n)}$ is defined as in \eqref{e-xn}.
\end{pro}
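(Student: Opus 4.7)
Denote $a_n := P(X^{(n)}, \sigma^n, g_n)$. The goal is to show
\[
P(X,\sigma,\mathcal G) \;=\; \lim_{n\to\infty} \tfrac{a_n}{n} \;=\; \inf_{n\geq 1} \tfrac{a_n}{n}.
\]
The plan is to establish two inequalities: (I) $a_n/n \geq P(X,\sigma,\mathcal G)$ for every $n\geq 1$, whence $\inf_n a_n/n \geq P(X,\sigma,\mathcal G)$; and (II) $\limsup_{n\to\infty} a_n/n \leq P(X,\sigma,\mathcal G)$. Since $\inf_n a_n/n \leq \liminf_n a_n/n \leq \limsup_n a_n/n$, (I) and (II) together force all three of these quantities to coincide with $P(X,\sigma,\mathcal G)$, which proves the proposition.

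For (I) I would invoke the variational principle (Theorem \ref{thm:sub-additive-VP}). Any $\mu\in\mathcal M(X,\sigma)$ is automatically $\sigma^n$-invariant, and since $X\subset X^{(n)}$ lies in $\mathcal M(X^{(n)},\sigma^n)$. The Kolmogorov--Sinai identity gives $h_\mu(\sigma^n)=nh_\mu(\sigma)$, and the definition $\mathcal G_*(\mu) = \inf_n\tfrac{1}{n}\int g_n\, d\mu$ already encoded in \eqref{e-N1} yields $\int g_n\, d\mu \geq n\mathcal G_*(\mu)$. Applying the variational principle to $(X^{(n)},\sigma^n,g_n)$ then gives
\[
a_n \;\geq\; h_\mu(\sigma^n)+\int g_n\, d\mu \;\geq\; n\bigl(h_\mu(\sigma)+\mathcal G_*(\mu)\bigr);
\]
taking the supremum over $\mu$ and again using the variational principle for $(X,\sigma,\mathcal G)$ delivers $a_n\geq nP(X,\sigma,\mathcal G)$.

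For (II) I would rely on the cylinder formula \eqref{e-Falconer}. Since $(X^{(n)},\sigma^n)$ is conjugate to the full shift over the alphabet $X_n^*$, \eqref{e-Falconer} applied to it reads
\[
a_n = \lim_{k\to\infty} \tfrac{1}{k}\log\!\!\!\sum_{(\mathbf{i}_1,\ldots,\mathbf{i}_k)\in(X_n^*)^k}\! \sup_{y\in[\mathbf{i}_1\cdots\mathbf{i}_k]\cap X^{(n)}}\! \exp\Bigl(\sum_{j=0}^{k-1}g_n(\sigma^{jn}y)\Bigr),
\]
and the partition-function sequence inside is sub-multiplicative in $k$ by the standard decoupling $\sup e^{A+B}\leq \sup e^A\cdot\sup e^B$; its $k=1$ value therefore already dominates the limit, giving the clean upper bound
\[
a_n \;\leq\; \log\sum_{\mathbf{i}\in X_n^*}\sup_{y\in[\mathbf{i}]\cap X^{(n)}} e^{g_n(y)}.
\]
Comparing this with the analogue $P(X,\sigma,\mathcal G) = \lim_n \tfrac{1}{n}\log\sum_{\mathbf{i}\in X_n^*}\sup_{y\in[\mathbf{i}]\cap X}e^{g_n(y)}$, also from \eqref{e-Falconer}, reduces (II) to showing that the excess obtained by maximising $g_n$ over $[\mathbf{i}]\cap X^{(n)}$ rather than over $[\mathbf{i}]\cap X$ is sub-exponential in $n$.

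The main obstacle is precisely this last comparison. In general $X^{(n)}\supsetneq X$: the defining condition of $X^{(n)}$ only constrains the non-overlapping $n$-blocks at offsets $1, n{+}1, 2n{+}1,\ldots$ and allows arbitrary transitions between consecutive blocks, whilst a continuous potential $g_n$ on $\mathcal A^{\Bbb N}$ may depend on the full tail of the sequence rather than on the first $n$ coordinates alone. My plan to close this gap is measure-theoretic: select an ergodic $\sigma^n$-equilibrium state $\nu_n$ for $a_n$ (whose existence follows from upper-semicontinuity of the $\sigma^n$-entropy on the subshift $X^{(n)}$), form the $\sigma$-averaged measure $\bar\nu_n:=\tfrac{1}{n}\sum_{i=0}^{n-1}\sigma_*^i\nu_n$ on $\mathcal A^{\Bbb N}$, and use the entropy identity $h_{\bar\nu_n}(\sigma)=\tfrac{1}{n}h_{\nu_n}(\sigma^n)$ together with Lemma \ref{lem-2.3.1} (applied to the sub-additive sequence $\{g_{kn}\}_k$ under the transformation $\sigma^n$) to control the Lyapunov integral $\mathcal G_*(\bar\nu_n)$. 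A weak-$*$ compactness argument on $\mathcal M(\mathcal A^{\Bbb N},\sigma)$, exploiting the upper-semicontinuity of $\mu\mapsto h_\mu(\sigma)+\mathcal G_*(\mu)$ on subshifts, should then extract a subsequential limit supported on $X$ and yield $\limsup_n a_n/n\leq P(X,\sigma,\mathcal G)$.
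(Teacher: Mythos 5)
Your high-level architecture coincides with the paper's: prove one direction via the variational principle, the other via equilibrium measures on $X^{(n)}$, a $\sigma$-averaging, a weak-$*$ limit, and upper semicontinuity. Part (I) is fine and matches the paper essentially verbatim. However, there are two genuine gaps in (II).

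First, the proposed control of the Lyapunov integral goes in the \emph{wrong direction}. For a $\sigma^n$-invariant measure $\nu_n$ and $\bar\nu_n = \tfrac1n\sum_{i=0}^{n-1}\sigma^i_*\nu_n$, sub-additivity under $\sigma^n$ gives $\int g_{kn}\,d\nu_n \leq k\int g_n\,d\nu_n$, hence
\[
\mathcal G_*(\bar\nu_n) \;=\; \lim_{k}\tfrac{1}{kn}\int g_{kn}\,d\nu_n \;\leq\; \tfrac1n\int g_n\,d\nu_n.
\]
(Indeed your own Lemma~\ref{lem-2.3.1}, applied as you suggest to $\{g_{kn}\}_k$ under $\sigma^n$, confirms exactly this via its inequality $E(g_n/n|\mathcal C_n)\geq g$: setting $k=1$ yields $\int g_n\,d\nu_n \geq n\mathcal G_*(\bar\nu_n)$.) Consequently $h_{\bar\nu_n}(\sigma)+\mathcal G_*(\bar\nu_n) \leq \tfrac1n a_n$, so upper-semicontinuity of $\mu\mapsto h_\mu(\sigma)+\mathcal G_*(\mu)$ applied to the sequence $\bar\nu_{n_i}$ produces a lower bound for $h_\mu(\sigma)+\mathcal G_*(\mu)$ in terms of something that is itself only \emph{bounded above} by $a_{n_i}/n_i$, which is useless for (II). What is actually needed is the sharper statement that the raw integrals satisfy $\limsup_i \tfrac1{n_i}\int g_{n_i}\,d\nu_{n_i}\leq \mathcal G_*(\mu)$ for the \emph{limit} measure $\mu$; this is precisely Lemma~\ref{lem-3} (CFH08, Lemma~2.3), which compares the un-averaged partial Lyapunov sums against the Lyapunov exponent of the limiting measure and does not pass through $\mathcal G_*(\bar\nu_n)$ at all. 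Your Lemma~\ref{lem-2.3.1} cannot substitute for this.

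Second, you assert that weak-$*$ compactness ``should then extract a subsequential limit supported on $X$,'' but this is the most delicate step in the argument and is far from automatic. The measures $\bar\nu_n$ are supported on $\bigcup_{k=0}^{n-1}\sigma^k X^{(n)}$, and $X^{(n)}\supsetneq X$ with no uniform control on how close the shifted sets $\sigma^k X^{(n)}$ are to $X$ for small $k$. The paper's proof devotes a careful argument (adapted from Kenyon--Peres) to this: one shows the support of the limit measure $\mu$ lies in $\bigcup_{k\geq0}\sigma^{-k}X$ by a two-case dichotomy on whether the approximating shift amounts $k(N)$ are bounded, and then eliminates $\bigcup_{k\geq1}\sigma^{-k}X\setminus X$ by observing that $(\sigma^{-1}X)\setminus X$ is wandering under $\sigma^{-1}$ and hence $\mu$-null. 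Without this step, the variational principle on $X$ cannot be invoked to close the proof, so this is a real gap rather than a routine detail.

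Your initial cylinder-formula detour for (II) is a reasonable sanity check but, as you correctly identify, runs aground on the $X$ versus $X^{(n)}$ discrepancy; the measure-theoretic route is the right one, but both of the obstacles above must be addressed.
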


\begin{rem}
{\rm Instead of \eqref{e-t3}, it was proved in \cite[Proposition 2.2]{BanCaoHu10} that $$P(X,\sigma, \mathcal G)=\lim_{n\to \infty}\frac{1}{n} P\left(X,\sigma^n, g_n\right)$$ under a more general setting. We remark that the proof of \eqref{e-t3} is more subtle.}
\end{rem}

To prove the Proposistion \ref{pro-1}, we need the following.
\begin{lem}[\cite{CFH08}, Lemma 2.3]
\label{lem-3}  Under the assumptions of Proposition \ref{pro-1}, suppose that $\{\nu_n\}_{n=1}^\infty$ is a sequence in
$\M\left(\mathcal A^{\Bbb N}\right)$, where $\M({\mathcal A}^{\Bbb N})$ denotes the space of all Borel probability measures on $\mathcal A^{\Bbb N}$ with the weak* topology.
 We form the new sequence $\{\mu_n\}_{n=1}^\infty$ by
$\mu_n=\frac 1n \sum_{i=0}^{n-1}\nu_n\circ \sigma^{-i}$. Assume that
$\mu_{n_i}$ converges to $\mu$ in $\M({\mathcal A}^{\Bbb N})$ for some subsequence
$\{n_i\}$ of natural numbers. Then $\mu\in \M({\mathcal A}^{\Bbb N}, \sigma)$, and moreover
$$
\limsup_{i\to\infty}\frac{1}{n_i}\int
g_{n_i}(x)\;d\nu_{n_i}(x)\leq \mathcal G_*(\mu):=\lim_{n\to\infty}\frac{1}{n}\int g_n\;d\mu.
$$
\end{lem}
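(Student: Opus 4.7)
The statement splits into two parts: (1) the weak$^*$ limit $\mu$ belongs to $\mathcal M(\mathcal A^{\N},\sigma)$, and (2) the limsup inequality. Part (1) is the standard Krylov--Bogolyubov averaging. For any $\varphi\in C(\mathcal A^{\N})$, telescoping the push-forwards in the definition of $\mu_n$ gives
$$
\int(\varphi\circ\sigma-\varphi)\,d\mu_n \;=\; \frac{1}{n}\left(\int\varphi\circ\sigma^n\,d\nu_n-\int\varphi\,d\nu_n\right),
$$
which is $O(\|\varphi\|_\infty/n)$, so passing to the subsequence gives $\int\varphi\circ\sigma\,d\mu=\int\varphi\,d\mu$ and hence $\sigma$-invariance of $\mu$. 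I would dispatch this in a few lines.

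The serious content is (2). My plan is to fix $k\in\N$, decompose the orbit $x,\sigma x,\ldots,\sigma^{n-1}x$ into blocks of length $k$ starting at every residue $s\in\{0,\ldots,k-1\}$, and apply subadditivity~\eqref{090} on each: writing $n-s=q_s k+r_s$ with $0\le r_s<k$, I obtain
$$
g_n(x) \;\le\; g_s(x)+\sum_{j=0}^{q_s-1}g_k\bigl(\sigma^{s+jk}x\bigr)+g_{r_s}\bigl(\sigma^{s+q_s k}x\bigr).
$$
Summing over $s=0,\ldots,k-1$, the key combinatorial observation is that the indices $\{s+jk:\,0\le s<k,\ 0\le j<q_s\}$ are disjoint arithmetic progressions of step $k$ with distinct residues, and together they exactly exhaust $\{0,1,\ldots,n-k\}$. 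Integrating against $\nu_n$ and dividing by $kn$ I arrive at
$$
\frac{1}{n}\int g_n\,d\nu_n \;\le\; \frac{1}{k}\int g_k\,d\mu_n + R_{n,k},
$$
where $R_{n,k}$ collects the leading terms $\frac{1}{kn}\sum_s\int g_s\,d\nu_n$, the tail terms $\frac{1}{kn}\sum_s\int g_{r_s}\circ\sigma^{s+q_sk}\,d\nu_n$, and the correction for the $k-1$ shifts $i\in\{n-k+1,\ldots,n-1\}$ not captured by $\frac{1}{k}\int g_k\,d\mu_n$. Each piece is bounded by a constant times $\max_{0\le j\le k}\|g_j\|_\infty/n$, which tends to $0$ as $n\to\infty$ for \emph{fixed} $k$. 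Letting $n=n_i\to\infty$ and invoking continuity of $g_k$ with $\mu_{n_i}\to\mu$ weakly yields $\limsup_i n_i^{-1}\int g_{n_i}\,d\nu_{n_i}\le k^{-1}\int g_k\,d\mu$; sending $k\to\infty$ then produces the desired bound $\mathcal G_*(\mu)$ via the definition~\eqref{e-N1}.

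The main obstacle I expect is the careful bookkeeping in the block decomposition: both verifying the partition identity above and confirming that every contribution in $R_{n,k}$ is genuinely $o(1)$ as $n\to\infty$. A second subtle point is the order of limits: because $\|g_k\|_\infty$ may grow with $k$, one \emph{must} first let $n\to\infty$ with $k$ fixed and only afterwards let $k\to\infty$, as otherwise the boundary terms need not vanish. Once these two points are handled, the weak$^*$ convergence $\mu_{n_i}\to\mu$ against the continuous test function $g_k$ plus the existence of $\mathcal G_*(\mu)$ for $\sigma$-invariant $\mu$ close out the argument.
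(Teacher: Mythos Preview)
Your argument is correct. The paper does not supply its own proof of this lemma; it is quoted verbatim as \cite[Lemma~2.3]{CFH08}. The approach you outline --- averaging the block decomposition of $g_n$ over all $k$ residues so that the inner sums reassemble into $\sum_{m=0}^{n-k}g_k\circ\sigma^m$, integrating against $\nu_n$, and then passing to the weak$^*$ limit in $\frac{1}{k}\int g_k\,d\mu_n$ for fixed $k$ before letting $k\to\infty$ --- is precisely the argument given in \cite{CFH08}, and your bookkeeping (the partition identity for $\{s+jk\}$ and the $O(\max_{j\le k}\|g_j\|_\infty/n)$ bound on the boundary terms) is accurate.
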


\begin{proof}[Proof of Proposition \ref{pro-1}]
 We first prove that for each $n\in {\Bbb N}$,
\begin{equation}
\label{e-t1}
P(X, \sigma, \mathcal G)\leq \frac{1}{n}P\left(X^{(n)},\sigma^n, g_n\right).
\end{equation}
To see this, fix $n\in {\Bbb N}$ and let $\mu$ be an equilibrium measure for the potential ${\mathcal G}$. Then
\begin{eqnarray*}
P(X, \sigma, \mathcal G)&=&h_\mu(\sigma)+\mathcal G_*(\mu)\\
&\leq & h_\mu(\sigma)+\frac{1}{n}\int g_n\; d\mu\qquad\mbox{ (by \eqref{e-N1})}\\
&=&\frac{1}{n}\left( h_\mu(\sigma^n)+\int g_n\; d\mu\right)\\
&\leq & \frac{1}{n}P\left(X^{(n)},\sigma^n, g_n\right),
\end{eqnarray*}
where in the last inequality, we use the fact that $\mu\in \mathcal M(X^{(n)}, \sigma^n)$ and the classical variational principle for the topological pressure of additive potentials. This proves \eqref{e-t1}.

In what follows we prove that
\begin{equation}
\label{e-t2}
P(X, \sigma, \mathcal G)\geq \limsup_{n\to \infty}\frac{1}{n}P\left(X^{(n)},\sigma^n, g_n\right).
\end{equation}
Clearly \eqref{e-t1} and \eqref{e-t2} imply \eqref{e-t3}. To prove \eqref{e-t2}, by the classical variational principle we can take a subsequence $\{n_i\}$ of natural numbers and $\nu_{n_i}\in \mathcal M\left(X^{(n_i)}, \sigma^{n_i}\right)$ such that
\begin{equation}\label{e-t4}
\limsup_{n\to \infty}\frac{1}{n}P\left(X^{(n)},\sigma^n, g_n\right)=\lim_{i\to \infty}\frac{1}{n_i} \left(h_{\nu_{n_i}}(\sigma^{n_i})+\int  g_{n_i}\; d\nu_{n_i}\right).
\end{equation}
Set $\mu^{(i)}=\frac{1}{n_i}\sum_{k=0}^{n_i-1}\nu_{n_i}\circ \sigma^{-k}$ for each $i$. Taking a subsequence if necessary we may assume that $\mu^{(i)}$ converges  to an element $\mu\in \mathcal M\left(\mathcal A^{\Bbb N}\right)$ in the weak* topology. By Lemma \ref{lem-3}, $\mu\in \M({\mathcal A}^{\Bbb N}, \sigma)$ and moreover
\begin{equation}
\label{e-t5}
\limsup_{i\to\infty}\frac{1}{n_i}\int
g_{n_i}(x)\;d\nu_{n_i}(x)\leq \mathcal G_*(\mu).
\end{equation}
Next we show further that $\mu$ is supported on $X$.  For this  we adopt some arguments from the proof of \cite[Theorem 1.1]{KenyonPeres96}. Notice that for each $i$, $\mu^{(i)}$ is $\sigma$-invariant supported on
$$
\bigcup_{k=0}^{n_i-1}\sigma^k X^{(n_i)}=\bigcup_{k=1}^{n_i}\sigma^{n_i-k} X^{(n_i)}.
$$
Hence $\mu$ is supported on
$$
\bigcap_{N=1}^\infty \overline{ \bigcup_{i=N}^\infty \bigcup_{k=1}^{n_i} \sigma^{n_i-k} X^{(n_i)}}.
$$
If $x$ is in this set, then for each $N\geq 1$ there exist integers $i(N)\geq N$ and $k(N)\in [1, n_{i(N)}]$ for which $d(x, \sigma^{n_{i(N)}-k(N)}X^{(n_{i(N)})})<1/N$, hence
\begin{equation}
\label{e-t6}
d(x, X)< \frac{1}{N}+ \sup_{y\in \sigma^{n_{i(N)}-k(N)}X^{(n_{i(N)})} } d(y, X)  \leq \frac{1}{N}+2^{-k(N)}
\end{equation}
and
\begin{equation}
\label{e-t7}
d(\sigma^{k(N)}x, X)<\frac{2^{k(N)}}{N}+\sup_{y\in \sigma^{n_{i(N)}-k(N)}X^{(n_{i(N)})} } d(\sigma^{k(N)}y, X)\leq \frac{2^{k(N)}}{N}+2^{-n_{i(N)}}.
\end{equation}
If the values $k(N)$ are unbounded as $N\to \infty$, then \eqref{e-t6} yields $x\in X$ while if they are bounded then some value of $k$ recurs infinitely often as $k(N)$ which implies that $\sigma^kx\in X$ by \eqref{e-t7}. Thus $\mu$ is supported on
$$
\bigcup_{k=0}^\infty\sigma^{-k}X.
$$
Since $\sigma X\subset X$, the set $(\sigma^{-1} X)\backslash X $ is wandering under $\sigma^{-1}$ (i.e. its preimages under powers of $\sigma$ are disjoint), so it must have zero $\mu$-measure. Consequently, $\mu\in \mathcal M(X,\sigma)$.

Notice that $h_{\mu^{(i)}}(\sigma)=\frac{1}{n_i}h_{\nu_{n_i}}(\sigma^{n_i})$ (see Lemma \ref{lem-A1}). By the upper semi-continuity of the entropy map,
$$
h_\mu(\sigma)\geq \limsup_{i\to \infty}h_{\mu^{(i)}}(\sigma)=\limsup_{i\to \infty}\frac{1}{n_i}h_{\nu_{n_i}}(\sigma^{n_i}),
$$
which, together with \eqref{e-t5}, yields that
\begin{eqnarray*}
h_\mu(\sigma)+\mathcal G_*(\mu)&\geq& \limsup_{i\to \infty}\frac{1}{n_i} \left(h_{\nu_{n_i}}(\sigma^{n_i})+\int g_{n_i} \; d\nu_{n_i}\right)\\
&=&\limsup_{n\to \infty}\frac{1}{n}P\left(X^{(n)},\sigma^n, g_n\right).
\end{eqnarray*}
Applying Theorem \ref{thm:sub-additive-VP}, we obtain \eqref{e-t2}. This completes the proof of the proposition.
\end{proof}

Next we present another auxiliary  result.

\begin{pro}
\label{pro-2.4}
Let $(X,\sigma)$ be a one-sided subshift over a finite alphabet ${\mathcal A}$ and $g, h\in C(X)$. Assume in addition that $h(x)<0$ for all $x\in X$. Let
$$r_0=\sup_{x\in X}\exp(h(x)).$$
Set for $0<r<r_0$,
$$
\mathcal A_r:=\left\{i_1\ldots i_n\in X^*:\; \sup_{x\in [i_1\ldots i_n]\cap X}\exp(S_nh(x))<r\leq \sup_{y\in [i_1\ldots i_{n-1}]\cap X}\exp(S_{n-1}h(y))\right\},
$$
where $X^*$ is the collection  of finite words allowed in $X$ and $S_nh(x):=\sum_{k=0}^{n-1} h(\sigma^kx)$.
Then
\begin{equation}
\label{e-4''}
\lim_{r\to 0}\frac{\log \left( \sum_{I\in \mathcal A_r}\sup_{x\in [I]\cap X} \exp(S_{|I|}g(x)) \right)}{\log r}=-t,
\end{equation}
where   $t$ is the unique real number so that $P(X,\sigma, g+th)=0$, and $|I|$ stands for the length of $I$.
\end{pro}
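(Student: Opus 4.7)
Set $P(s) := P(X,\sigma, g+sh)$; by standard properties of pressure and the hypothesis $h < 0$, the function $P$ is convex and strictly decreasing, with unique zero at $s=t$. Write $-M := \inf_X h$ and $-m := \sup_X h$, so $0 < m \le M$, and let $\psi_n(I) := \sup_{x\in[I]\cap X} e^{S_nh(x)}$ and $\phi_n(I) := \inf_{x\in[I]\cap X} e^{S_nh(x)}$. The plan is to sandwich
$$F(r) \;:=\; \sum_{I\in\mathcal{A}_r}\sup_{x\in[I]\cap X} e^{S_{|I|}g(x)}$$
between two expressions of the form $r^{-s(1+o(1))}\,\Sigma_s(r)$, where $\Sigma_s(r) := \sum_{I\in\mathcal{A}_r}\sup_{x\in[I]\cap X} e^{S_{|I|}(g+sh)(x)}$, and then control $\Sigma_s(r)$ through the pressure.

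First I record two preparatory facts. (i) Since $\psi_n(I) \in [e^{-Mn}, e^{-mn}]$, every $I\in\mathcal{A}_r$ satisfies $\log(1/r)/M - 1 \le |I| \le \log(1/r)/m + 1$, so $|I| = O(\log(1/r))$. (ii) Uniform continuity of $h$ together with $d(\sigma^kx,\sigma^ky)\le 2^{-(n-k)}$ for $x,y\in[I]\cap X$ with $|I|=n$ gives the \emph{weak distortion estimate}: for every $\varepsilon>0$ there exists $C_\varepsilon$ with $\psi_n(I) \le e^{\varepsilon n + C_\varepsilon}\phi_n(I)$ on every cylinder. Chaining (ii) with the elementary $\psi_n(I) \ge e^{-M}\phi_{n-1}(I')$ (valid because $[I]\cap X \subset [I']\cap X$ and $S_nh \ge S_{n-1}h - M$) and the defining inequality $\psi_{n-1}(I') \ge r$, and absorbing the $e^{\varepsilon n}$ factor using (i), yields that for every $\varepsilon>0$ there is $c_\varepsilon>0$ with
$$c_\varepsilon\, r^{1+\varepsilon} \;\le\; \phi_{|I|}(I) \;\le\; \psi_{|I|}(I) \;\le\; r \qquad\text{for every } I\in\mathcal{A}_r.$$

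Next, from $e^{S_n(g+sh)(x)} = e^{S_ng(x)}\cdot e^{sS_nh(x)}$ and the standard sup/inf inequalities I obtain, for every $s\ge 0$ and every $\varepsilon>0$,
$$r^{-s}\,\Sigma_s(r) \;\le\; F(r) \;\le\; c_\varepsilon^{-s}\, r^{-s(1+\varepsilon)}\,\Sigma_s(r);$$
the case $s<0$ is analogous with the roles of $\psi$ and $\phi$ interchanged. For the upper bound on $F(r)$ I take $s>t$, so $P(s)<0$. Writing $B_n^{(s)} := \sum_{I\in X_n^*}\sup_{x\in[I]\cap X} e^{S_n(g+sh)(x)}$, the definition \eqref{e-Falconer} gives $B_n^{(s)} \le e^{n(P(s)+\varepsilon')}$ for $n$ large; summing the resulting geometric series over the length range produces $\Sigma_s(r) \le C\,r^{-(P(s)+\varepsilon')/M}$. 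Substituting and using that $s+P(s)/M$ is non-decreasing in $s$ with value $t$ at $s=t$, then letting $s\downarrow t$ and $\varepsilon,\varepsilon'\downarrow 0$, yields $\limsup_{r\to 0}\log F(r)/\log(1/r) \le t$.

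For the matching lower bound I take $s<t$, so $P(s)>0$. The key step is a \emph{refinement argument}: for $n$ just larger than $\log(1/r)/m + 1$ every $I\in X_n^*$ has a unique prefix $J\in\mathcal{A}_r$. Writing $I = JK$ and using $S_n(g+sh) = S_{|J|}(g+sh) + S_{n-|J|}(g+sh)\circ \sigma^{|J|}$ gives
$$B_n^{(s)} \;\le\; \sum_{J\in\mathcal{A}_r}\sup_{x\in[J]\cap X} e^{S_{|J|}(g+sh)(x)} \cdot B_{n-|J|}^{(s)}.$$
Inserting the lower estimate $B_n^{(s)} \ge e^{n(P(s)-\varepsilon')}$ on the left and the upper estimate $B_{n-|J|}^{(s)} \le e^{(n-|J|)(P(s)+\varepsilon')}$ on the right (valid once $n-|J|$ is sufficiently large) and using $|J| \le \log(1/r)/m + 1$ extracts $\Sigma_s(r) \ge c\,r^{-P(s)/M + O(\varepsilon')}$. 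Combining with $F(r)\ge r^{-s}\Sigma_s(r)$ and letting $s\uparrow t$ gives $\liminf_{r\to 0}\log F(r)/\log(1/r) \ge t$; the case $t\le 0$ is handled identically via the $s<0$ branch of the sandwich. The main technical obstacle is that for merely continuous $h$ the distortion estimate is only \emph{weak}, producing the $r^\varepsilon$ slack in the bound on $\phi_{|I|}(I)$; this is tolerated because $|I| = O(\log(1/r))$, so the slack contributes only an $O(\varepsilon)$ error in the final exponent, which can then be driven to zero.
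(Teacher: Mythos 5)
Your proof is correct in its main lines and reaches the same conclusion, but it takes a genuinely different route for the lower bound. Both arguments share the same skeleton for the upper estimate: bound $|I|$ for $I\in\mathcal A_r$ between multiples of $\log(1/r)$, absorb the continuity modulus of $h$ into an $r^\varepsilon$ slack, and then sum $B_n^{(s)}:=\sum_{I\in X_n^*}\sup_{[I]\cap X}e^{S_n(g+sh)}$ over the length window using Falconer's formula \eqref{e-Falconer}. The paper, however, works exclusively at $s=t$: it passes from $\Theta_r$ to $\Gamma_r=\Sigma_t(r)$ via the sandwich, and then proves $r^{\epsilon/2}\le\Gamma_r\le r^{-\epsilon/2}$ directly, invoking Bowen's Lemma 2.14 (``if a partition sum with weight $e^{\lambda|I|}$ is $<1$, then $P\le -\lambda$'') to get the lower bound by contradiction. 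You instead keep $s$ as a free parameter, show $\limsup\log F(r)/\log(1/r)\le s+P(s)/M$ for $s>t$ and $\liminf\ge s+P(s)/M$ for $s<t$ via a refinement/decomposition argument $B_n^{(s)}\le\sum_{J\in\mathcal A_r}(\cdot)\,B_{n-|J|}^{(s)}$, and then optimize over $s$ using monotonicity of $s\mapsto s+P(s)/M$. This is more elementary (no external Bowen lemma) at the cost of the extra limiting process in $s$; the paper's route is slicker because the Bowen criterion delivers the lower bound for $\Gamma_r$ in one stroke.

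One technical point you should tighten: in the refinement step you take ``$n$ just larger than $\log(1/r)/m+1$,'' but then for $J\in\mathcal A_r$ with $|J|$ close to the maximum the gap $n-|J|$ can be as small as $1$, where the bound $B_{n-|J|}^{(s)}\le e^{(n-|J|)(P(s)+\varepsilon')}$ is not yet in force. The clean fix is to take $n\asymp 2M(r)$ (say), so that $n-|J|\ge m(r)\to\infty$ uniformly over $J\in\mathcal A_r$; since $n$ is still $O(\log(1/r))$, the resulting extra factor $e^{-n\varepsilon'}$ still contributes only $O(\varepsilon')$ to the final exponent. You flag the condition (``valid once $n-|J|$ is sufficiently large'') but do not say how to enforce it, so this deserves a sentence. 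Everything else — the weak distortion bound, the sandwich for $s\ge 0$ and its mirror for $s<0$, the monotonicity of $s+P(s)/M$ via $\int h\ge -M$, and the summation over the length window — is sound.
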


To prove the above result, we need the following.
\begin{lem}
\label{lem-simple}
Let $(X,\sigma)$ be a one-sided subshift over a finite alphabet ${\mathcal A}$  and $f\in C(X)$.
Then
$$
\lim_{n\to \infty} \frac{1}{n} \sup\{|S_nf(x)-S_nf(y)|:\; x_i=y_i \mbox{ for all } 1\leq i\leq n\}=0.
$$
\end{lem}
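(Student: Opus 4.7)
The plan is to exploit the uniform continuity of $f$ on the compact space $X$ together with a Cesàro averaging argument. Let $\omega_f(\delta) := \sup\{|f(u)-f(v)| : u,v\in X,\ d(u,v)\leq \delta\}$ denote the modulus of continuity of $f$. Since $X$ is compact and $f$ is continuous, $f$ is uniformly continuous, so $\omega_f(\delta)\to 0$ as $\delta\to 0^+$.

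Now suppose $x,y\in X$ satisfy $x_i=y_i$ for all $1\leq i\leq n$. For each $0\leq k\leq n-1$, the sequences $\sigma^k x$ and $\sigma^k y$ agree in their first $n-k$ coordinates, and therefore, by the definition of the metric on $\Sigma$ recalled in Section~\ref{S-2}, one has $d(\sigma^k x,\sigma^k y)\leq 2^{-(n-k)}$. Consequently,
$$
|S_nf(x)-S_nf(y)|\leq \sum_{k=0}^{n-1}|f(\sigma^kx)-f(\sigma^ky)|\leq \sum_{k=0}^{n-1}\omega_f(2^{-(n-k)})=\sum_{j=1}^{n}\omega_f(2^{-j}).
$$
Taking the supremum over all admissible pairs $x,y$ gives
$$
\frac{1}{n}\sup\{|S_nf(x)-S_nf(y)|:x_i=y_i,\ 1\leq i\leq n\}\leq \frac{1}{n}\sum_{j=1}^{n}\omega_f(2^{-j}).
$$

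It remains to observe that the right-hand side tends to $0$ as $n\to\infty$. This is a direct application of the Cesàro mean theorem: since $\omega_f(2^{-j})\to 0$ as $j\to\infty$, the arithmetic averages $\frac{1}{n}\sum_{j=1}^{n}\omega_f(2^{-j})$ also tend to $0$. This establishes the lemma.

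There is no real obstacle here; the argument is a routine combination of uniform continuity, the explicit shape of the shift metric, and Cesàro averaging. The only point requiring a little care is matching the indices so that the distances $d(\sigma^k x,\sigma^k y)$ decrease geometrically as $k$ varies over $0,1,\ldots,n-1$, which is what permits the dominating sum $\sum_{j=1}^{n}\omega_f(2^{-j})$ to be written independently of $x,y$.
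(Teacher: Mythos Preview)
Your proof is correct and is essentially the same as the paper's own argument. The paper defines $\mathrm{var}_j f=\sup\{|f(x)-f(y)|:x_i=y_i,\ 1\le i\le j\}$, observes $\mathrm{var}_j f\to 0$ by uniform continuity, and bounds the quantity by $\sum_{i=1}^n \mathrm{var}_i f$; your $\omega_f(2^{-j})$ is precisely $\mathrm{var}_j f$ in the shift metric, so the two proofs coincide up to notation.
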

\begin{proof} The result is well-known. For the reader's convenience, we include a proof.

Define for $n\in \N$,
$$\mbox{var}_nf=\sup\{|f(x)-f(y)|:\; x_i=y_i \mbox{ for all } 1\leq i\leq n\}.$$
Since $f$ is uniformly continuous, $\mbox{var}_nf\to 0$ as $n\to \infty$. It follows that $$\lim_{n\to \infty} \frac{1}{n}\sum_{i=1}^n \mbox{var}_if=0.$$
This concludes the result of the lemma since $$\sup\{|S_nf(x)-S_nf(y)|:\; x_i=y_i \mbox{ for all } 1\leq i\leq n\}$$ is bounded above by $\sum_{i=1}^n \mbox{var}_if$.
\end{proof}
\begin{proof}[Proof of Proposition \ref{pro-2.4}] Set
$$
\Theta_r=\sum_{I\in \mathcal A_r}\sup_{x\in [I]\cap X} \exp(S_{|I|}g(x)),\quad r\in (0, r_0).
$$
Let $\epsilon>0$. It is enough to show that
\begin{equation}
\label{e-eq2}
r^{-t+\epsilon}\leq \Theta_r\leq r^{-t-\epsilon}
\end{equation} for sufficiently small $r$.

To this end, set for $0<r<r_0$,
$$
m(r)=\min\{|I|:\; I\in \mathcal A_r\}, \quad M(r)=\max\{|I|:\; I\in \mathcal A_r\}.
$$
From the definition of $\mathcal A_r$ and the negativity of $h$, it follows that there exist two positive constants $a, b$ such that
\begin{equation}
\label{e-eq1}
a\log(1/r)\leq m(r) \leq M(r)\leq b \log (1/r),\quad \forall r\in (0, r_0).
\end{equation}
Define
$$
\Gamma_r=\sum_{I\in \mathcal A_r}\sup_{x\in [I]\cap X} \exp(S_{|I|}(g+th)(x)),\quad r\in (0, r_0).
$$
By Lemma  \ref{lem-simple} and \eqref{e-eq1},  it is readily checked that
\begin{equation}\label{e-eq3}
 r^{t+\epsilon/2} \Theta_r\leq \Gamma_r\leq  r^{t-\epsilon/2} \Theta_r \; \mbox{ for sufficiently small }r,
\end{equation}
Hence to prove \eqref{e-eq2}, it suffices to prove that $r^{\epsilon/2}\leq  \Gamma_r\leq r^{-\epsilon/2}$ for small $r$.

We first prove $\Gamma_r>r^{\epsilon/2}$ when $r$ is small. Suppose on the contrary this is not true. Then by \eqref{e-eq1} we can find some $r>0$ and $\lambda>0$ such that $Z(r, \lambda)<1$, where
\begin{equation}
\label{e-eq4}
Z(r, \lambda):=\sum_{I\in \mathcal A_r}\exp(\lambda |I|) \sup_{x\in [I]\cap X} \exp(S_{|I|}(g+th)(x)).
\end{equation}
From Lemma 2.14 of \cite{Bowen08} it follows that $P(X, \sigma,  g+th)\leq -\lambda$, contradicting the fact that $P(X, \sigma,  g+th)=0$.  Hence we have $\Gamma_r>r^{\epsilon/2}$ when $r$ is sufficiently small.

Next we prove the inequality $\Gamma_r\leq r^{-\epsilon/2}$ for small $r$. To do this, fix $\lambda\in (0, \epsilon/(2b))$, where $b$ is the constant in \eqref{e-eq1}. We claim that there exists $0<r_1<r_0$ such that
\begin{equation}
\label{e-eq5}
Z(r, -\lambda)< 1 \mbox{ for all } r\in (0, r_1),
\end{equation}
where $Z$ is defined as in \eqref{e-eq4}. Since   $\lambda\in (0, \epsilon/(2b))$, it follows from \eqref{e-eq1} that $\exp(-\lambda|I|)\geq r^{\epsilon / 2}$ for any $I\in \mathcal A_r$. Hence \eqref{e-eq5} implies that $\Gamma_r\leq r^{-\epsilon/2}$ for $0<r<r_1$.

Now it remains to prove  \eqref{e-eq5}. Since  $P(X, \sigma,  g+th)=0$, by definition we have
$$
\lim_{n\to \infty} \frac{1}{n}\log \left(\sum_{I\in X^*: \; |I|=n} \sup_{x\in [I]\cap X} \exp(S_{|I|}(g+th)(x))\right)=0.
$$
Hence there exists a large $N$ such that $e^{-\lambda N/2}<1-e^{-\lambda/2}$ and for any $n>N$,
$$
 \gamma_n:=\sum_{I\in X^*: \; |I|=n} \exp(-\lambda |I|) \sup_{x\in [I]\cap X} \exp(S_{|I|}(g+th)(x))\leq \exp(-\lambda n/2).
$$
Take a small $r_1\in (0, r_0)$ so that $m(r)\geq N$ for any $0<r<r_1$. Hence by \eqref{e-eq1}, for any $0<r<r_1$, $\mathcal A_r\subset \{I\in X^*:\;|I|\geq N\}$ and so
$$
Z(r, -\lambda)\leq \sum_{n=N}^\infty \gamma_n\leq \sum_{n=N}^\infty \exp(-\lambda n/2)=\frac{e^{-\lambda N/2}}{1-e^{-\lambda/2}}<1.
$$
This proves \eqref{e-eq5} and we are done.
\end{proof}

\section{The proof of Theorem \ref{thm-1}}
\label{S-4}

Recall that for $T\in \R^{d\times d}$, $\alpha_1(T)\geq \cdots\geq \alpha_d(T)$ are the singular values of $T$, and $\phi^s(T)$ ($s\geq 0$) is defined as in \eqref{e-singular}.      We begin with an elementary but important lemma.
\begin{lem}
\label{lem-1}
Let $E\subset U\subset \R^d$, where $E$ is compact and $U$ is open. Let $k\in \{0,1,\ldots, d-1\}$. Then for any non-degenerate $C^1$ map $f:\; U\to \R^d$, there exists $r_0>0$ so that for any $y\in E$, $z\in B(y, r_0)$ and $0<r<r_0$,  the set $f(B(z, r))$ can be covered by
$$
(2d)^d\cdot\frac{\phi^k(D_yf)}{(\alpha_{k+1}(D_yf))^k}
$$
many balls of radius $\alpha_{k+1}(D_yf)r$.
\end{lem}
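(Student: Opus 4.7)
The plan is to linearize $f$ near each base point $y\in E$ using its differential $D_yf$, and to exploit the fact that $D_yf\bigl(B(0,r)\bigr)$ is an ellipsoid with semi-axes $\alpha_1(D_yf)r,\ldots,\alpha_d(D_yf)r$. Such an ellipsoid admits an obvious axis-aligned box cover, and the claimed count $(2d)^d\phi^k(D_yf)/\alpha_{k+1}(D_yf)^k$ will fall out of counting cubes in the box. The heart of the matter is to control the non-linear error $f(w)-f(z)-D_yf(w-z)$ uniformly in $y\in E$.

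First I would replace $U$ by a compact neighborhood $\widetilde E\subset U$ of $E$. Since $Df$ is uniformly continuous on $\widetilde E$ and $E$ is compact, I can, for any prescribed $\epsilon>0$, choose $r_0>0$ so that $B(y,2r_0)\subset\widetilde E$ and $\|D_\xi f-D_yf\|<\epsilon$ for all $y\in E$ and all $\xi\in B(y,2r_0)$. Because $f$ is non-degenerate, $\alpha_d(D_yf)$ is a positive continuous function of $y$; compactness of $E$ then gives a uniform lower bound $\alpha_d(D_yf)\ge c>0$, and hence $\alpha_{k+1}(D_yf)\ge c$ for every $y\in E$. I can therefore fix $\epsilon$ small in terms of $c$ and $d$ alone, and $r_0$ uniformly in $y$.

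Next, for $y\in E$, $z\in B(y,r_0)$, $r<r_0$ and $w\in B(z,r)$, the $C^1$ fundamental theorem of calculus gives $f(w)-f(z)-D_yf(w-z)=\int_0^1\bigl(D_{z+t(w-z)}f-D_yf\bigr)(w-z)\,dt$, whose norm is at most $\epsilon r$. Therefore
\[
  f(B(z,r))\subset f(z)+D_yf\bigl(B(0,r)\bigr)+B(0,\epsilon r).
\]
Diagonalising $D_yf$ by its singular value decomposition (a choice of orthonormal frame on source and target, and so preserving Euclidean balls), the right-hand side lies inside the axis-aligned box whose $i$-th half-side is $(\alpha_i(D_yf)+\epsilon)\,r$.

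Finally I would tile this box by cubes of side length $2\alpha_{k+1}(D_yf)r/\sqrt d$, each of which is inscribed in a Euclidean ball of radius $\alpha_{k+1}(D_yf)r$. The number of tiles along axis $i$ is at most $\lceil\sqrt d(\alpha_i(D_yf)+\epsilon)/\alpha_{k+1}(D_yf)\rceil$. Having chosen $\epsilon$ small compared with $c$, this is bounded by $2\sqrt d\,\alpha_i(D_yf)/\alpha_{k+1}(D_yf)$ for $i\le k$ (where $\alpha_i\ge\alpha_{k+1}$) and by $2\sqrt d$ for $i>k$. Taking the product over $i=1,\ldots,d$ gives $(2\sqrt d)^d\phi^k(D_yf)/\alpha_{k+1}(D_yf)^k\le(2d)^d\phi^k(D_yf)/\alpha_{k+1}(D_yf)^k$, which is the desired estimate. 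The main obstacle, and the only place the hypotheses are really used, is obtaining the uniform-in-$y$ choice of $r_0$ and $\epsilon$; this is precisely where the compactness of $E$ and the non-degeneracy of $f$ enter.
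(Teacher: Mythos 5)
Your proposal is correct and follows essentially the same route as the paper: control the nonlinear error uniformly over $E$ using compactness and the uniform lower bound on $\alpha_d(D_yf)$ coming from non-degeneracy, show that $f(B(z,r))$ sits inside a box aligned with the singular directions of $D_yf$ with side lengths comparable to $\alpha_i(D_yf)r$, and then count axis-aligned cubes of the right size. The only cosmetic differences are that the paper works with a multiplicative error (choosing $\epsilon=(\sqrt d-1)/2$ so the containment becomes $f(B(z,r))\subset f(z)+D_yf\bigl(B(0,\sqrt d\,r)\bigr)$, an exact ellipsoid dilated by $\sqrt d$), while you keep an additive error and Minkowski-sum with a small ball; both lead to the same cube count and to the same constant $(2d)^d$.
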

\begin{proof}
The result was implicitly proved in \cite[Lemma 3]{Zhang97} by using an idea of \cite{DouadyOesterle80}.
For the convenience of  the reader, we provide a detailed proof.

Set
$$
\gamma=\min_{y\in E}\alpha_d(D_yf).
$$
Then
\begin{equation}
\label{e-1-3}
B(0, \gamma)\leq  D_yf(B(0,1)) \mbox{ for all }y\in E.
\end{equation}
Since $f$ is $C^1$, non-degenerate on $U$ and $E$ is compact, it follows that $\gamma>0$.   Take $\epsilon=(\sqrt{d}-1)/2$. Then there exists a small $r_0>0$ such that for $u, v, w\in V_{2r_0}(E):=\{x: d(x, E)<2r_0\}$,
\begin{equation}
\label{e-1-1}
|f(u)-f(v)-D_vf(u-v)|\leq \epsilon \gamma |u-v| \quad \mbox{ if }|u-v|\leq r_0,
\end{equation}
and
\begin{equation}
\label{e-1-2}
D_vf(B(0,1))\subset ((1+\epsilon)D_wf)(B(0,1))\quad \mbox{ if }|v-w|\leq r_0.
\end{equation}

Now let $y\in E$ and $z\in B(y, r_0)$. For any $0<r<r_0$ and $x\in B(z, r)$,  taking $u=x$ and $v=z$ in \eqref{e-1-1}  gives
\begin{eqnarray*}
f(x)-f(z)-D_zf(x-z)\in B(0, \epsilon \gamma r),
\end{eqnarray*}
so by \eqref{e-1-2} and \eqref{e-1-3},
\begin{eqnarray*}
f(x)-f(z) &\in & D_zf(B(0,r))+B(0, \epsilon\gamma r)\\
&\mbox{}&\quad\subset  ((1+\epsilon) D_yf)(B(0,r))+B(0, \epsilon \gamma r) \qquad (\mbox{by \eqref{e-1-2}})\\
 &\mbox{}&\quad\subset   D_yf\left(B(0,(1+\epsilon)r))+ B(0,\epsilon r)\right) \;\qquad (\mbox{by \eqref{e-1-3}})\\
 &\mbox{}&\quad\subset   D_yf(B(0,(1+2\epsilon)r))\\
 &\mbox{}&\quad = D_yf\left(B\left(0,\sqrt{d}r\right)\right),
\end{eqnarray*}
where $A+A':=\{u+v:\; u\in A, \; v\in A'\}$.    Therefore  $$f(B(z, r))\subset f(z)+ D_yf\left(B\left(0,\sqrt{d}r\right)\right).$$
That is, $f(B(z, r))$ is contained in an ellipsoid which has principle axes of lengths  $2\sqrt{d}\alpha_i(D_yf)r$, $i=1,\ldots, d$.  Hence  $f(B(z, r))$ is contained in a rectangular parallelepiped of side lengths  $2\sqrt{d}\alpha_i(D_yf)r$, $i=1,\ldots, d$.   Now we can divide
such a parallelepiped into at most
\begin{eqnarray*}
\left(\prod_{i=1}^{k+1}\frac{2d\alpha_i(D_yf)}{\alpha_{k+1}(D_yf)}\right)\cdot (2d)^{d-k-1}
 \leq  (2d)^d \cdot\frac{\phi^k(D_yf)}{(\alpha_{k+1}(D_yf))^k}
\end{eqnarray*}
cubes of side $\frac{2}{\sqrt{d}}\cdot\alpha_{k+1}(D_yf)r$. Therefore this parallelepiped (and $f(B(z, r))$ as well) can be covered by $$(2d)^d \cdot\frac{\phi^k(D_yf)}{(\alpha_{k+1}(D_yf))^k}$$ balls of radius $\alpha_{k+1}(D_yf)r$.
\end{proof}

In the remaining part of this section, let $\{f_i\}_{i=1}^\ell$ be a $C^1$ IFS on $\R^d$ with attractor $K$. Let $(\Sigma,\sigma)$ be the one-sided full shift over the alphabet $\{1,\ldots, \ell\}$ and  $\Pi: \Sigma\to K$ the canonical coding map associated with the IFS (cf.~\eqref{e-coding}). As a consequence of  Lemma \ref{lem-1}, we obtain the following.

\begin{pro}
\label{pro-2}
Let $k\in \{0,1,\ldots, d-1\}$. Set $C=(2d)^d$. Then there exists $C_1>0$ such that for  $\mathbf{i}=(i_p)_{p=1}^\infty\in \Sigma$ and  $n\in \N$, the set $f_{\mathbf{i}|n}(K)$ can be covered by
 $C_1\prod_{p=0}^{n-1}G(\sigma^p \mathbf{i})$  balls of radius $\prod_{p=0}^{n-1}H(\sigma^p \mathbf{i})$,   where
\begin{equation}
\label{e-t9}
G(\mathbf{i}):=\frac{C\phi^k(D_{\Pi \sigma \mathbf{i}}f_{i_1})}{\alpha_{k+1}(D_{\Pi \sigma \mathbf{i}}f_{i_1})^k},\qquad H(\mathbf{i}):=\alpha_{k+1}(D_{\Pi \sigma \mathbf{i}}f_{i_1}).
\end{equation}
\end{pro}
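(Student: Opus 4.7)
My plan is to apply Lemma \ref{lem-1} iteratively to the maps $f_{i_n}, f_{i_{n-1}}, \ldots, f_{i_1}$, in that reverse order, starting from a finite cover of $K$. At the $p$-th step I apply Lemma \ref{lem-1} to the single map $f_{i_{n-p+1}}$ using the base point $y = y_{n-p+1} := \Pi\sigma^{n-p+1}\mathbf{i} \in K$; these orbit points satisfy the telescoping identity $y_{q-1} = f_{i_q}(y_q)$ by the very definition of $\Pi$. The lemma then multiplies the cover-ball count by $C\phi^k(D_{y_{n-p+1}}f_{i_{n-p+1}})/\alpha_{k+1}(D_{y_{n-p+1}}f_{i_{n-p+1}})^k = G(\sigma^{n-p}\mathbf{i})$ and contracts the radius by $\alpha_{k+1}(D_{y_{n-p+1}}f_{i_{n-p+1}}) = H(\sigma^{n-p}\mathbf{i})$, so after $n$ such steps the telescoping products give exactly $\prod_{p=0}^{n-1}G(\sigma^p\mathbf{i})$ and $\prod_{p=0}^{n-1}H(\sigma^p\mathbf{i})$.

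Concretely, I would fix a compact neighborhood $E \subset U$ of $K$, let $r_0 > 0$ be a common constant obtained from Lemma \ref{lem-1} applied to each of the finitely many maps $f_i$ with this $E$, and shrink $r_0$ if necessary so that $r_0 \le 1$. By compactness, $K$ is covered by finitely many balls $B(z_j, \eta)$ with $z_j \in K$ and $\eta > 0$ small; call their number $N_0$, which depends only on the IFS. Since $f_{\mathbf{i}|n}(K) \subset \bigcup_j f_{\mathbf{i}|n}(B(z_j, \eta))$, it suffices to apply the recursive cover to each $B(z_j, \eta)$ separately; the resulting joint cover uses at most $N_0\prod_{p=0}^{n-1}G(\sigma^p\mathbf{i})$ balls of radius at most $\eta\prod_{p=0}^{n-1}H(\sigma^p\mathbf{i}) \le \prod_{p=0}^{n-1}H(\sigma^p\mathbf{i})$, and the choice $C_1 := N_0$ completes the proof.

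The main technical obstacle is ensuring the hypotheses of Lemma \ref{lem-1} hold at every step: the cover ball on which we apply $f_{i_{n-p+1}}$ must have radius at most $r_0$ and centre in $B(y_{n-p+1}, r_0)$. The radius condition is automatic, since each step contracts the radius by a factor $\alpha_{k+1}(D_y f_i) \le \rho := \max_i\sup_E\|Df_i\| < 1$. For the centres, a direct inspection of the proof of Lemma \ref{lem-1} shows the new cover-ball centre drifts from $f_{i_{n-p+1}}(\tilde z_{p-1})$ by at most $d\rho\, r_{p-1}$; combined with the $\rho$-Lipschitz bound $|f_{i_{n-p+1}}(\tilde z) - y_{n-p}| \le \rho\,|\tilde z - y_{n-p+1}|$, this gives the recursion $\delta_p \le d\rho\, r_{p-1} + \rho\,\delta_{p-1}$ on the drifts $\delta_p := |\tilde z_p - y_{n-p}|$, whose solution is uniformly dominated by $\rho^p\delta_0 + C(\rho,d)\eta$. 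Since the initial drift $\delta_0 = |z_j - y_n|$ may be as large as $\mathrm{diam}(K) > r_0$, I would handle the first $p_0 := \lceil\log(r_0/(2\,\mathrm{diam}(K)))/\log\rho\rceil$ steps with a non-orbit base such as $y = z_j \in K$, for which the centre hypothesis of Lemma \ref{lem-1} is trivially met, and switch to the orbit base $y = y_{n-p+1}$ only from step $p_0+1$ onward, once $\rho^{p_0}\mathrm{diam}(K) \le r_0/2$. Because $y \mapsto \phi^k(D_y f_i)$ and $y \mapsto \alpha_{k+1}(D_y f_i)$ are continuous and positive on the compact $K$, the discrepancy between the "off-orbit" factors incurred in these $p_0$ stages and the target factors $G(\sigma^q\mathbf{i})$, $H(\sigma^q\mathbf{i})$ is bounded by a single constant independent of $\mathbf{i}$ and $n$, which is absorbed into $C_1$.
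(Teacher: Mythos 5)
Your proposal takes essentially the same approach as the paper: apply Lemma~\ref{lem-1} one map at a time (inside out), use the orbit points $\Pi\sigma^{n-p+1}\mathbf{i}$ as the base points $y$, exploit the telescoping identity $\Pi\sigma^{n-p}\mathbf{i}=f_{i_{n-p+1}}(\Pi\sigma^{n-p+1}\mathbf{i})$, and handle the initial ``drift-in'' phase separately. That core structure is correct and is exactly what the paper does, phrased as an induction on $n$ rather than a forward iteration.

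Where you diverge, and where there is a genuine gap, is in the initial phase. You propose to run the first $p_0$ stages of Lemma~\ref{lem-1} ``with a non-orbit base such as $y=z_j$, for which the centre hypothesis is trivially met.'' That is only true at the very first stage: after applying $f_{i_n}$, the cover balls lie near $f_{i_n}(z_j)$, which is generically far from $z_j$, so using $y=z_j$ at stage $2$ already violates the hypothesis $z\in B(y,r_0)$. To make your plan rigorous you would have to use the \emph{forward orbit} $w_p:=f_{i_{n-p+1}}\circ\cdots\circ f_{i_n}(z_j)$ as base points for $p\le p_0$; then $|w_p-\Pi\sigma^{n-p}\mathbf{i}|\le\gamma^p\operatorname{diam}(K)$ gives exactly the contraction you need to hand off to the orbit base at $p=p_0$. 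Even then you incur off-orbit factors that must be compared to $G(\sigma^{n-p}\mathbf{i})$ and $H(\sigma^{n-p}\mathbf{i})$, and the radius ratio coming from the $H$-comparison can exceed $1$, so one also needs $\eta$ chosen small enough to compensate — a point your sketch glosses over.

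The paper avoids all of this: for $n\le n_0$ it does not invoke Lemma~\ref{lem-1} at all. Since $G\ge C\ge 1$ and $H\ge r_{\min}>0$, the required ball count is $\ge 1$ and the required radius is $\ge r_{\min}^{n_0}$, so $f_{\mathbf{i}|n}(K)$ (of bounded diameter) is trivially covered once $C_1$ is taken large enough; this is the entire initial-phase argument. The induction step for $m\ge n_0$ then only needs the two one-line diameter estimates $\prod_{p=0}^{m-1}H(\sigma^{p+1}\mathbf{i})\le\gamma^{n_0}<r_0/2$ and $\operatorname{diam} f_{(\sigma\mathbf{i})|m}(K)<r_0/2$, combined with the fact that $\Pi\sigma\mathbf{i}\in f_{(\sigma\mathbf{i})|m}(K)$, to place each cover-ball centre in $B(\Pi\sigma\mathbf{i},r_0)$. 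I recommend replacing your drift-recursion and off-orbit-base discussion with this trivial-cover-plus-induction structure; the result is the same but the bookkeeping disappears.
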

\begin{proof}
Since $\{f_i\}_{i=1}^\ell$ is a $C^1$ IFS, there exists an open set $U\supset K$ such that each $f_i$ extends to a $C^1$ diffeomorphism $f_i:\; U\to f_i(U)$.  Applying Lemma \ref{lem-1} to the mappings $f_i$, we see that there exists $r_0>0$ such that for any $y\in K$, $z\in B(y, r_0)$, $0<r<r_0$ and $i\in \{1,\ldots, \ell\}$, the set $f_i(B(z, r))$ can be covered by
$$\theta(y,i):=\frac{C\phi^k(D_{y}f_{i})}{\alpha_{k+1}(D_{y}f_{i})^k}$$
balls of radius $\alpha_{k+1}(D_{y}f_{i})r$.

Since $f_1,\ldots, f_\ell$ are contracting on $U$, there exists $\gamma\in (0,1)$ such that
$$
|f_i(x)-f_i(y)|\leq \gamma|x-y|\quad \mbox{ for all }x, y\in U,\; i\in \{1,\ldots, \ell\}.
$$
It implies that
$\alpha_{1}(D_yf_i)\leq \gamma$ for any $y\in K$ and $i\in \{1,\ldots, \ell\}$. Take a large integer $n_0$ so that $$\gamma^{n_0}\max\{1,{\rm diam}(K)\}<r_0/2.$$
 Clearly there exists a large number $C_1$ so that the conclusion of the proposition holds for any positive integer  $n\leq n_0$ and $\mathbf{i}\in \Sigma$, i.e.~the set $f_{\mathbf{i}|n}(K)$ can be covered by
 $C_1\prod_{p=0}^{n-1}G(\sigma^p \mathbf{i})$  balls of radius $\prod_{p=0}^{n-1}H(\sigma^p \mathbf{i})$.   Below we show by induction  that this holds for all $n\in \N$ and $\mathbf{i}\in \Sigma$.

Suppose  for some $m\geq n_0$, the conclusion of the proposition holds for any positive integer  $n\leq m$ and $\mathbf{i}\in \Sigma$. Then for ${\bf i}\in \Sigma$,
$f|_{(\sigma {\bf i})|m}(K)$ can be covered by $C_1\prod_{p=0}^{m-1}G(\sigma^{p+1} \mathbf{i})$  balls of radius $\prod_{p=0}^{m-1}H(\sigma^{p+1} \mathbf{i})$. Let $B_1,\ldots B_N$ denote these balls. We may assume that $B_j\cap f|_{(\sigma {\bf i})|m}(K)\neq \emptyset$ for each $j$. Since  $$\prod_{p=0}^{m-1}H(\sigma^{p+1} \mathbf{i})\leq \gamma^m\leq \gamma^{n_0}<r_0/2$$
and $$d(\Pi\sigma\mathbf{i}, B_j\cap f_{(\sigma {\bf i})|m}(K))\leq {\rm  diam}(f_{(\sigma {\bf i})|m}(K))\leq \gamma^{n_0}{\rm  diam}(K)<r_0/2,$$
so the center of $B_j$ is in $B(\Pi \sigma {\bf i}, r_0)$.   Therefore $f_{i_1}(B_j)$ can be covered by $\theta(\Pi\sigma\mathbf{i}, i_1)=G(\bf{i})$ balls of radius $$H({\bf i})\cdot \prod_{p=0}^{m-1}H(\sigma^{p+1} \mathbf{i})=\prod_{p=0}^{m}H(\sigma^{p} \mathbf{i}).$$  Since  $f_{\mathbf {i}|{(m+1)}}(K)\subset \bigcup_{j=1}^N f_{i_1}(B_j)$, it follows that $f_{\mathbf {i}|{(m+1)}}(K)$ can be covered by
 $$G({\bf i})N\leq G({\bf i}) \cdot C_1\prod_{p=0}^{m-1}G(\sigma^{p+1} {\bf i})=C_1\prod_{p=0}^{m}G(\sigma^{p} {\bf i})$$
 balls of radius $\prod_{p=0}^{m}H(\sigma^{p} \mathbf{i})$.  Thus the proposition also holds for  $n=m+1$ and all ${\bf i}\in \Sigma$, as desired.
\end{proof}

Next we provide an upper bound of the upper box-counting dimension of the attractor $K$ of the IFS $\{f_i\}_{i=1}^\ell$.

\begin{pro}
\label{pro-4}
 Let $k\in \{0, 1,\ldots, d-1\}$.  Let $G, H:\Sigma\to \R$ be defined as in \eqref{e-t9}. Let $t$ be the unique real number so that $$P(\Sigma,\sigma,(\log G)+ t(\log H))=0.$$ Then
$
\overline{\dim}_BK\leq t.
$
\end{pro}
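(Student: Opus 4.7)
The plan is to combine the geometric ball-covering estimate of Proposition \ref{pro-2} with the thermodynamic asymptotic of Proposition \ref{pro-2.4} via a stopping-time construction on $\Sigma$. I will set $g := \log G$ and $h := \log H$, both continuous on $\Sigma$. Because each $f_i$ is a contraction there exists $\gamma \in (0,1)$ with $\alpha_1(D_y f_i) \leq \gamma$ for every admissible $y$ and $i$, so $h \leq \log \gamma < 0$ uniformly. Consequently Proposition \ref{pro-2.4} applies to the pair $(g,h)$ on $X = \Sigma$, and the unique real number it produces is precisely the $t$ appearing in the statement.

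For each sufficiently small $r > 0$, I will use the stopping set $\mathcal{A}_r$ from Proposition \ref{pro-2.4}. Since $h < 0$ uniformly, the quantity $\sup_{x \in [J]} \exp(S_{|J|} h(x))$ is strictly decreasing along prefixes of any $\mathbf{i} \in \Sigma$ and tends to $0$, so $\{[I] : I \in \mathcal{A}_r\}$ is a partition of $\Sigma$ and hence
$$K = \Pi(\Sigma) = \bigcup_{I \in \mathcal{A}_r} f_I(K).$$
For each $I \in \mathcal{A}_r$ I pick $\mathbf{i} \in [I]$ and apply Proposition \ref{pro-2} to obtain a cover of $f_I(K)$ by $C_1 \exp(S_{|I|} g(\mathbf{i}))$ balls of radius $\exp(S_{|I|} h(\mathbf{i}))$, the latter being strictly less than $r$ by the very definition of $\mathcal{A}_r$, regardless of which $\mathbf{i} \in [I]$ is chosen. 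Taking for each $I$ a point $\mathbf{i} = \mathbf{i}(I) \in [I]$ that nearly realises the supremum of $\exp(S_{|I|} g(\cdot))$ on $[I]$ yields
$$N_r(K) \leq C_1 \sum_{I \in \mathcal{A}_r} \sup_{\mathbf{i} \in [I]} \exp(S_{|I|} g(\mathbf{i})) =: C_1 \Theta_r.$$
The proof then finishes with Proposition \ref{pro-2.4}, which gives $\lim_{r \to 0} \log \Theta_r / \log r = -t$, whence $\overline{\dim}_B K = \limsup_{r \to 0} \log N_r(K)/\log(1/r) \leq t$.

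The only non-routine issue is the matching between the geometric and thermodynamic sides. The radius $\exp(S_{|I|} h(\mathbf{i}))$ produced by Proposition \ref{pro-2} depends on the chosen $\mathbf{i} \in [I]$, so to assemble an honest cover of $K$ by balls of radius $< r$ one must control these radii uniformly on each cylinder. This is exactly why Proposition \ref{pro-2.4} uses the \emph{supremum} of $\exp(S_n h)$ over $[J]$ in the definition of $\mathcal{A}_r$: it forces $\exp(S_{|I|} h(\mathbf{i})) < r$ for every $\mathbf{i} \in [I]$, so any choice in each $[I]$ works, and the sup-based sum $\Theta_r$ is a valid upper bound on the covering number. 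Once this is noticed, the proof is a direct appeal to Propositions \ref{pro-2} and \ref{pro-2.4}; crucially, no distortion or bounded-variation hypothesis on iterates of the IFS is needed, because Proposition \ref{pro-2} is applied at the stopping scale only.
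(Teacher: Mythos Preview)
Your proof is correct and follows essentially the same approach as the paper: define $g=\log G$, $h=\log H$, use the stopping-time partition $\mathcal A_r$ from Proposition~\ref{pro-2.4}, apply Proposition~\ref{pro-2} on each cylinder to cover $K$ by balls of radius $<r$, and conclude via Proposition~\ref{pro-2.4}. The only cosmetic remark is that the phrase ``a point $\mathbf{i}(I)$ that nearly realises the supremum of $\exp(S_{|I|}g)$'' is unnecessary---any choice of $\mathbf{i}\in[I]$ gives a valid cover with count $\leq C_1\sup_{[I]}\exp(S_{|I|}g)$, which is exactly what the paper writes directly.
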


\begin{proof}
For short, we write $g=\log G$ and $h=\log H$.
Define
$$
r_{\rm min}=\min_{x\in \Sigma}\alpha_{k+1}(D_{\Pi\sigma x}f_{x_1}),\qquad r_{\rm max}=\max_{x\in \Sigma}\alpha_{k+1}(D_{\Pi\sigma x}f_{x_1}).
$$
Then $0<r_{\rm \min} \leq r_{\rm max}<1$. For $0<r<r_{\rm min}$, define
\begin{equation}
\label{e-t12}
\mathcal A_r=\left\{i_1\ldots i_n\in \Sigma^*:\; \sup_{x\in [i_1\ldots i_n]}S_nh(x)<\log r\leq \sup_{y\in [i_1\ldots i_{n-1}]}S_{n-1}h(y)\right\};
\end{equation}
clearly $\{[I]:\; I\in \mathcal A_r\}$ is a partition of $\Sigma$. By Proposition \ref{pro-2}, there exists a constant $C_1>0$ such that for each $0<r<r_{\rm min}$,   every $I\in \mathcal A_r$ and $x\in [I]$, $f_I(K)$ can be covered by
$$
C_1\exp(S_{|I|}g(x)) \leq C_1\exp\left(\sup_{y\in [I]}S_{|I|}g(y)\right)
$$
many balls of radius $$\exp(S_{|I|}h(x))\leq \exp\left(\sup_{y\in [I]}S_{|I|}h(y)\right)<r.$$
It follows that $K$ can be covered by
$$
C_1\sum_{I\in \mathcal A_r}\exp\left(\sup_{y\in [I]}S_{|I|}g(y)\right)
$$
many balls of radius $r$. Hence by Proposition \ref{pro-2.4},
$$
\overline{\dim}_BK\leq \limsup_{r\to 0}\frac{\log\left(\sum_{I\in \mathcal A_r}\exp\left(\sup_{y\in [I]}S_{|I|}g(y)\right)\right)}{\log (1/r)}= t.
$$
This completes the proof of the proposition.
\end{proof}

As an application of Proposition  \ref{pro-4}, we may estimate the upper box-counting dimension of the projections of a class of $\sigma$-invariant sets under the coding map.
\begin{pro}
\label{pro-upper}
Let $X$ be a compact  subset of $\Sigma$ satisfying $\sigma X\subset X$, and  $k\in \{0,\ldots, d-1\}$.
Then for each $n\in \N$,
$$
\overline{\dim}_B\Pi\left(X^{(n)}\right)\leq t_n,
$$
where $X^{(n)}$ is defined as in \eqref{e-xn},   $t_n$ is the unique number for which $$P\left(X^{(n)}, \sigma^n, (\log G_n)+ t_n(\log H_n)\right)=0,$$ and $G_n, H_n$ are continuous functions on $\Sigma$ defined by
\begin{equation}
\label{e-10}G_n(y):=\frac{C\phi^k(D_{\Pi \sigma^n y}f_{y|n})}{\alpha_{k+1}(D_{\Pi \sigma^n y}f_{y|n})^k},\qquad H_n(y):=\alpha_{k+1}(D_{\Pi \sigma^n y}f_{y|n}),
\end{equation}
with  $C=(2d)^d$.
\end{pro}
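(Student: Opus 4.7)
My plan is to deduce this from Proposition \ref{pro-4} applied to the iterated IFS. Let $\mathcal{B}:=X_n^*$ be the (finite) alphabet of length-$n$ words allowed in $X$ and consider the family $\{f_{\mathbf{i}}\}_{\mathbf{i}\in \mathcal{B}}$, where $f_{\mathbf{i}}:=f_{i_1}\circ\cdots\circ f_{i_n}$. This is itself a $C^1$ IFS on $Z$ with the same open neighborhood $U$, since each $f_{\mathbf{i}}$ is a finite composition of contracting $C^1$-diffeomorphisms on $U$.

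I would then identify $(X^{(n)},\sigma^n|_{X^{(n)}})$ with the full shift $(\mathcal{B}^{\mathbb{N}},\tau)$ over $\mathcal{B}$ via the concatenation homeomorphism
$$\Phi:\mathcal{B}^{\mathbb{N}}\to X^{(n)},\qquad (\mathbf{i}_1,\mathbf{i}_2,\ldots)\mapsto \mathbf{i}_1\mathbf{i}_2\mathbf{i}_3\cdots,$$
which satisfies $\Phi\circ\tau=\sigma^n\circ\Phi$. The coding map of the iterated IFS is $\Pi':=\Pi\circ\Phi$, and its attractor equals $\Pi(X^{(n)})$ by the uniqueness of attractors together with $\Pi(X^{(n)})=\bigcup_{\mathbf{i}\in\mathcal{B}}f_{\mathbf{i}}(\Pi(X^{(n)}))$. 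For each $\mathbf{J}=(\mathbf{i}_p)_{p\geq 1}\in\mathcal{B}^{\mathbb{N}}$ with $y:=\Phi(\mathbf{J})$, one has $\Pi'\tau\mathbf{J}=\Pi\sigma^n y$ and $f_{\mathbf{i}_1}=f_{y|n}$, so the functions $G,H$ supplied by \eqref{e-t9} for the iterated IFS equal $G_n\circ\Phi$ and $H_n\circ\Phi$, where $G_n,H_n$ are as in \eqref{e-10}.

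Applying Proposition \ref{pro-4} to the iterated IFS with the same parameter $k$ yields $\overline{\dim}_B\Pi(X^{(n)})\leq\widetilde t$, where $\widetilde t$ is the unique real number with $P(\mathcal{B}^{\mathbb{N}},\tau,\log G'+\widetilde t\log H')=0$ for $G'=G_n\circ\Phi$ and $H'=H_n\circ\Phi$. Since $\Phi$ is a topological conjugacy, the standard invariance of topological pressure under conjugacy gives
$$P(\mathcal{B}^{\mathbb{N}},\tau,\log G'+\widetilde t\log H')=P(X^{(n)},\sigma^n,\log G_n+\widetilde t\log H_n),$$
so $\widetilde t=t_n$ and the desired bound $\overline{\dim}_B\Pi(X^{(n)})\leq t_n$ follows.

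There is no substantive analytic difficulty in this plan; it is essentially a formal reduction to Proposition \ref{pro-4}. The only care required is the bookkeeping verifying that $\{f_{\mathbf{i}}\}_{\mathbf{i}\in\mathcal{B}}$ satisfies the paper's definition of a $C^1$ IFS and that $\Phi$ correctly transports differentials, singular value functions, the coding map, the attractor and the topological pressure between the two pictures.
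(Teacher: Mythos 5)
Your proposal is correct and matches the paper's proof exactly: the paper's own argument is the one-sentence observation that Proposition \ref{pro-4} applied to the iterated IFS $\{f_I : I\in X_n^*\}$ gives the result, and your write-up simply supplies the bookkeeping (the conjugacy $\Phi$, the identification of the attractor of the iterated IFS with $\Pi(X^{(n)})$, the transport of $G,H$ to $G_n,H_n$, and the conjugacy-invariance of pressure) that makes that one sentence rigorous.
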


\begin{proof}
The result is obtained by applying Proposition \ref{pro-4} to the IFS $\{f_{I}:\; I\in X^*_n\}$ instead of $\{f_i\}_{i=1}^\ell$, where $X_n^*$ stands for the collection of words of length $n$ allowed in $X$.
\end{proof}

Now we are ready to prove Theorem \ref{thm-1}.

\begin{proof}[Proof of Theorem \ref{thm-1}] Write $s=\dim_SX$. We may assume that $s<d$; otherwise we have nothing left to prove. Set $k=[s]$, i.e. $k$ is the largest integer less than or equal to $s$.
Let $\mathcal U=\{u_n\}_{n=1}^\infty$ be the sub-additive potential on $\Sigma$ defined by
$$u_n(x)=\log \phi^s(D_{\Pi\sigma^n x} f_{x|n}).$$
Then $P(X, \sigma, {\mathcal U})=0$ by the definition of $\dim_SX$.  Hence by Proposition \ref{pro-1},
 $$
 \lim_{n\to \infty}\frac{1}{n}P\left(X^{(n)}, \sigma^n, u_n\right)=\inf_{n\geq 1}\frac{1}{n}P\left(X^{(n)}, \sigma^n, u_n\right)=0.
 $$
 It follows that for each $\epsilon>0$, there exists $N_\epsilon>0$ such that
 \begin{equation}
 \label{e-t8}
 0\leq P\left(X^{(n)}, \sigma^n, u_n\right)\leq n\epsilon \quad\mbox{ for  }\; n>N_\epsilon.
 \end{equation}

For $n\in {\Bbb N}$, let $s_n$ be the unique real number so that $P(X^{(n)}, \sigma^n, v_n)=0$, where $v_n$ is a continuous function on $\Sigma$ defined by
 \begin{eqnarray*}
 v_n(x)&=& \log G_n(x)+ s_n \log H_n(x)\\
 &=&\log \left( C\phi^k(D_{\Pi\sigma^nx}f_{x|n})\alpha_{k+1}(D_{\Pi\sigma^nx}f_{x|n})^{s_n-k}\right),
 \end{eqnarray*}
where $G_n, H_n$ are defined in \eqref{e-10} and $C=(2d)^d$. By Proposition \ref{pro-upper}, $$\overline{\dim}_B\Pi(X)\leq \overline{\dim}_B\Pi\left(X^{(n)}\right)\leq  s_n.$$
 If $s\geq s_n$ for some $n$, then $\overline{\dim}_B\Pi(X)\leq s_n\leq s$ and we are done. In what follows, we assume that $s<s_n$ for each $n$. Then for each $x\in \Sigma$,
  \begin{eqnarray*}
  u_n(x)-v_n(x)&=& -\log C +(s-s_n)\log \alpha_{k+1}(D_{\Pi\sigma^nx}f_{x|n})\\
  &\geq & -\log C +(s-s_n)\log \alpha_1(D_{\Pi\sigma^nx}f_{x|n})\\
  &\geq & -\log C+ n(s-s_n) \log \theta,
  \end{eqnarray*}
 where
$$
 \theta:=\max_{y\in \Sigma} \alpha_1(D_{\Pi\sigma y}f_{y_1})<1.
 $$
 Hence
 \begin{align*}
  P(X^{(n)}, \sigma^n, u_n)&=P(X^{(n)}, \sigma^n, u_n)-P(X^{(n)}, \sigma^n, v_n)\\
 & \geq \inf_{x\in \Sigma} (u_n(x)-v_n(x))\\
  &\geq -\log C+ n(s-s_n) \log \theta,
 \end{align*}
 where in the second inequality, we used \cite[Theorem 9.7(iv)]{Walters82}.
 Combining it with \eqref{e-t8} yields that for $n\geq N_\epsilon$,
 $$
 n\epsilon\geq -\log  C + n(s-s_n) \log \theta,
 $$
 so
 $$
 s\geq s_n + \frac{\epsilon+ n^{-1} \log C}{\log \theta}\geq \overline{\dim}_B\Pi(X)+\frac{\epsilon+ n^{-1} \log C}{\log \theta}.
 $$
 Letting $n\to \infty$ and then $\epsilon\to 0$, we obtain $s\geq \overline{\dim}_B\Pi(X)$, as desired.
\end{proof}

\section{The proof of Theorem \ref{thm-2}}
\label{S-5}

Let $\Pi:\Sigma\to \R^d$ be the coding map associated with a $C^1$ IFS $\{f_i\}_{i=1}^\ell$ on $\R^d$ (cf.~ \eqref{e-coding}).
 For $E\subset \R^d$ and $\delta>0$, let $N_\delta(E)$ denote the smallest integer $N$ for which $E$ can  be covered by $N$ closed balls of radius $\delta$.
 For $T\in \R^{d\times d}$,  let $\alpha_1(T)\geq\cdots\geq \alpha_d(T)$ denote the  singular values of $T$, and let $\phi^s(T)$ be the singular value function defined as in \eqref{e-singular}.

 The following geometric counting lemma plays an important role in the proof of Theorem \ref{thm-2}. It is  of independent interest as well.
\begin{lem}
\label{lem-4.1}
Let $m$ be an ergodic $\sigma$-invariant Borel probability measure on $\Sigma$.  Set
\begin{equation}
\label{e-lambda}
\lambda_i:=\lim_{n\to \infty} \frac{1}{n}\int \log\left( \alpha_i(D_{\Pi\sigma^nx}f_{x|n})\right)\; dm(x),\quad i=1,\ldots, d.
\end{equation}
Let $k\in \{0,\ldots, d-1\}$.  Write $u:=\exp(\lambda_{k+1})$. Then for $m$-a.e.~$x\in \Sigma$,
\begin{equation}
\label{e-gamma1}
\limsup_{n\to \infty} \frac{1}{n}\log N_{u^n}(\Pi([x|n]))\leq (\lambda_1+\cdots+\lambda_k)-k\lambda_{k+1}.
\end{equation}

\end{lem}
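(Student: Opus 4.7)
My plan is to apply Proposition \ref{pro-2} to the $q$-th iterated IFS $\{f_I : I \in \Sigma_q^*\}$ for a large parameter $q$, and then use Birkhoff's ergodic theorem for $\sigma^q$ together with Lemma \ref{lem-2.3.1} to convert the resulting ergodic averages into the Lyapunov exponents $\lambda_i$.

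Concretely, set
$$G_q(\mathbf{i}) := \frac{(2d)^d\, \phi^k(D_{\Pi\sigma^q \mathbf{i}}f_{\mathbf{i}|q})}{\alpha_{k+1}(D_{\Pi\sigma^q \mathbf{i}}f_{\mathbf{i}|q})^k}, \qquad H_q(\mathbf{i}) := \alpha_{k+1}(D_{\Pi\sigma^q \mathbf{i}}f_{\mathbf{i}|q}).$$
Applying Proposition \ref{pro-2} to the block IFS $\{f_I : I \in \Sigma_q^*\}$ produces a constant $C_1 = C_1(q) > 0$ such that for every $x \in \Sigma$ and every $N \in \N$, the set $\Pi([x|(Nq)]) = f_{x|(Nq)}(K)$ is covered by $C_1 \prod_{j=0}^{N-1} G_q(\sigma^{jq}x)$ balls of radius $R_{N,q}(x) := \prod_{j=0}^{N-1} H_q(\sigma^{jq}x)$. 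Since $m$ is $\sigma^q$-invariant, Birkhoff's theorem yields, for $m$-a.e.~$x$,
$$\frac{1}{N}\sum_{j=0}^{N-1} \log G_q(\sigma^{jq}x) \to E(\log G_q|{\mathcal C}_q)(x), \quad \frac{1}{N}\sum_{j=0}^{N-1} \log H_q(\sigma^{jq}x) \to E(\log H_q|{\mathcal C}_q)(x),$$
where ${\mathcal C}_q$ denotes the $\sigma$-algebra of $\sigma^q$-invariant sets.

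To pass from conditional expectations to Lyapunov exponents as $q\to\infty$, note that the sequences $\{\log \phi^k(Df_{\cdot|n})\}_n$ and $\{\log \phi^{k+1}(Df_{\cdot|n})\}_n$ are subadditive (by submultiplicativity of $\phi^s$ with integer $s$) and bounded in absolute value by $C'n$ on the compact space $\Sigma$. Applying Lemma \ref{lem-2.3.1} to each, using that the Kingman limits equal the constants $\lambda_1+\cdots+\lambda_k$ and $\lambda_1+\cdots+\lambda_{k+1}$ by ergodicity of $m$ under $\sigma$, and subtracting, one obtains for $m$-a.e.~$x$,
$$\frac{1}{q} E(\log H_q|{\mathcal C}_q)(x) \to \lambda_{k+1}, \quad \frac{1}{q} E(\log G_q|{\mathcal C}_q)(x) \to (\lambda_1+\cdots+\lambda_k) - k\lambda_{k+1}, \quad q\to\infty.$$

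Now fix $\epsilon > 0$. Combining the above, for $m$-a.e.~$x$ one may choose $q = q(x,\epsilon)$ large and then $N_0 = N_0(x,q,\epsilon)$ so that for all $N \geq N_0$,
$$R_{N,q}(x) \leq u^{Nq}\, e^{2Nq\epsilon}, \qquad C_1\prod_{j=0}^{N-1} G_q(\sigma^{jq}x) \leq \exp\bigl(Nq[(\lambda_1+\cdots+\lambda_k) - k\lambda_{k+1} + 2\epsilon]\bigr).$$
Each ball of radius $R_{N,q}(x)$ can be subdivided into $O(e^{2dNq\epsilon})$ balls of radius $u^{Nq}$, whence
$$\frac{1}{Nq}\log N_{u^{Nq}}\bigl(\Pi([x|(Nq)])\bigr) \leq (\lambda_1+\cdots+\lambda_k) - k\lambda_{k+1} + (2+2d)\epsilon + o_N(1).$$
The case of general $n$ is reduced to this by writing $n = Nq + r$ with $0 \leq r < q$, using $\Pi([x|n]) \subset \Pi([x|Nq])$ and converting from radius $u^{Nq}$ to radius $u^n$ at the cost of a multiplicative constant depending only on $q$. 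Letting $N \to \infty$ and then $\epsilon \to 0$ completes the argument. The main obstacle is the transition from the Birkhoff averages over $\sigma^q$ (conditional expectations with respect to the generally non-trivial ${\mathcal C}_q$) to the Kingman/Lyapunov limits, which is precisely what Lemma \ref{lem-2.3.1} is designed to handle.
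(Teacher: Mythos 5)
Your proposal follows exactly the paper's proof: apply Proposition \ref{pro-2} to the $q$-block IFS, use Birkhoff for $\sigma^q$ to obtain conditional expectations with respect to $\mathcal{C}_q$, invoke Lemma \ref{lem-2.3.1} on the subadditive sequences $\log\phi^k$ and $\log\phi^{k+1}$ to pass to the Lyapunov exponents as $q\to\infty$, then handle the rounding from $R_{N,q}(x)$ to $u^{Nq}$ and from multiples of $q$ to general $n$. The parameter names differ ($q$ for the paper's $p$) and the ball-subdivision bookkeeping is phrased slightly differently, but the argument is the same.
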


\begin{proof}
 It is known (see, e.g.~\cite[Theorem 3.3.3]{Arnold98}) that for $m$-a.e.~$x$,
 $$
\lim_{p\to \infty}\frac{1}{p} \log  \left(\alpha_i (D_{\Pi \sigma^p x}f_{x|p})\right)=\lambda_i,\qquad i=1,\ldots, d,
 $$
 and
 $$
\lim_{p\to \infty}\frac{1}{p} \log  \left(\phi^i (D_{\Pi \sigma^p x}f_{x|p})\right)=\lambda_1+\cdots+\lambda_i,\qquad i=1,\ldots, d.
 $$

  For $i\in \{1,\ldots, d\}$, $x\in \Sigma$ and $p\in {\Bbb N}$, set
  \begin{equation*}
  \begin{split}
  w_p^{(i)}(x)&= \log \phi^i(D_{\Pi \sigma^p x}f_{x|p}), \\
  v_p^{(i)}(x)&= \log \alpha_i(D_{\Pi \sigma^p x}f_{x|p}).
  \end{split}
  \end{equation*}
 Then by the definition of $\phi^i$,
 \begin{equation}\label{e-w1}
 w_p^{(i)}=v_p^{(1)}+\cdots+v_p^{(i)}.
 \end{equation}
 Since $\phi^i$ is sub-multiplicative on $\R^{d\times d}$ (cf. \cite[Lemma 2.1]{Falconer88}), $\left\{w^{(i)}_p\right\}_{p=1}^\infty$   is a sub-additive potential  satisfying
  $$
 |w_p^{(i)}(x)|\leq pC,\quad p\in {\Bbb N}, \;x\in \Sigma,
  $$
  for some constant $C>0$.  Set ${\mathcal C}_p:=\{B\in {\mathcal B}(\Sigma):\; \sigma^{-p}B=B\; a.e.\}$. Then by Lemma \ref{lem-2.3.1}, for $m$-a.e.~$x$,
  \begin{equation}
  \label{e-w2}
  \lim_{p\to \infty}E\left(\frac{w^{(i)}_p}{p}  \Big| {\mathcal C}_p \right)(x) =\lim_{p\to \infty} \frac{1}{p}w^{(i)}_p(x)=\lambda_1+\cdots+\lambda_i, \quad i=1,\ldots, d,
  \end{equation}
  and so by \eqref{e-w1},
  \begin{equation}
  \label{e-w3}
  \lim_{p\to \infty}E\left(\frac{v^{(i)}_p}{p}  \Big| {\mathcal C}_p \right)(x) =\lambda_i, \quad i=1,\ldots, d,
  \end{equation}

  Let $p\in \N$. Applying Proposition \ref{pro-2} to the IFS $\{f_I:\; I\in \mathcal A^p\}$, we see that there exists a  positive number $C_1(p)$ such that for any $x\in \Sigma$ and $n\in \N$, the set $\Pi([x|np])=f_{x|np}(K)$ can be covered by $C_1(p)\prod_{i=0}^{n-1}G_p(\sigma^{pi} x)$  balls of radius $\prod_{i=0}^{n-1}H_p(\sigma^{ip} x)$,   where
\begin{equation}
\label{e-n1}
G_p(x):=\frac{C\phi^k(D_{\Pi \sigma^px }f_{x|p})}{\alpha_{k+1}(D_{\Pi \sigma^p x}f_{x|p})^k},\qquad H_p(x):=\alpha_{k+1}(D_{\Pi \sigma^px}f_{x|p}),
\end{equation}
with $C=(2d)^d$.
By the Birkhoff's ergodic theorem, for $m$-a.e.~$x\in \Sigma$,
\begin{equation}
\label{e-n2}
\lim_{n\to \infty}\frac{1}{np}\log \left(C_1(p)\prod_{i=0}^{n-1}G_p(\sigma^{pi} x)\right)=\frac{\log C}{p}+E\left(\frac{w^{(k)}_p}{p}  \Big| {\mathcal C}_p \right)(x)-kE\left(\frac{v^{(k+1)}_p}{p}  \Big| {\mathcal C}_p \right)(x)
\end{equation}
and
\begin{equation}
\label{e-n3}
\lim_{n\to \infty}\frac{1}{np}\log \left( \prod_{i=0}^{n-1}H_p(\sigma^{ip} x)\right)=E\left(\frac{v^{(k+1)}_p}{p} \Big| {\mathcal C}_p \right)(x)
,
\end{equation}

Let $\epsilon>0$. By \eqref{e-w2}, \eqref{e-w3}, \eqref{e-n2},\eqref{e-n3},  for $m$-a.e.~$x$ there exists a positive integer $p_0(x)$ such that for any $p\geq p_0(x)$,
\begin{equation}
\label{e-n4}
\prod_{i=0}^{n-1}H_p(\sigma^{ip} x)\leq (u+\epsilon)^{np}\quad \mbox{ for large enough }n,
\end{equation}
and
\begin{equation}
\label{e-n4'}
\frac{1}{np}\log \left( C_1(p)\prod_{i=0}^{n-1}G_p(\sigma^{pi} x)\right) \leq \lambda_1+\cdots +\lambda_k-k\lambda_{k+1}+\epsilon \quad \mbox{ for large enough }n,
\end{equation}

Fix such an $x$ and let $p\geq p_0(x)$. By \eqref{e-n4},
$$
N_{(u+\epsilon)^{np}}(\Pi([x|pn]))\leq  C_1(p)\prod_{i=0}^{n-1}G_p(\sigma^{pi} x) \quad \mbox{ for large enough }n.
$$
Notice that there exists a constant $C_2=C_2(d)>0$ such that a ball of radius $(u+\epsilon)^{np}$ in $\R^d$ can be covered by $C_2 (1+\epsilon/u)^{dnp}$ balls of radius $u^{np}$. It follows that  for large enough $n$,
\begin{eqnarray*}
N_{u^{np}}(\Pi([x|pn]))&\leq & C_2 (1+\epsilon/u)^{dnp} N_{(u+\epsilon)^{np}}(\Pi([x|pn]))\\
&\leq &
  C_1(p) C_2 (1+\epsilon/u)^{dnp} \prod_{i=0}^{n-1}G_p(\sigma^{pi} x).
\end{eqnarray*}
Hence by \eqref{e-n4'},
\begin{eqnarray*}
\limsup_{n\to \infty}\frac{1}{n}\log N_{u^{n}}(\Pi([x|n]))&=&\limsup_{n\to \infty} \frac{1}{np}\log N_{u^{np}}(\Pi([x|pn]))\\
&\leq & d\log(1+\epsilon/u)+ \limsup_{n\to \infty} \frac{1}{np}\log \left(\prod_{i=0}^{n-1}G_p(\sigma^{pi} x)\right)\\
&\leq  & d\log(1+\epsilon/u)+\epsilon+\lambda_1+\cdots+\lambda_k-k\lambda_{k+1},
\end{eqnarray*}
where the first equality follows from the fact that for $pn\leq m<p(n+1)$,
\begin{eqnarray*}
N_{u^{m}}(\Pi([x|m]))\leq N_{u^{m}}(\Pi([x|pn]))&\leq& 4^d (u^{pn-m})^d N_{u^{np}}(\Pi([x|pn]))\\
&\leq& 4^d u^{-pd} N_{u^{np}}(\Pi([x|pn])),
\end{eqnarray*}
using the fact that for $R>r>0$, a ball of radius $R$ in $\R^d$ can be covered by $(4R/r)^d$ balls of radius $r$.
Letting  $\epsilon\to 0$ yields the desired inequality \eqref{e-gamma1}.
\end{proof}

The following result is also needed in the proof of Theorem \ref{thm-2}.
\begin{lem}
\label{lem-est}
Let $m$ be a Borel probability measure on $\Sigma$. Let $\rho, \epsilon \in (0,1)$. Then for $m$-a.e.~$x=(x_n)_{n=1}^\infty\in \Sigma$,
\begin{equation}
\label{e-Borel}
m\circ\Pi^{-1} (B(\Pi x, 2\rho^n))\geq (1-\epsilon)^n\frac{m([x_1\ldots x_n])}{N_{\rho^n}(\Pi([x_1\ldots x_n]))} \quad \mbox{ for  large enough }n.
\end{equation}
 \end{lem}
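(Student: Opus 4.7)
The plan is to apply the first Borel--Cantelli lemma to the ``bad'' sets
\[
E_n:=\Bigl\{x\in\Sigma:\; m\circ\Pi^{-1}\bigl(B(\Pi x,2\rho^n)\bigr)<(1-\epsilon)^n\,\frac{m([x_1\ldots x_n])}{N_{\rho^n}(\Pi([x_1\ldots x_n]))}\Bigr\},
\]
showing directly that $\sum_n m(E_n)<\infty$. Once this is proved, $m$-a.e.\ $x$ belongs to only finitely many $E_n$, which is exactly the assertion of the lemma.

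To estimate $m(E_n)$, I will work one cylinder at a time. Fix a word $I$ of length $n$, write $N_I:=N_{\rho^n}(\Pi([I]))$, and choose a family of closed balls $B_1,\ldots,B_{N_I}$ of radius $\rho^n$ covering $\Pi([I])$. For every $y\in[I]$ pick an index $j(y)$ with $\Pi y\in B_{j(y)}$; then
\[
B_{j(y)}\subset B(\Pi y,2\rho^n),
\]
so $m\circ\Pi^{-1}(B(\Pi y,2\rho^n))\geq m(\Pi^{-1}(B_{j(y)}))$. Consequently, if $y\in E_n\cap[I]$ then $j(y)$ must belong to
\[
J_I:=\Bigl\{j\in\{1,\ldots,N_I\}:\; m(\Pi^{-1}(B_j))<(1-\epsilon)^n\,m([I])/N_I\Bigr\}.
\]
Setting $A_j:=\Pi^{-1}(B_j)\cap[I]$, this gives the containment
\[
E_n\cap[I]\subset\bigcup_{j\in J_I}A_j,
\]
and hence
\[
m(E_n\cap[I])\leq\sum_{j\in J_I}m(A_j)\leq\sum_{j\in J_I}m(\Pi^{-1}(B_j))<|J_I|\cdot(1-\epsilon)^n\,\frac{m([I])}{N_I}\leq(1-\epsilon)^n\,m([I]).
\]

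Summing over all length-$n$ words $I$ and using $\sum_{|I|=n}m([I])=1$ yields $m(E_n)\leq(1-\epsilon)^n$. Since $\sum_{n\geq 1}(1-\epsilon)^n<\infty$, the Borel--Cantelli lemma gives $m(\limsup_n E_n)=0$, which is precisely the conclusion of the lemma. The only non-routine aspect is the bookkeeping to pass from a cover of $\Pi([I])$ by balls of radius $\rho^n$ to the slightly larger balls $B(\Pi y,2\rho^n)$; the doubling of the radius is exactly what makes each ball in the cover sit inside $B(\Pi y,2\rho^n)$ whenever $\Pi y$ lies in that ball, and this is the only place where the factor $2$ in the statement is used.
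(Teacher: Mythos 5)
Your proof is correct and follows essentially the same strategy as the paper: bound $m(E_n\cap[I])$ by $(1-\epsilon)^n m([I])$ using a cover of $\Pi([I])$ by $N_{\rho^n}(\Pi([I]))$ balls of radius $\rho^n$ and the fact that any such ball containing $\Pi y$ lies inside $B(\Pi y,2\rho^n)$, then sum over cylinders and apply Borel--Cantelli. The only cosmetic difference is that you sort ball indices into the ``small-measure'' set $J_I$, whereas the paper picks a representative point $x^{(i)}\in\Lambda_n\cap[I]$ from each ball that meets the bad set; both devices yield the same estimate.
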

\begin{proof}
The formulation and the proof of the above lemma are adapted from an argument given by Jordan \cite{Jordan11}. A similar idea was also employed  in the proof of \cite[Theorem 2.2]{Rossi14}.

For $n\in \N$, let $\Lambda_n$ denote the set of the points $x=(x_n)_{n=1}^\infty\in \Sigma$ such that
$$
m\circ\Pi^{-1} (B(\Pi x, 2\rho^n))< (1-\epsilon)^n\frac{m([x_1\ldots x_n])}{N_{\rho^n}(\Pi([x_1\ldots x_n]))}.
$$
To prove that \eqref{e-Borel} holds almost everywhere,  by the Borel-Cantelli lemma it suffices to show that
\begin{equation}
\label{e-eta}
\sum_{n=1}^\infty m(\Lambda_n)<\infty.
\end{equation}
For this purpose, let us  estimate $m(\Lambda_n)$.  Fix $n\in \N$ and $I\in \mathcal A^n$. Notice that $\Pi([I])$ can be covered by $N_{\rho^n}(\Pi([I]))$ balls of radius $\rho^n$. As a consequence, there exists $L\leq N_{\rho^n}(\Pi([I]))$ such that $\Pi(\Lambda_n\cap [I])$ can be covered by $L$ balls of radius $\rho^n$, say, $B_1,\ldots, B_L$. We may assume that $\Pi(\Lambda_n\cap [I])\cap B_i\neq \emptyset$ for each $1\leq i\leq L$. Hence for each $i$, we may pick $x^{(i)}\in \Lambda_n \cap [I]$ such that $\Pi x^{(i)} \in B_i$. Clearly $B_i\subset B(\Pi x^{(i)}, 2 \rho^n)$. Since $x^{(i)}\in \Lambda_n \cap [I]$, by the definition of $\Lambda_n$ we obtain
$$m\circ\Pi^{-1} (B(\Pi x^{(i)}, 2\rho^n))< (1-\epsilon)^n\frac{m([I])}{N_{\rho^n}(\Pi([I]))}.$$
It follows that
\begin{eqnarray*}
m(\Lambda_n\cap [I])&\leq& m\circ \Pi^{-1}(\Pi(\Lambda_n\cap [I]))\\
&\leq & m\circ \Pi^{-1}\left(\bigcup_{i=1}^LB_i\right)\\
&\leq & m\circ \Pi^{-1}\left(\bigcup_{i=1}^LB(\Pi x^{(i)}, 2\rho^n)\right)\\
&\leq & L (1-\epsilon)^n\frac{m([I])}{N_{\rho^n}(\Pi([I]))}\\
&\leq & (1-\epsilon)^nm([I]).
\end{eqnarray*}
Summing over $I\in \mathcal A^n$ yields that $m(\Lambda_n)\leq (1-\epsilon)^n$, which implies \eqref{e-eta}.
\end{proof}
\begin{rem}
{\rm
Lemma \ref{lem-est} remains valid when the coding map $\Pi:\Sigma\to \R^d$ is replaced by any Borel measuarble map from $\Sigma$ to $\R^d$.
}
\end{rem}

Now we are ready to prove Theorem \ref{thm-2}.

\begin{proof}[Proof of Theorem \ref{thm-2}]
We may assume that $s:=\dim_{L}m<d$; otherwise there is nothing left to prove. Set $k=[s]$. Let $\lambda_i$, $i=1,\ldots, d$, be defined as in \eqref{e-lambda}. Then by Definition \ref{de-2},
$$
h_m(\sigma)+\lambda_1+\cdots+\lambda_k+(s-k)\lambda_{k+1}=0.
$$
Let $u=\exp(\lambda_{k+1})$ and $\epsilon\in (0,1)$. Applying Lemma \ref{lem-est} yields that for $m$-a.e.~$x=(x_n)_{n=1}^\infty\in \Sigma$,
\begin{equation}
\label{e-Borel'}
m\circ\Pi^{-1} (B(\Pi x, 2u^n))\geq (1-\epsilon)^n\frac{m([x_1\ldots x_n])}{N_{u^n}(\Pi([x_1\ldots x_n]))} \quad \mbox{ for  large enough }n.
\end{equation}
It follows that for $m$-a.e.~$x=(x_n)_{n=1}^\infty\in \Sigma$,
\begin{eqnarray*}
&&\overline{d}(m\circ \Pi^{-1}, \Pi x)\\
&&\mbox{}\quad=\limsup_{n\to \infty} \frac{\log\left(m\circ\Pi^{-1} (B(\Pi x, 2u^n))\right)}{n\log u} \\
&&\mbox{}\quad\leq \frac{\log(1-\epsilon)}{\log u}+\limsup_{n\to \infty}\left(\frac{\log m([x_1\ldots x_n])}{n \log u} -\frac{\log N_{u^n}(\Pi([x_1\ldots x_n]))}{n\log u}\right)\\
&&\mbox{}\quad\leq \frac{\log(1-\epsilon)-h_m(\sigma)-(\lambda_1+\cdots+\lambda_k)+k\lambda_{k+1}}{\log u}\\
&&\mbox{}\quad= \frac{\log(1-\epsilon)+s\lambda_{k+1}}{\lambda_{k+1}},
\end{eqnarray*}
where in the third inequality, we used the Shannon-McMillan-Breiman theorem (cf.~\cite[Page.~93]{Walters82}) and Lemma \ref{lem-4.1} (keeping in mind that $\log u=\lambda_{k+1}<0$). Letting $\epsilon\to 0$ yields the desired result. \end{proof}

\section{Upper bound for the box-counting dimension of $C^1$-repellers and the Lyapunov dimensions of ergodic invariant measures}
\label{S-6}
 Throughout this section let ${\pmb M}$ be a smooth Riemannian manifold of dimension $d$ and $\psi: {\pmb M}\to {\pmb M}$ a $C^1$-map. Let $\Lambda$ be a compact subset of ${\pmb M}$ such that $\psi(\Lambda)=\Lambda$, and assume that $\Lambda$ is a repeller of $\psi$ (cf. Section \ref{S-1}). Below we first introduce the definitions of singularity and Lyapunov dimensions for the case of repellers, which are somehow sightly different from that for the case of IFSs.

\begin{de}
\label{de-6.1}
{\rm The {\it singularity dimension} of $\Lambda$ with respect to $\psi$, written as $\dim_{S^*}\Lambda$, is the unique real value $s$ for which
$$
P(\Lambda, \psi, {\mathcal G}^ s)=0,
$$
where ${\mathcal G}^s=\{g^s_n\}_{n=1}^\infty$ is the sub-additive potential on $\Lambda$ defined by
\begin{equation}
\label{e-gs}
g^s_n(z)=\log \phi^s((D_z \psi^n)^{-1}),\quad z\in \Lambda.
\end{equation}
}
\end{de}

\begin{de}
\label{de-6.2}
{\rm
For an ergodic $\psi$-invariant measure $\mu$ supported on $\Lambda$, the {\it Lyapunov  dimension} of $\mu$ with respect to $\psi$, written as $\dim_{L^*}\mu$, is the unique real value $s$ for which
$$
h_\mu(\psi)+ \mathcal G^s_*(\mu)=0,
$$
where ${\mathcal G}^s=\{g^s_n\}_{n=1}^\infty$ is defined as in \eqref{e-gs} and $\mathcal G^s_*(\mu)=\lim_{n\to \infty}\frac{1}{n} \int g^s_n d\mu$.
}
\end{de}

Before proving Theorems \ref{thm-3}-\ref{thm-4}, we recall some definitions and necessary facts about $C^1$ repellers.

A finite closed cover
 $\{R_1,\cdots,R_{\ell}\}$ of $\Lambda$  is called a {\it Markov partition} of
$\Lambda$ with respect to $\psi$ if:
\begin{itemize}
 \item[(i)] $\overline{\mbox{int}R_i}=R_i$ for each
 $i=1,\ldots,\ell$;

\item[(ii)]  $\mbox{int}R_i \cap \mbox{int}R_j=\emptyset$ for
$i\neq
 j$; and

\item[(iii)]
   each $\psi(R_i)$ is the union of a subfamily of
$\{R_j\}_{j=1}^{\ell}$.
\end{itemize}

 It is well-known that any repeller of an expanding map  has Markov partitions of arbitrary
small diameter (see \cite{Rue}, p.146).  Let $\{R_1,\ldots, R_\ell\}$ be a Markov partition of $\Lambda$ with respect to $\psi$. It is known that this dynamical system induces a subshift space of finite  type $(\Sigma_A,\sigma)$ over the alphabet $\{1,\ldots, \ell\}$,   where $A=(a_{ij})$ is the transfer
matrix of the Markov partition, namely, $a_{ij}=1$ if $\mbox{int}R_i\cap \psi^{-1}(\mbox{int}R_j)\neq \emptyset$ and
$a_{ij}=0$ otherwise \cite{Rue}.   This gives the coding map
$\Pi: \Sigma_A\rightarrow \Lambda$ such that
\begin{equation}
\label{e-pia}
\Pi({\bf i})=\bigcap_{n\geq 1}\psi^{-(n-1)}(R_{i_n}), \qquad \forall \
{\bf i}=(i_1i_2\cdots)\in \Sigma_A,
\end{equation}
and the following diagram
\begin{equation}\label{077}
  \xymatrix{ \Sigma_{A} \ar[r]^{\sigma}  \ar[d]_\Pi   & %
                   \Sigma_{A}\ar[d]^{\Pi}\\ %
\Lambda\ar[r]_{\psi} & \Lambda  }%
\end{equation}
commutes. (Keep in mind that throughout this section, $\Pi$ denotes the coding map for the repeller $\Lambda$ (no longer for the coding map for an IFS used in the previous sections.)

The coding map $\Pi$ is a H\"older continuous
surjection. Moreover there is a positive integer $q$ so that
$\Pi^{-1}(z)$ has at most $q$ elements for each $z\in \Lambda$ (see
\cite{Rue}, p.147).

For $n\geq 1$, define $$\Sigma_{A,n}:=\{i_1\ldots i_n\in \{1,\ldots, \ell\}^n:\; a_{i_ki_{k+1}}=1 \mbox{ for } 1\leq k\leq n-1\}.$$
For any  word $I=i_1\ldots i_n\in \Sigma_{A, n}$, the set $\bigcap_{k=1}^{n}\psi^{-(k-1)}(R_{i_{k}})$
 is called a {\it basic set} and is
denoted by $R_{I}$.

The proof of Theorem \ref{thm-3} is similar to that of Theorem \ref{thm-1}.  We begin with the following lemma, which is a slight variant of Lemma \ref{lem-1}.
\begin{lem}
\label{lem-6.5}
 Let $E\subset U\subset {\pmb M}$, where $E$ is compact and $U$ is open. Let $k\in \{0,1,\ldots, d-1\}$. Then for any non-degenerate $C^1$ map $f:\; U\to {\pmb M}$, there exists $r_0>0$ so that for any $y\in E$, $z\in B(y, r_0)$ and $0<r<r_0$,  the set $f(B(z, r))$ can be covered by
$$
C_{\pmb M}\cdot\frac{\phi^k(D_yf)}{(\alpha_{k+1}(D_yf))^k}
$$
many balls of radius $\alpha_{k+1}(D_yf)r$, where $C_{\pmb M}$ is a positive constant depending on ${\pmb M}$.
\end{lem}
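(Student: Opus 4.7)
The plan is to reduce the manifold statement to the Euclidean version already proved in Lemma \ref{lem-1} by working in normal coordinates via the exponential map.

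First I would set up uniform data. Since $E$ is compact inside the open set $U \subset \pmb M$ and $f$ is a non-degenerate $C^1$ map, the set $f(E)$ is compact. Using the compactness of $E \cup f(E)$ (and a fixed relatively compact neighborhood containing it) and the smoothness of the Riemannian structure, one obtains a uniform $\rho_0 > 0$ and a constant $\kappa \geq 1$ (depending only on $\pmb M$ and on this compact set) such that for every $y \in E$ the exponential map $\exp_y : T_y\pmb M \to \pmb M$ and its inverse are $C^1$-diffeomorphisms on the open metric ball of radius $\rho_0$, and are $\kappa$-bi-Lipschitz there; the same holds for $\exp_{f(y)}$. This uniformity is standard and follows from continuity of the curvature tensor together with compactness.

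Next I would linearize $f$ through charts. Fix $y \in E$, identify $T_y\pmb M$ and $T_{f(y)}\pmb M$ with $\R^d$ by orthonormal frames, and define
$$
\widetilde f_y \; := \; \exp_{f(y)}^{-1} \circ f \circ \exp_y,
$$
a non-degenerate $C^1$ map defined on a Euclidean ball in $\R^d$, with $D_0 \widetilde f_y = D_y f$ under the orthonormal identification; in particular $\widetilde f_y$ and $D_yf$ share the same singular values. The family $\{\widetilde f_y\}_{y \in E}$ has uniformly controlled $C^1$-norm and modulus of continuity of the differential (by compactness of $E$ and continuity of $Df$), so Lemma \ref{lem-1}, applied to each $\widetilde f_y$ with $E$ replaced by the singleton $\{0\} \subset T_y\pmb M \cong \R^d$, yields a single $r_0' > 0$, independent of $y$, such that for all $z' \in B(0, r_0')$ and $0 < r < r_0'$ the image $\widetilde f_y(B(z', r))$ is covered by $(2d)^d \phi^k(D_yf)/\alpha_{k+1}(D_yf)^k$ Euclidean balls of radius $\alpha_{k+1}(D_yf)\, r$.

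Finally I would transfer this covering back to $\pmb M$. Set $r_0 := \min(\rho_0, r_0')/\kappa$. For $z \in B(y, r_0)$ and $0 < r < r_0$ one has $B(z, r) \subset \exp_y(B(\exp_y^{-1}(z), \kappa r))$, so the covering above (applied with radius $\kappa r$ in place of $r$) pushes forward under $\exp_{f(y)}$ to a covering of $f(B(z,r))$ by the same number of sets, each contained in a metric ball in $\pmb M$ of radius $\kappa^2 \alpha_{k+1}(D_yf)\, r$. A standard doubling argument on $\pmb M$ (using the uniform bi-Lipschitz bound $\kappa$, which gives a $d$-dimensional Euclidean-type doubling constant on our compact neighborhood) refines each such ball into at most $C(d,\kappa)$ balls of radius exactly $\alpha_{k+1}(D_yf)\, r$, yielding the desired constant $C_{\pmb M} = (2d)^d \cdot C(d, \kappa)$.

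The main obstacle, and indeed the only place requiring care beyond the Euclidean argument, is establishing the various uniformities in $y$ (uniform injectivity radius $\rho_0$, uniform bi-Lipschitz constant $\kappa$, and uniform applicability of Lemma \ref{lem-1} to the family $\widetilde f_y$); all of these follow from compactness of $E$ and of a neighborhood of $f(E)$, together with the $C^1$-regularity of $f$ and smoothness of $\pmb M$.
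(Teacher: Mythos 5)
Your proof is essentially correct, and fills in what the paper leaves as a routine modification of Lemma~\ref{lem-1} (the paper's ``proof'' is one sentence pointing to Lemma~\ref{lem-1} and Zhang's Corollary). The strategy of conjugating by exponential charts $\widetilde f_y = \exp_{f(y)}^{-1} \circ f \circ \exp_y$, so that $D_0\widetilde f_y = D_yf$ and the singular values match, and then transferring the Euclidean covering back at the cost of a uniform bi-Lipschitz factor $\kappa$ absorbed by a doubling argument, is exactly the standard way to localize Lemma~\ref{lem-1} to a manifold.

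One place you should make slightly more explicit: you apply Lemma~\ref{lem-1} separately to each $\widetilde f_y$ with $E=\{0\}$ and then assert a uniform $r_0'$. As stated, Lemma~\ref{lem-1} produces an $r_0$ depending on the map, so for a $y$-parametrized family one cannot literally invoke the statement; rather one must re-run its proof and observe that the only quantities entering the choice of $r_0$ there are (i) a positive lower bound for $\alpha_d(D_0\widetilde f_y)=\alpha_d(D_yf)$, which is uniform over $y\in E$ by compactness and non-degeneracy, and (ii) a modulus of continuity for $v\mapsto D_v\widetilde f_y$, which is uniform jointly in $(y,v)$ since $D_v\widetilde f_y = D_{f(\exp_y v)}\exp_{f(y)}^{-1}\circ D_{\exp_y v}f\circ D_v\exp_y$ depends continuously on $(y,v)$ ranging over a compact set. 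With that remark inserted the argument is complete and the resulting constant $C_{\pmb M}=(2d)^d\cdot C(d,\kappa)$ depends only on $d$ and on the uniform bi-Lipschitz constant of the exponential charts on the relevant compact neighborhood, which is the intended meaning of ``a positive constant depending on $\pmb M$.''
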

\begin{proof}
This can be done by routinely  modifying the proof of Lemma \ref{lem-1} and using similar arguments of the proof of \cite[Corollary]{Zhang97}.
\end{proof}

Let $\delta>0$ be small enough so that $\psi: B(z, \delta)\to f(B(z, \delta))$ is a diffeomorphism for each $z$ in the $\delta$-neighborhood of $\Lambda$. Suppose that  $\{R_1,\ldots, R_\ell\}$ is a Markov partition of $\Lambda$ with diameter less than $\delta$.

The following result is an analogue of Proposition \ref{pro-2}.
\begin{pro}
\label{pro-5.2}
Let $k\in \{0,1,\ldots, d-1\}$. Set $C_{\pmb M}$ be the constant in Lemma \ref{lem-6.5}. Then there exists $C_1>0$ such that for  all $\mathbf{i}=(i_p)_{p=1}^\infty\in \Sigma_A$ and  $n\in \N$, the basic set $R_{\mathbf{i}|n}$ can be covered by
 $C_1\prod_{p=0}^{n-1}G(\sigma^p \mathbf{i})$  balls of radius $\prod_{p=0}^{n-1}H(\sigma^p \mathbf{i})$,   where
\begin{equation}
\label{e-t9'}
G(\mathbf{i}):=\frac{C_{\pmb M}\phi^k((D_{\Pi \mathbf{i}}\psi)^{-1})}{\alpha_{k+1}((D_{\Pi  \mathbf{i}}\psi)^{-1})^k},\qquad H(\mathbf{i}):=\alpha_{k+1}((D_{\Pi \mathbf{i}} \psi)^{-1}).
\end{equation}
\end{pro}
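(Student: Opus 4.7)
The plan is to carry out an inductive argument that mirrors the proof of Proposition \ref{pro-2}, with the local inverse branches of $\psi$ on the Markov partition $\{R_1,\ldots,R_\ell\}$ playing the role previously played by the contracting IFS maps $f_i$. Since each $R_j$ has diameter less than $\delta$ and $\psi$ is a diffeomorphism on each ball of radius $\delta$ centered near $\Lambda$, the restriction $\psi|_{R_j}$ extends to a diffeomorphism from an open neighborhood of $R_j$ onto its image; let $\psi_j^{-1}$ denote the corresponding inverse branch, which is defined on an open neighborhood of $\psi(R_j)$. The expanding hypothesis $\|(D_z\psi)v\|\geq\lambda\|v\|$ ensures that each $\psi_j^{-1}$ is a uniform contraction with Lipschitz constant at most $\lambda^{-1}<1$.

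The crucial geometric identity is that, for $\mathbf{i}=(i_p)_{p=1}^\infty\in\Sigma_A$ and $y:=\Pi\sigma\mathbf{i}=\psi(\Pi\mathbf{i})$, one has $D_y\psi_{i_1}^{-1}=(D_{\Pi\mathbf{i}}\psi)^{-1}$, whose singular values coincide with those appearing in the definitions of $G(\mathbf{i})$ and $H(\mathbf{i})$. Applying Lemma \ref{lem-6.5} to the finite collection $\{\psi_j^{-1}\}_{j=1}^\ell$ with $E$ a compact set containing all relevant iterated preimages yields a single $r_0>0$ such that for any $\mathbf{i}\in\Sigma_A$, any $z\in B(\Pi\sigma\mathbf{i},r_0)$, and any $0<r<r_0$, the image $\psi_{i_1}^{-1}(B(z,r))$ can be covered by $G(\mathbf{i})$ balls of radius $H(\mathbf{i})\cdot r$.

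The driving recursion is $R_{\mathbf{i}|n}=\psi_{i_1}^{-1}(R_{\sigma\mathbf{i}|(n-1)})$, which follows from the definition of the basic sets together with the injectivity of $\psi$ on $R_{i_1}$ and the Markov property $R_{i_2}\subset\psi(R_{i_1})$ (valid whenever $a_{i_1i_2}=1$). With this in hand, the induction proceeds as in Proposition \ref{pro-2}: choose $n_0$ large enough so that $\lambda^{-n_0}\max\{1,\mathrm{diam}(\Lambda)\}<r_0/2$, and select $C_1$ large enough for the claim to hold directly for all $n\leq n_0$. For the step from $n$ to $n+1$, each inductive covering ball of $R_{\sigma\mathbf{i}|n}$ has radius $<r_0/2$ and center within $r_0$ of $\Pi\sigma\mathbf{i}\in R_{\sigma\mathbf{i}|n}$ (using $\mathrm{diam}(R_{\sigma\mathbf{i}|n})\leq\lambda^{-n}\mathrm{diam}(\Lambda)<r_0/2$, which is immediate from $\psi^n(R_{\sigma\mathbf{i}|n})\subset\Lambda$ and the expanding constant), so Lemma \ref{lem-6.5} applies to each and multiplies the count by $G(\mathbf{i})$ and the radius by $H(\mathbf{i})$.

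The main obstacle is purely organizational: coordinating the uniform constants $r_0$ and $C_1$ across the finitely many branches $\psi_j^{-1}$, and checking that every covering ball arising in the induction remains within the domain of validity of Lemma \ref{lem-6.5}. Both points are handled by the compactness of $\Lambda$, which permits a common choice of $r_0$, and by the uniform exponential contraction $\lambda^{-1}<1$, which forces the radii to shrink geometrically from the base step onward; beyond this bookkeeping, no new idea is required relative to the proof of Proposition \ref{pro-2}.
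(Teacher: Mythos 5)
Your proposal is correct and follows the paper's own proof: construct local inverse branches of $\psi$ adapted to a Markov partition of small diameter, identify $(D_{\Pi\mathbf{i}}\psi)^{-1}$ with the differential of the relevant inverse branch at $\Pi\sigma\mathbf{i}$, and run the inductive covering argument of Proposition \ref{pro-2} via Lemma \ref{lem-6.5}. The only cosmetic differences are that the paper indexes the inverse branches by admissible pairs $(i_1,i_2)$, writing $f_{i_1,i_2}:\widetilde R_{i_2}\to\widetilde R_{i_1 i_2}$, uses the containment $R_{\mathbf{i}|(m+1)}\subset f_{i_1,i_2}(R_{(\sigma\mathbf{i})|m})$ rather than an equality, and derives the diameter bound by iterating the uniform Lipschitz contraction of these branches rather than by your direct $\psi^n$-expansion estimate (which is slightly delicate on a Riemannian manifold since the expansion hypothesis is only stated on $\Lambda$).
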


\begin{rem}
{\rm
The definitions of the functions $G$ and $H$ in the above proposition are slightly different from that in Proposition \ref{pro-2}.
}
\end{rem}

\begin{proof}[Proof of Proposition \ref{pro-5.2}] The proof is adapted from that of Proposition \ref{pro-2}. For the reader's convenience, we provide the full details.

First we construct a local inverse $f_{i,j}$ of $\psi$ for each pair $(i,j)$ with  $ij\in \Sigma_{A,2}$. To do so, notice that $\psi(R_{ij})=R_j$ for each $ij\in \Sigma_{A,2}$.   Since $\psi$ is a diffeomorphism restricted on a small neighborhood of $R_i$, we can find open sets $\widetilde{R}_{ij}$ and $\widetilde{R_j}$ such that  $\widetilde{R}_{ij} \supset R_{ij}$, $\widetilde{R_j}\supset R_j$, $\psi(\widetilde{R}_{ij})=\widetilde{R_j}$ and  $\psi: \widetilde{R}_{ij}\to \widetilde{R_j}$ is diffeomorphic.
Then we take $f_{i,j}: \widetilde{R_j}\to \widetilde{R}_{ij}$ to be the inverse of $\psi: \widetilde{R}_{ij}\to \widetilde{R_j}$, and the construction is done.

For any ${\bf i}=(i_n)_{n=1}^\infty \in \Sigma_A$,  we see that  $\Pi\sigma {\bf i}\in R_{i_2}\subset \widetilde{R_{i_2}}$ and $(\psi\circ f_{i_1, i_2})|_{\widetilde{R_{i_2}}}$ is the identity restricted on $\widetilde{R_{i_2}}$. Since $\psi(\Pi{\bf i})=\Pi\sigma{\bf i}$, it follows that $f_{i_1, i_2}(\Pi\sigma {\bf i})=\Pi{\bf i}$. Differentiating $\psi\circ f_{i_1, i_2}$ at $\Pi\sigma {\bf i}$ and applying the chain rule, we get
$$
(D_{\Pi {\bf i}} \psi)(D_{\Pi\sigma {\bf i}}f_{i_1, i_2})=\mbox{Identity},
$$
so
\begin{equation}\label{e-63}
D_{\Pi\sigma {\bf i}}f_{i_1, i_2}=(D_{\Pi {\bf i}} \psi)^{-1}.
\end{equation}

According to Lemma \ref{lem-6.5}, there exists $r_0>0$ such that for each $ij\in \Sigma_{A,2}$,  $y\in R_j$,  $z\in B(y, r_0)$ and $0<r<r_0$,  the set $f_{i,j}(B(z, r))$ can be covered by
$$
C_{\pmb M}\cdot\frac{\phi^k(D_yf_{i,j})}{(\alpha_{k+1}(D_yf_{i,j}))^k}
$$
many balls of radius $\alpha_{k+1}(D_yf_{i,j})r$.

Since $\psi$ is expanding on $\Lambda$, there exists $\gamma\in (0,1)$ such that
$\sup_{{\bf i}\in \Sigma_A}\alpha_1((D_{\Pi \mathbf{i}} \psi)^{-1})<\gamma$. Then
$\sup_{{\bf i}\in \Sigma_A}H({\bf i})<\gamma$ and
\begin{equation}\label{e-61}
\mbox{diam}(R_{{\bf i}|(n+1)})\leq \gamma \mbox{diam}(R_{(\sigma{\bf i})|n})
\end{equation}
for all ${\bf i}\in \Sigma_A$ and $n\in \N$. Take a large integer $n_0$ so that
\begin{equation}\label{e-62} \gamma^{n_0-1} \max\{1, \mbox{diam}(\Lambda)\}<r_0/2.
\end{equation}
By \eqref{e-61}-\eqref{e-62}, $\mbox{diam}(R_{{\bf i}|n})<r_0/2$ for all ${\bf i}\in \Sigma_A$ and $n\geq n_0$.

 Clearly there exists a large number $C_1$ so that the conclusion of the proposition holds for any positive integer  $n\leq n_0$ and $\mathbf{i}\in \Sigma_A$, i.e.~the set $R_{\mathbf{i}|n}$ can be covered by
 $C_1\prod_{p=0}^{n-1}G(\sigma^p \mathbf{i})$  balls of radius $\prod_{p=0}^{n-1}H(\sigma^p \mathbf{i})$.   Below we show by induction  that this holds for all $n\in \N$ and $\mathbf{i}\in \Sigma_A$.

Suppose  for some $m\geq n_0$, the conclusion of the proposition holds for any positive integer  $n\leq m$ and $\mathbf{i}\in \Sigma_A$. Then for given ${\bf i}=(i_n)_{n=1}^\infty\in \Sigma_A$,
$R_{(\sigma {\bf i})|m}$ can be covered by $C_1\prod_{p=0}^{m-1}G(\sigma^{p+1} \mathbf{i})$  balls of radius $\prod_{p=0}^{m-1}H(\sigma^{p+1} \mathbf{i})$. Let $B_1,\ldots, B_N$ denote these balls. We may assume that $B_j\cap R_{(\sigma {\bf i})|m}\neq \emptyset$ for each $j$. Since  $$\prod_{p=0}^{m-1}H(\sigma^{p+1} \mathbf{i})\leq \gamma^m\leq \gamma^{n_0}<r_0/2$$
and $$d(\Pi\sigma\mathbf{i}, B_j\cap R_{(\sigma {\bf i})|m})\leq {\rm  diam}(R_{(\sigma {\bf i})|m})<r_0/2,$$
so the center of $B_j$ is in $B(\Pi \sigma {\bf i}, r_0)$.   Therefore by Lemma \ref{lem-6.5} and \eqref{e-63},  $f_{i_1, i_2}(B_j)$ can be covered by
$$
C_{\pmb M}\cdot\frac{\phi^k(D_{\Pi \sigma {\bf i}}f_{i_1, i_2})}{(\alpha_{k+1}(D_{\Pi \sigma {\bf i}}f_{i_1, i_2}))^k}=G({\bf i})
$$
balls of radius $$\alpha_{k+1}(D_{\Pi\sigma{\bf i}}f_{i_1,i_2})\cdot\prod_{p=0}^{m-1}H(\sigma^{p+1} \mathbf{i})= H({\bf i})\cdot \prod_{p=0}^{m-1}H(\sigma^{p+1} \mathbf{i})=\prod_{p=0}^{m}H(\sigma^{p} \mathbf{i}).$$  Since $\psi(R_{\mathbf {i}|{(m+1)}})\subset R_{(\sigma {\bf i})|m}$, it follows that
$$R_{\mathbf {i}|{(m+1)}}\subset f_{i_1, i_2} (R_{(\sigma {\bf i})|m})\subset  \bigcup_{j=1}^N f_{i_1, i_2}(B_j),$$
hence
$R_{\mathbf {i}|{(m+1)}}$ can be covered by
 $$G({\bf i})N\leq G({\bf i}) \cdot C_1\prod_{p=0}^{m-1}G(\sigma^{p+1} {\bf i})=C_1\prod_{p=0}^{m}G(\sigma^{p} {\bf i})$$
 balls of radius $\prod_{p=0}^{m}H(\sigma^{p} \mathbf{i})$.  Thus the proposition also holds for  $n=m+1$ and all ${\bf i}\in \Sigma_A$, as desired.
\end{proof}

\begin{pro}
\label{pro-4'}
 Let $k\in \{0, 1,\ldots, d-1\}$.  Let $G, H:\Sigma_A\to \R$ be defined as in \eqref{e-t9'}. Let $t$ be the unique real number so that $$P(\Sigma_A,\sigma,(\log G)+ t(\log H))=0.$$ Then
$
\overline{\dim}_B\Lambda \leq t.
$
\end{pro}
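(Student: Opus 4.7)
The plan is to adapt the proof of Proposition \ref{pro-4} (the IFS analogue) almost verbatim to the repeller setting, using Proposition \ref{pro-5.2} in place of Proposition \ref{pro-2}. Write $g = \log G$ and $h = \log H$ on $\Sigma_A$. Since $\psi$ is expanding on $\Lambda$, we have $\alpha_{k+1}((D_z\psi)^{-1}) < 1$ for all $z \in \Lambda$, so $h$ is continuous and strictly negative on $\Sigma_A$, which is exactly what Proposition \ref{pro-2.4} requires.

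For $0 < r$ sufficiently small, I would define
\[
\mathcal{A}_r := \Bigl\{ i_1\cdots i_n \in \Sigma_A^{*} :\ \sup_{x \in [i_1\cdots i_n]\cap \Sigma_A} S_n h(x) < \log r \leq \sup_{y \in [i_1\cdots i_{n-1}]\cap \Sigma_A} S_{n-1} h(y) \Bigr\},
\]
so that $\{[I] \cap \Sigma_A : I \in \mathcal{A}_r\}$ partitions $\Sigma_A$. Since $\Pi$ is surjective onto $\Lambda$ and commutes with $\sigma$ via the diagram \eqref{077}, we have $\Lambda = \bigcup_{I \in \mathcal{A}_r} R_I$.

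The key step is to invoke Proposition \ref{pro-5.2}: for each $I \in \mathcal{A}_r$ and every $x \in [I]\cap \Sigma_A$, the basic set $R_I$ admits a cover by $C_1 \exp(S_{|I|} g(x))$ balls of radius $\exp(S_{|I|} h(x))$. Passing to the supremum over $x \in [I]\cap \Sigma_A$ on both quantities, we see that $R_I$ can be covered by at most $C_1 \exp\!\bigl(\sup_{y\in [I]\cap \Sigma_A} S_{|I|} g(y)\bigr)$ balls of radius at most $r$ (by definition of $\mathcal{A}_r$). Summing over $I \in \mathcal{A}_r$ yields a cover of $\Lambda$ by
\[
C_1 \sum_{I \in \mathcal{A}_r} \exp\!\Bigl(\sup_{y\in [I]\cap \Sigma_A} S_{|I|} g(y)\Bigr)
\]
balls of radius $r$.

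Finally, I would apply Proposition \ref{pro-2.4} to the subshift $(\Sigma_A,\sigma)$ with the continuous potentials $g$ and $h$: the defining condition $P(\Sigma_A,\sigma, g + t h) = 0$ matches the hypothesis there, so the limit \eqref{e-4''} gives
\[
\overline{\dim}_B \Lambda \leq \limsup_{r \to 0} \frac{\log\bigl(\sum_{I\in\mathcal{A}_r}\sup_{y\in [I]\cap \Sigma_A} \exp(S_{|I|} g(y))\bigr)}{\log(1/r)} = t,
\]
which is the desired conclusion. There is no real obstacle beyond verifying that the coverings and partitions transfer cleanly to the subshift-of-finite-type setting; the relevant combinatorics are handled by Proposition \ref{pro-5.2} and Proposition \ref{pro-2.4}, both already proved in the appropriate generality (Proposition \ref{pro-2.4} is stated for an arbitrary subshift, so it applies to $(\Sigma_A, \sigma)$ without change).
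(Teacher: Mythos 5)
Your proof is correct and follows essentially the same route as the paper's: partition $\Sigma_A$ via the stopping-time family $\mathcal{A}_r$, cover each $R_I$ using Proposition~\ref{pro-5.2}, sum to obtain a cover of $\Lambda$ by balls of radius $r$, and then apply Proposition~\ref{pro-2.4} to compute the exponential growth rate. (Minor remark: when you say ``passing to the supremum on both quantities,'' the cleaner phrasing is what the paper does — fix any $x\in[I]\cap\Sigma_A$, so the ball count is at most $C_1\exp(\sup_y S_{|I|}g(y))$ and the radius $\exp(S_{|I|}h(x))\le\exp(\sup_y S_{|I|}h(y))<r$ hold simultaneously for that same $x$ — but this is what you clearly intend, and your argument is sound.)
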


\begin{proof} Here we use similar arguments as that in the proof of Proposition \ref{pro-4}.
Write $g=\log G$ and $h=\log H$.
Define
$$
r_{\rm min}=\min_{{\bf i}\in \Sigma_A}h({\bf i}),\qquad r_{\rm max}=\max_{{\bf i}\in \Sigma_A}h({\bf i}).
$$
Then $0<r_{\rm \min}\leq  r_{\rm max}<1$. For $0<r<r_{\rm min}$, define
\begin{equation}
\label{e-t12}
\mathcal A_r=\left\{i_1\ldots i_n\in \Sigma^*_A:\; \sup_{x\in [i_1\ldots i_n]\cap \Sigma_A}S_nh(x)<\log r\leq \sup_{y\in [i_1\ldots i_{n-1}]\cap \Sigma_A}S_{n-1}h(y)\right\},
\end{equation}
where $\Sigma_A^*$ denotes the set of all finite words allowed in $\Sigma_A$.
Clearly $\{[I]:\; I\in \mathcal A_r\}$ is a partition of $\Sigma_A$. By Proposition \ref{pro-5.2}, there exists a constant $C_1>0$ such that for each $0<r<r_{\rm min}$,   every $I\in \mathcal A_r$ and $x\in [I]$, $R_I$ can be covered by
$$
C_1\exp(S_{|I|}g(x)) \leq C_1\exp\left(\sup_{y\in [I]}S_{|I|}g(y)\right)
$$
many balls of radius $$\exp(S_{|I|}h(x))\leq \exp\left(\sup_{y\in [I]}S_{|I|}h(y)\right)<r.$$
It follows that $\Lambda$ can be covered by
$$
C_1\sum_{I\in \mathcal A_r}\exp\left(\sup_{y\in [I]}S_{|I|}g(y)\right)
$$
many balls of radius $r$. Hence by Proposition \ref{pro-2.4},
$$
\overline{\dim}_B\Lambda\leq \limsup_{r\to 0}\frac{\sum_{I\in \mathcal A_r}\exp\left(\sup_{y\in [I]}S_{|I|}g(y)\right)}{\log (1/r)}= t.
$$
This completes the proof of the proposition.
\end{proof}

For $n\in \N$, applying Proposition \ref{pro-4'} to the mapping $\psi^n$  instead of $\psi$, we obtain the following.
\begin{pro}
\label{pro-upper'}
Let  $k\in \{0,\ldots, d-1\}$.
Then for each $n\in \N$,
$$
\overline{\dim}_B\Lambda\leq t_n,
$$
where $t_n$ is the unique number for which $P\left(\Sigma_A, \sigma^n, (\log G_n)+ t_n(\log H_n)\right)=0$, and $G_n, H_n$ are continuous functions on $\Sigma_A$ defined by
\begin{equation}
\label{e-6.9}G_n(y):=\frac{C\phi^k((D_{\Pi y}\psi^n)^{-1})}{\alpha_{k+1}((D_{\Pi y}\psi^n)^{-1}))^k},\qquad H_n(y):=\alpha_{k+1}((D_{\Pi y}\psi^n)^{-1})),
\end{equation}
with  $C=C_{\pmb M}$ being the constant in Lemma \ref{lem-6.5}.
\end{pro}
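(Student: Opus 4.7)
The plan is to apply Proposition \ref{pro-4'} directly with $\psi$ replaced by its $n$-th iterate $\psi^n$. First I would verify that all the hypotheses still hold: since $\psi(\Lambda) = \Lambda$ we have $\psi^n(\Lambda) = \Lambda$, the expansion condition (a) upgrades from constant $\lambda > 1$ to $\lambda^n > 1$, and the open neighborhood in (b) can be chosen to be the same $V$ (or a slight shrinking of it), so $\Lambda$ is still a repeller for $\psi^n$.

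Next I would set up the appropriate Markov structure for $\psi^n$. The natural choice is to refine the original Markov partition $\{R_1,\dots,R_\ell\}$ by taking the collection of basic sets of length $n$, namely $\mathcal{R}^{(n)} := \{R_I : I \in \Sigma_{A,n}\}$. One checks in the standard way that $\mathcal{R}^{(n)}$ is a Markov partition of $\Lambda$ with respect to $\psi^n$: the interiors are pairwise disjoint, and $\psi^n(R_I) = R_{i_n}$ for $I = i_1 \dots i_n$ is a union of members of $\mathcal{R}^{(n)}$. The associated symbolic dynamics is a subshift of finite type over the alphabet $\Sigma_{A,n}$, and the map $y = (y_k)_{k=1}^\infty \mapsto (y_1\dots y_n,\, y_{n+1}\dots y_{2n},\, \dots)$ gives a natural homeomorphic conjugacy between $(\Sigma_A, \sigma^n)$ and this block subshift. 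Under this conjugacy, the new coding map from the block subshift to $\Lambda$ coincides with the original $\Pi$, by \eqref{e-pia} and the fact that $R_I = \bigcap_{k=1}^n \psi^{-(k-1)}(R_{i_k})$.

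Now I would apply Proposition \ref{pro-4'} to the system $(\Lambda, \psi^n)$ with the Markov partition $\mathcal{R}^{(n)}$. The proposition yields
$$
\overline{\dim}_B \Lambda \leq s_n,
$$
where $s_n$ is the unique real number for which $P(\Sigma_A, \sigma^n, (\log \widetilde G) + s_n (\log \widetilde H)) = 0$, with
$$
\widetilde G(y) = \frac{C_{\pmb M}\, \phi^k((D_{\Pi y}\psi^n)^{-1})}{\alpha_{k+1}((D_{\Pi y}\psi^n)^{-1})^k}, \qquad \widetilde H(y) = \alpha_{k+1}((D_{\Pi y}\psi^n)^{-1}).
$$
Under the identification of the block shift with $(\Sigma_A, \sigma^n)$ and because the new coding map agrees with $\Pi$, these functions $\widetilde G, \widetilde H$ are literally the $G_n, H_n$ defined in \eqref{e-6.9}. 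Therefore $s_n = t_n$, which gives the desired conclusion.

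The only mildly delicate point is the bookkeeping in step two: verifying that applying Proposition \ref{pro-4'} to $\psi^n$ with the refined partition really produces the pressure equation on $(\Sigma_A, \sigma^n)$ rather than on some awkward recoding. Once one spells out the coding maps and matches $\widetilde G, \widetilde H$ with $G_n, H_n$, the identification is immediate, so I do not anticipate any genuine obstacle — the result is essentially a reindexing of Proposition \ref{pro-4'}.
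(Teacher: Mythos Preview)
Your proposal is correct and follows exactly the approach the paper takes: the paper's proof is the single sentence ``applying Proposition~\ref{pro-4'} to the mapping $\psi^n$ instead of $\psi$,'' and you have simply spelled out the bookkeeping (that $\Lambda$ remains a repeller for $\psi^n$, that the length-$n$ basic sets form a Markov partition, and that the resulting block subshift is conjugate to $(\Sigma_A,\sigma^n)$ with matching coding map) which the paper leaves implicit. One minor slip: $\psi^{n-1}(R_I)=R_{i_n}$, not $\psi^n(R_I)=R_{i_n}$, but your conclusion that $\psi^n(R_I)$ is a union of members of $\mathcal{R}^{(n)}$ is nonetheless correct and the argument goes through.
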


Now we are ready to prove Theorem \ref{thm-3}.

\begin{proof}[Proof of Theorem \ref{thm-3}] We follow the proof of Theorem \ref{thm-1} with slight modifications. Write $s=\dim_{S^*}(\Lambda)$. We may assume that $s<d$; otherwise we have nothing left to prove. Set $k=[s]$.
Let $\mathcal  G^s=\{g_n^s\}_{n=1}^\infty$ be the sub-additive potential on $\Lambda$ defined by
$$g_n^s(z)=\log \phi^s((D_{z} \psi^n)^{-1}).$$
Then $P(\Lambda, \psi, {\mathcal G}^s)=0$ by the definition of $\dim_{S^*}(\Lambda)$. Let $\widehat{\mathcal G}^s:=\{\widehat{g}_n^s\}_{n=1}^\infty$, where $\widehat{g}_n^s\in C(\Sigma_A)$ is  defined by
$$
\widehat{g}^s_n({\bf i})=g_n^s(\Pi{\bf i})=\log \phi^s((D_{\Pi{\bf i}} \psi^n)^{-1}),\quad {\bf i}\in \Sigma_A.
$$
Clearly, $\widehat{\mathcal G}^s$ is a sub-additive potential on $\Sigma_A$ and
$$
(\widehat{\mathcal G}^s)_*(m)=(\widehat{\mathcal G}^s)_*(m\circ \Pi^{-1}), \quad m\in {\mathcal M}(\Sigma_A, \sigma).
$$
  Since the factor map $\Pi:\Sigma_A\to \Lambda$ is onto and finite-to-one, by Lemma \ref{lem-2.3}, $m\to m\circ \Pi^{-1}$ is a surjective map from ${\mathcal M}(\Sigma_A, \sigma)$ to $\mathcal M(\Lambda, \psi)$ and moreover, $h_m(\sigma)=h_{m\circ \Pi^{-1}}(\psi)$ for $m\in \mathcal M (\Sigma_A, \sigma)$.  By the variational principle for the sub-additive pressure (see Theorem \ref{thm:sub-additive-VP}),
     \begin{eqnarray*}
 P\left(\Lambda, \psi, \mathcal{G}^{s}\right)    &=&  \sup \left\{h_{\mu}(\psi)+\left(\mathcal{G}^{s}\right)_{*}(\mu) :\; \mu \in \mathcal{M}(\Lambda, \psi)\right\}\\
     &=& \sup \left\{h_{m \circ \Pi^{-1}}(\psi)+\left(\mathcal{G}^{s}\right)_{*}\left(m \circ \Pi^{-1}\right) :\; m \in \mathcal{M}\left(\Sigma_{A}, \sigma\right)\right\} \\
     &=& \sup \left\{h_{m}(\sigma)+(\widehat{\mathcal{G}}^{s})_{*}(m) :\; m \in \mathcal{M}\left(\Sigma_{A}, \sigma\right)\right\} \\
     &=& P(\Sigma_{A},\sigma,\widehat{\mathcal{G}}^{s}).
  \end{eqnarray*}
It follows that  $P(\Sigma_{A},\sigma,\widehat{\mathcal{G}}^{s})=0$.

 By Proposition 2.2 in \cite{BanCaoHu10},
 $$
 \lim_{n\to \infty}\frac{1}{n}P\left(\Sigma_A, \sigma^n, \widehat{g}^s_n\right)=\inf_{n\geq 1}\frac{1}{n}P\left(\Sigma_A, \sigma^n, \widehat{g}^s_n\right)=P(\Sigma_A, \sigma, \widehat{\mathcal G}^s).
 $$
 Since $P(\Sigma_{A},\sigma,\widehat{\mathcal{G}}^{s})=0$,  the above equalities imply that for each $\epsilon>0$, there exists $N_\epsilon>0$ such that
 \begin{equation}
 \label{e-t8'}
 0\leq P\left(\Sigma_A, \sigma^n, \widehat{g}^s_n\right)\leq n\epsilon \quad\mbox{ for  }\; n>N_\epsilon.
 \end{equation}

For $n\in {\Bbb N}$, let $s_n$ be the unique real number so that $P(\Sigma_A, \sigma^n, v_n^{s_n})=0$, where $v_n^{s_n}$ is a continuous function on $\Sigma_A$ defined by
 \begin{eqnarray*}
 v_n^{s_n}(x)&=& \log G_n(x)+ s_n \log H_n(x)\\
 &=&\log \left( C\phi^k((D_{\Pi x}\psi^n)^{-1})\alpha_{k+1}((D_{\Pi x}\psi^n)^{-1})^{s_n-k}\right)
 \end{eqnarray*}
where $G_n, H_n$ are defined in \eqref{e-6.9} and $C=C_{\pmb M}$. By Proposition \ref{pro-upper'},
$$\overline{\dim}_B\Lambda\leq   s_n.$$
 If $s\geq s_n$ for some $n$, then $\overline{\dim}_B\Lambda\leq s_n\leq s$ and we are done. In what follows, we assume that $s<s_n$ for each $n$. Then for each $x\in \Sigma_A$,
  \begin{eqnarray*}
  \widehat{g}^s_n(x)-v_n^{s_n}(x)&=& -\log C +(s-s_n)\log \alpha_{k+1}((D_{\Pi x}\psi^n)^{-1})\\
  &\geq & -\log C +(s-s_n)\log \alpha_1((D_{\Pi x}\psi^n)^{-1})\\
  &\geq & -\log C+ n(s-s_n) \log \theta,
  \end{eqnarray*}
 where
$$
 \theta:=\max_{x\in \Sigma_A} \alpha_1((D_{\Pi x}\phi)^{-1})<1.
 $$
 Hence
 \begin{eqnarray*} P(\Sigma_A, \sigma^n,\widehat{g}^s_n)&=&P(\Sigma_A, \sigma^n,\widehat{g}^s_n)-P(\Sigma_A, \sigma^n, v_n^{s_n})\\
 &\geq& \inf_{x\in \Sigma_A} \left( \widehat{g}^s_n(x)-v_n^{s_n}(x)\right)\\
 &\geq&  -\log C+ n(s-s_n) \log \theta.
 \end{eqnarray*}
 where in the second inequality, we used \cite[Theorem 9.7(iv)]{Walters82}.
 Combining it with \eqref{e-t8'} yields that for $n\geq N_\epsilon$,
 $$
 n\epsilon\geq -\log  C + n(s-s_n) \log \theta,
 $$
 so
 $$
 s\geq s_n + \frac{\epsilon+ n^{-1} \log C}{\log \theta}\geq \overline{\dim}_B\Lambda+\frac{\epsilon+ n^{-1} \log C}{\log \theta}.
 $$
 Letting $n\to \infty$ and then $\epsilon\to 0$, we obtain $s\geq \overline{\dim}_B\Lambda$, as desired.
\end{proof}

In the remaining part of this section, we prove Theorem \ref{thm-4}. To this end, we need the following two lemmas, which are the analogues of Lemmas \ref{lem-4.1}-\ref{lem-est} for $C^1$ repellers.
\begin{lem}
\label{lem-5.1'}
Let $m$ be an ergodic $\sigma$-invariant Borel probability measure on $\Sigma_A$.  Set
\begin{equation}
\label{e-lambda'}
\lambda_i:=\lim_{n\to \infty} \frac{1}{n}\int \log\left( \alpha_i((D_{\Pi x}\psi^n)^{-1})\right)\; dm(x),\quad i=1,\ldots, d.
\end{equation}
Let $k\in \{0,\ldots, d-1\}$.  Write $u:=\exp(\lambda_{k+1})$. Then for $m$-a.e.~$x\in \Sigma_A$,
\begin{equation}
\label{e-gamma}
\limsup_{n\to \infty} \frac{1}{n}\log N_{u^n}(R_{x|n})\leq (\lambda_1+\cdots+\lambda_k)-k\lambda_{k+1},
\end{equation}
where $N_\delta(E)$ is the smallest integer $N$ for which $E$ can  be covered by $N$ closed balls of radius $\delta$.
\end{lem}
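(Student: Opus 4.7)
The plan is to mimic the argument of Lemma \ref{lem-4.1}, with the $C^1$ IFS iterates $f_{x|n}$ replaced by the inverse branches of $\psi^n$ and with the cylinder sets $\Pi([x|n])$ replaced by the basic sets $R_{x|n}$. First I would introduce, for each $p\in\N$ and $i\in\{1,\ldots,d\}$, the functions on $\Sigma_A$
$$
w_p^{(i)}(x)=\log\phi^i((D_{\Pi x}\psi^p)^{-1}),\qquad v_p^{(i)}(x)=\log\alpha_i((D_{\Pi x}\psi^p)^{-1}).
$$
The chain rule $\psi^{p+q}=\psi^q\circ\psi^p$ combined with the commuting relation $\psi^p\circ\Pi=\Pi\circ\sigma^p$ gives $(D_{\Pi x}\psi^{p+q})^{-1}=(D_{\Pi x}\psi^p)^{-1}(D_{\Pi\sigma^p x}\psi^q)^{-1}$, so submultiplicativity of $\phi^i$ yields that $\{w_p^{(i)}\}_{p\ge 1}$ is a subadditive sequence on $(\Sigma_A,\sigma)$; moreover $|w_p^{(i)}(x)|\le Cp$ since $\psi$ is $C^1$ on a compact neighbourhood of $\Lambda$ and expanding.

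Next I would apply Kingman's subadditive ergodic theorem together with Lemma \ref{lem-2.3.1} to $\{w_p^{(i)}\}$. For $m$-a.e.\ $x$ this yields
$$
\lim_{p\to\infty}\frac{w_p^{(i)}(x)}{p}
=\lim_{p\to\infty}E\!\left(\tfrac{w_p^{(i)}}{p}\,\Big|\,\mathcal C_p\right)\!(x)
=\lambda_1+\cdots+\lambda_i,
$$
where $\mathcal C_p=\{B\in\mathcal B(\Sigma_A):\sigma^{-p}B=B\text{ a.e.}\}$. Writing $v_p^{(i)}=w_p^{(i)}-w_p^{(i-1)}$ gives the corresponding conclusion for $v_p^{(i)}$ with limit $\lambda_i$.

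Now fix $p\in\N$ and apply Proposition \ref{pro-5.2} to the $C^1$ expanding map $\psi^p$ (with Markov partition refined $p$-times, so that its basic sets of depth $n$ are exactly the $R_{x|np}$). This produces a constant $C_1(p)>0$ such that $R_{x|np}$ is covered by $C_1(p)\prod_{i=0}^{n-1}G_p(\sigma^{pi}x)$ balls of radius $\prod_{i=0}^{n-1}H_p(\sigma^{pi}x)$, where
$$
G_p(x)=\frac{C_{\pmb M}\,\phi^k((D_{\Pi x}\psi^p)^{-1})}{\alpha_{k+1}((D_{\Pi x}\psi^p)^{-1})^k},\qquad H_p(x)=\alpha_{k+1}((D_{\Pi x}\psi^p)^{-1}).
$$
Birkhoff's ergodic theorem applied to $\log G_p$ and $\log H_p$ (as functions on the $\sigma^p$-system) then identifies the $n\to\infty$ Cesaro averages with the conditional expectations $E(w_p^{(k)}/p\mid\mathcal C_p)$ and $E(v_p^{(k+1)}/p\mid\mathcal C_p)$.

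Finally, I would fix $\epsilon>0$, choose $p\ge p_0(x)$ large enough that both conditional expectations are within $\epsilon$ of their limits, and conclude as in Lemma \ref{lem-4.1}: for large $n$, the radius product is at most $(u+\epsilon)^{np}$ and the count is at most $\exp\bigl(np(\lambda_1+\cdots+\lambda_k-k\lambda_{k+1}+\epsilon)\bigr)$. A volume comparison (a Euclidean ball of radius $(u+\epsilon)^{np}$ on $\pmb M$ is covered by $C_2(1+\epsilon/u)^{dnp}$ balls of radius $u^{np}$, and for $pn\le m<p(n+1)$ a ball of radius $R$ is covered by $(4R/r)^d$ balls of radius $r$) lets us pass from the subsequence $\{np\}$ to all $n$. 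Letting $\epsilon\to 0$ then yields the stated upper bound on $\limsup_n \frac{1}{n}\log N_{u^n}(R_{x|n})$.

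The main technical obstacle is the local-to-Riemannian passage in the covering step: the geometric counting inequalities used in Lemma \ref{lem-4.1} were stated on $\R^d$, whereas here $R_{x|n}\subset\pmb M$. This is absorbed by using Proposition \ref{pro-5.2} (already proved via Lemma \ref{lem-6.5} in local coordinates), so only the ball-covering constants pick up a dependence on $\pmb M$; no further work is needed beyond being careful about this constant when we re-cover $(u+\epsilon)^{np}$-balls by $u^{np}$-balls.
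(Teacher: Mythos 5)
Your proposal is correct and matches the paper's intended (but omitted) proof: the paper states that the proof of Lemma~\ref{lem-5.1'} is ``essentially identical'' to that of Lemma~\ref{lem-4.1}, and your adaptation — replacing $D_{\Pi\sigma^p x}f_{x|p}$ by $(D_{\Pi x}\psi^p)^{-1}$, using the chain rule with $\psi^p\circ\Pi=\Pi\circ\sigma^p$ for subadditivity of $w_p^{(i)}$, invoking Lemma~\ref{lem-2.3.1} and Birkhoff, applying Proposition~\ref{pro-5.2} to $\psi^p$ with the $p$-refined Markov partition so that depth-$n$ basic sets are $R_{x|np}$, and handling the ball-covering constants on the manifold via Lemma~\ref{lem-6.5} — is exactly the right sequence of substitutions. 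Your remark about the Riemannian covering constant being absorbed into $C_{\pmb M}$ and the recovering step $(u+\epsilon)^{np}\to u^{np}$ is the only genuinely new point of care in passing from $\R^d$ to $\pmb M$, and you handle it correctly.
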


\begin{lem}
\label{lem-5.2}
Let $m$ be a Borel probability measure on $\Sigma_A$. Let $\rho, \epsilon \in (0,1)$. Then for $m$-a.e.~$x=(x_n)_{n=1}^\infty\in \Sigma_A$,
\begin{equation}
\label{e-Borel'}
m\circ\Pi^{-1} (B(\Pi x, 2\rho^n))\geq (1-\epsilon)^n\frac{m([x_1\ldots x_n])}{N_{\rho^n}(R_{x_1\ldots x_n})} \quad \mbox{ for  large enough }n.
\end{equation}
\end{lem}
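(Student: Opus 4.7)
The lemma is the exact analogue of Lemma \ref{lem-est} in the repeller setting, and I would prove it by the same Borel--Cantelli scheme, with the set $\Pi([x|n])$ there replaced by the basic set $R_{x|n}$. The only extra input needed is the inclusion $\Pi([I])\subset R_I$ for every allowed word $I=i_1\ldots i_n\in \Sigma_{A,n}$, which is immediate from the definition \eqref{e-pia} of the coding map: for any ${\bf j}\in [I]\cap \Sigma_A$, the point $\Pi{\bf j}=\bigcap_{k\geq 1}\psi^{-(k-1)}(R_{j_k})$ lies in $\bigcap_{k=1}^n \psi^{-(k-1)}(R_{i_k})=R_I$. Since $R_I$ may be strictly larger than $\Pi([I])$, one has $N_{\rho^n}(R_I)\geq N_{\rho^n}(\Pi([I]))$, so using $R_I$ in the denominator yields a (formally) weaker statement than the IFS analogue, but the same combinatorial argument still carries it through.

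Set
$$\Lambda_n:=\left\{x\in \Sigma_A:\; m\circ\Pi^{-1}(B(\Pi x,2\rho^n))<(1-\epsilon)^n \frac{m([x|n])}{N_{\rho^n}(R_{x|n})}\right\}.$$
By Borel--Cantelli it suffices to prove $\sum_{n=1}^\infty m(\Lambda_n)<\infty$; in fact I will establish the geometric bound $m(\Lambda_n)\leq (1-\epsilon)^n$. Fix $n$ and $I\in \Sigma_{A,n}$. By the definition of $N_{\rho^n}(R_I)$, one can cover $R_I$, and hence its subset $\Pi(\Lambda_n\cap [I])$, by $N_{\rho^n}(R_I)$ closed balls of radius $\rho^n$. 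Discarding those disjoint from $\Pi(\Lambda_n\cap [I])$, one extracts $L\leq N_{\rho^n}(R_I)$ balls $B_1,\ldots,B_L$ together with points $x^{(i)}\in\Lambda_n\cap [I]$ such that $\Pi x^{(i)}\in B_i$; doubling the radius gives $B_i\subset B(\Pi x^{(i)},2\rho^n)$. Applying the defining inequality of $\Lambda_n$ at each $x^{(i)}$ and summing yields
$$m(\Lambda_n\cap [I])\leq m\circ \Pi^{-1}\left(\bigcup_{i=1}^L B(\Pi x^{(i)},2\rho^n)\right)\leq L\cdot (1-\epsilon)^n\frac{m([I])}{N_{\rho^n}(R_I)}\leq (1-\epsilon)^n m([I]).$$
Summing over $I\in \Sigma_{A,n}$ gives $m(\Lambda_n)\leq (1-\epsilon)^n$, which finishes the proof.

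There is no real obstacle here: the $N_{\rho^n}(R_I)$ factor cancels exactly against the number of balls in the chosen cover, so the estimate is essentially combinatorial. The only content specific to the repeller framework is the trivial inclusion $\Pi([I])\subset R_I$; everything else (the Borel--Cantelli reduction, the ``double the radius'' trick, and the per-cylinder geometric decay) is identical to the proof of Lemma \ref{lem-est}. In particular, as the remark after Lemma \ref{lem-est} already indicates, the argument in fact works for any Borel map from $\Sigma_A$ to a metric space and any Borel set containing $\Pi([x|n])$ in place of $R_{x|n}$.
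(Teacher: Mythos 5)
Your proof is correct and matches the paper's approach: the paper omits the argument, remarking only that it is ``essentially identical'' to that of Lemma~\ref{lem-est}, and your Borel--Cantelli scheme is precisely that adaptation, with the covering of $R_I$ by $N_{\rho^n}(R_I)$ balls playing the role the covering of $\Pi([I])$ played in the IFS case. You also correctly isolate the one small new ingredient, the inclusion $\Pi([I])\subset R_I$ coming directly from \eqref{e-pia}, which is all that is needed for the per-cylinder estimate $m(\Lambda_n\cap[I])\leq(1-\epsilon)^n m([I])$ to go through unchanged.
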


The proofs of these two lemmas are essentially identical to that of Lemmas \ref{lem-4.1}-\ref{lem-est}. So we simply omit them.

\begin{proof}[Proof of Theorem \ref{thm-4}] Here we adapt the proof of Theorem \ref{thm-2}.
We may assume that $s:=\dim_{L^*}\mu<d$; otherwise there is nothing left to prove. Since $\Pi:\; \Sigma_A\to \Lambda$ is surjective and finite-to-one,  by Lemma \ref{lem-2.3}, there exists a $\sigma$-invariant ergodic measure $m$ on $\Sigma_A$ so that $m\circ \Pi^{-1}=\mu$ and $h_m(\sigma)=h_\mu(\psi)$.

Set $k=[s]$. Let $\lambda_i$, $i=1,\ldots, d$, be defined as in \eqref{e-lambda'}. Then by Definition \ref{de-6.2},
$$
h_m(\sigma)+\lambda_1+\cdots+\lambda_k+(s-k)\lambda_{k+1}=0.
$$
Let $u=\exp(\lambda_{k+1})$ and $\epsilon\in (0,1)$. Applying Lemma \ref{lem-5.2} (in which we take $\rho=u$) yields that for $m$-a.e.~$x=(x_n)_{n=1}^\infty\in \Sigma_A$,
\begin{equation}
\label{e-Borel'}
\mu (B(\Pi x, 2u^n))\geq (1-\epsilon)^n\frac{m([x_1\ldots x_n])}{N_{u^n}(R_{x_1\ldots x_n})} \quad \mbox{ for  large enough }n.
\end{equation}
It follows that for $m$-a.e.~$x=(x_n)_{n=1}^\infty\in \Sigma_A$,
\begin{eqnarray*}
\overline{d}(\mu, \Pi x)
&=&\limsup_{n\to \infty} \frac{\log\left(\mu (B(\Pi x, 2u^n))\right)}{n\log u} \\
&\leq &\frac{\log(1-\epsilon)}{\log u}+\limsup_{n\to \infty}\left(\frac{\log m([x_1\ldots x_n])}{n \log u} -\frac{\log N_{u^n}(\Pi([x_1\ldots x_n]))}{n\log u}\right)\\
&\leq& \frac{\log(1-\epsilon)-h_m(\sigma)-(\lambda_1+\cdots+\lambda_k)+k\lambda_{k+1}}{\log u}\\
&=& \frac{\log(1-\epsilon)+s\lambda_{k+1}}{\lambda_{k+1}},
\end{eqnarray*}
where in the third inequality, we used the Shannon-McMillan-Breiman theorem (cf.~\cite[Page.~93]{Walters82}) and Lemma \ref{lem-5.1'} (keeping in mind that $\log u=\lambda_{k+1}<0$). Letting $\epsilon\to 0$ yields the desired result. \end{proof}

\begin{rem}
{\rm
 There is  an alternative way to prove Theorem \ref{thm-3} in the case when  ${\pmb M}$ is an open set of $\R^d$.
In such case, we can construct a $C^1$ IFS $\{f_i\}_{i=1}^\ell$ on $\R^d$ so that $\Lambda$ is the projection of a shift invariant set  under the coding map associated with the IFS. Then we can use Theorem \ref{thm-1} to get an upper bound for $\overline{\dim}_B\Lambda$. An additional effort is then required to justify that this upper bound is indeed equal to $\dim_{S^*}\Lambda$.   The details of this approach will be given in a forthcoming survey paper.  
}
\end{rem}

\bigskip
\noindent {\bf Acknowledgement}. The research of Feng was partially supported by a HKRGC GRF grant and the Direct Grant for Research in CUHK. The research of Simon was partially supported by the
grant OTKA K104745.

\appendix

\section{}
In this part, we prove the following.
\begin{lem}
\label{lem-A1}
Let $(\A^\N,\sigma)$ be the one-sided full shift space over a finite alphabet $\A$.  Let $\nu\in \M(\A^\N, \sigma^m)$ for some $m\in \N$. Set $\mu=\frac{1}{m}\sum_{k=0}^{m-1}\nu\circ \sigma^{-k}$. Then $\mu\in \M(\A^{\N}, \sigma)$ and $h_\mu(\sigma)=\frac{1}{m}h_\nu(\sigma^m)$.
\end{lem}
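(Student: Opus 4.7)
The plan is to first verify $\sigma$-invariance of $\mu$, which is immediate:
$$\mu\circ \sigma^{-1}=\frac{1}{m}\sum_{k=0}^{m-1}\nu\circ\sigma^{-(k+1)}=\frac{1}{m}\sum_{k=1}^{m}\nu\circ\sigma^{-k}=\mu,$$
where we used $\nu\circ\sigma^{-m}=\nu$ (from $\sigma^m$-invariance of $\nu$) to shift the index back. For the entropy identity, I would reduce it to the assertion
$$h_\mu(\sigma^m)=h_\nu(\sigma^m),\eqno(\star)$$
since then the standard formula $h_\mu(\sigma^m)=m\,h_\mu(\sigma)$ (valid for any $\sigma$-invariant $\mu$) yields $h_\mu(\sigma)=\frac{1}{m}h_\nu(\sigma^m)$.

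To establish $(\star)$, work with the cylinder partition $\mathcal{P}=\{[a]:a\in\mathcal{A}\}$, a generator for $\sigma$, so that $\mathcal{Q}:=\bigvee_{j=0}^{m-1}\sigma^{-j}\mathcal{P}$ is a generator for $\sigma^m$. Set $\nu_k:=\nu\circ \sigma^{-k}$ for $k=0,\ldots,m-1$. Each $\nu_k$ is $\sigma^m$-invariant (since $\sigma^{-k}$ and $\sigma^{-m}$ commute and $\nu\circ\sigma^{-m}=\nu$), and $\mu=\frac{1}{m}\sum_{k=0}^{m-1}\nu_k$ is a convex combination of $\sigma^m$-invariant measures. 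By the affinity of $\rho\mapsto h_\rho(\sigma^m)$ on $\mathcal{M}(\mathcal{A}^{\mathbb{N}},\sigma^m)$,
$$h_\mu(\sigma^m)=\frac{1}{m}\sum_{k=0}^{m-1}h_{\nu_k}(\sigma^m).$$
Thus it suffices to show $h_{\nu_k}(\sigma^m)=h_\nu(\sigma^m)$ for each $k$.

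The key computation is the following comparison. Using $\bigvee_{j=0}^{n-1}\sigma^{-jm}\mathcal{Q}=\bigvee_{i=0}^{nm-1}\sigma^{-i}\mathcal{P}$ and the identity $H_{\nu_k}(\mathcal{R})=H_\nu(\sigma^{-k}\mathcal{R})$ for any finite partition $\mathcal{R}$, we have
$$H_{\nu_k}\Bigl(\bigvee_{j=0}^{n-1}\sigma^{-jm}\mathcal{Q}\Bigr)=H_\nu\Bigl(\bigvee_{i=k}^{nm+k-1}\sigma^{-i}\mathcal{P}\Bigr).$$
This partition is obtained from $\bigvee_{i=0}^{nm-1}\sigma^{-i}\mathcal{P}$ by adjoining the partition $\bigvee_{i=nm}^{nm+k-1}\sigma^{-i}\mathcal{P}$ and dropping the coarsening by $\bigvee_{i=0}^{k-1}\sigma^{-i}\mathcal{P}$; hence the two entropies differ by at most $2k\log|\mathcal{A}|$. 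Dividing by $n$ and letting $n\to\infty$ gives $h_{\nu_k}(\sigma^m,\mathcal{Q})=h_\nu(\sigma^m,\mathcal{Q})=h_\nu(\sigma^m)$, since $\mathcal{Q}$ generates for $\sigma^m$. Combining this with the affinity displayed above yields $(\star)$, completing the proof.

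The only potentially delicate point is the third step: because $\sigma$ is not invertible on $\mathcal{A}^{\mathbb{N}}$, one cannot simply invoke "isomorphism invariance of entropy" to equate $h_{\nu_k}(\sigma^m)$ with $h_\nu(\sigma^m)$. The remedy is the boundary-term estimate above, which circumvents invertibility by working directly with refinements of $\mathcal{P}$. Everything else is a routine application of the Kolmogorov--Sinai theorem together with affinity of the entropy map on $\sigma^m$-invariant measures.
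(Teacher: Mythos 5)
Your proof is correct and follows essentially the same strategy as the paper's: decompose $\mu$ as a convex combination of $\nu_k=\nu\circ\sigma^{-k}$, invoke affinity of $\rho\mapsto h_\rho(\sigma^m)$, and show $h_{\nu_k}(\sigma^m)=h_\nu(\sigma^m)$ by comparing shifted cylinder partitions with a bounded boundary-term error, then finish with $h_\mu(\sigma^m)=m\,h_\mu(\sigma)$. The only cosmetic difference is that the paper handles a single shift $k=1$ with the estimate $|H_\nu(\mathcal{P}_{nm})-H_\nu(\sigma^{-1}\mathcal{P}_{nm})|\le\log\#\mathcal{A}$ and implicitly iterates, whereas you treat general $k$ directly with the $2k\log|\mathcal{A}|$ bound; both are equivalent.
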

\begin{proof} The lemma might be well-known. However we are not able to find a reference, so for the convenience of the reader we provide a self-contained proof.
The $\sigma$-invariance of $\mu$ follows directly from its definition, and we only need to prove that $h_\mu(\sigma)=\frac{1}{m}h_\nu(\sigma^m)$.

Clearly, $\nu\circ \sigma^{-k}\in \M(\A^\N, \sigma^m)$ for $k=0,\ldots, m-1$. We claim that $h_{\nu\circ \sigma^{-k}}(\sigma^m)=h_{\nu}(\sigma^m)$ for $k=1,\ldots, m-1$. Without loss of generality we prove this in the case when $k=1$.  For $n\in \N$, let ${\mathcal P}_n$ denote the partition of $\A^\N$ consisting of the $n$-th cylinders of $\A^\N$, i.e. ${\mathcal P}_n=\{ [I]:\; I\in \A^n\}$, and  set $\sigma^{-1} {\mathcal P}_n=\{\sigma^{-1}([I]):\; I\in \A^n\}$.  Then it is direct to see that
any element in  ${\mathcal P}_n$ intersects at most $\#\A$ elements in  $\sigma^{-1}{\mathcal P}_n$, and vice versa. Hence $$|H_\nu({\mathcal P}_n)-H_\nu(\sigma^{-1} {\mathcal P}_n|\leq \log \#\A; $$
see e.g. \cite[Lemma 4.6]{FengHu2009}. It follows that
$$
h_{\nu\circ \sigma^{-1}} (\sigma^m)=\lim_{n\to \infty} \frac{1}{n}H_{\nu\circ \sigma^{-1}} ({\mathcal P}_{nm})= \lim_{n\to \infty} \frac{1}{n}H_{\nu} ({\sigma^{-1}\mathcal P}_{nm})=\lim_{n\to \infty} \frac{1}{n}H_{\nu} ({\mathcal P}_{nm})=h_{\nu} (\sigma^m).
$$
This prove the claim.

By the affinity of the measure-theoretic entropy $h_{(\cdot)}(\sigma^m)$ (see \cite[Theorem 8.1]{Walters82}), we have
$$h_\mu(\sigma^m)=\frac{1}{m}\sum_{k=0}^{m-1} h_{\nu\circ \sigma^{-k}}(\sigma^m)=h_\nu(\sigma^m),$$
where the second equality follows from the above claim.   Hence we have  $h_\mu(\sigma)=\frac{1}{m}h_\mu(\sigma^m)=\frac{1}{m}h_\nu(\sigma^m)$.
\end{proof}

\section{Main notation and conventions}
\label{B}
For the reader's convenience, we summarize in Table \ref{table-1} the main notation and typographical conventions used in this paper.
\begin{table}[H]
\centering
\caption{Main notation and conventions}
\vspace{0.05 in}
\begin{footnotesize}
\begin{raggedright}
\begin{tabular}{p{1.5 in} p{4 in} }
\hline \rule{0pt}{3ex}
$\overline{\dim}_B$ & Upper box-counting dimension\\
$\dim_P$ & Packing dimension\\
$\{f_i\}_{i=1}^\ell$ & A $C^1$ IFS (Section \ref{S-1})\\
$(\Sigma, \sigma)$ & One-sided full shift over the alphabet $\{1,\ldots,\ell\}$\\
$\Pi:\; \Sigma\to K$ & Coding map associated with  $\{f_i\}_{i=1}^\ell$ (Section \ref{S-1})\\
$D_xf$ & Differential of $f$ at $x$\\
$\dim_{S}X$ & Singular dimension of $X$ w.r.t.~$\{f_i\}_{i=1}^\ell$ (cf.~Definition \ref{de-1})\\
$\dim_{L}m$ & Lyapunov dimension of $m$ w.r.t.~$\{f_i\}_{i=1}^\ell$ (cf.~Definition \ref{de-2})\\
$P(X,T, \{g_n\}_{n=1}^\infty)$ & Topological pressure of a sub-additive potential $\{g_n\}_{n=1}^\infty$ on a topological dynamical system $(X,T)$ (cf. Definition \ref{e-pressure})\\
$h_\mu(T)$ & Measure-theoretic entropy of $\mu$ w.r.t.~$T$\\
${\mathcal G}_*(\mu)$ & Lyapunov exponent of a sub-additive potential ${\mathcal G}$ w.r.t.~$\mu$ (cf. \eqref{e-N1})\\
$\alpha_i(T)$, $i=1,\ldots, d$ & The $i$-th singular value of $T\in {\Bbb R}^{d\times d}$ (Section \ref{S-2})\\
$\phi^s$ & Singular value function (cf.~\ref{e-singular})\\
$S_ng$ & $g+g\circ T+\cdots+g\circ T^{n-1}$ for $g\in C(X)$\\
$\mathcal G^s=\{g_n^s\}_{n=1}^\infty$ & (cf. \eqref{e-gn})\\
$\mathcal G^s_*(m)$ & $\lim_{n\to \infty} \frac{1}{n}\int g^s_n\; dm$\\
$\lambda_i(m)$, $i=1,\ldots,d$ & The $i$-th Lyapunov exponent of $m$ w.r.t.~$\{f_i\}_{i=1}^\ell$ (cf.~\eqref{e-lambda_1})\\
$N_\delta(E)$ & Smallest number of closed balls of radius $\delta$ required to cover $E$\\
$\dim_{S^*}\Lambda$ & Singularity dimension of $\Lambda$  w.r.t.~$\psi$ (cf. Definition \ref{de-6.1})\\
$\dim_{L^*}\mu$ & Lyapunov dimension of $\mu$  w.r.t.~$\psi$ (cf. Definition \ref{de-6.2})\\
\hline
\end{tabular}
\label{table-1}
\end{raggedright}
\end{footnotesize}
\end{table}

\end{document}